\newtheorem{thm}{Theorem}[section]
\newtheorem{lem}{Lemma}[section]
\newtheorem{pro}{Proposition}[section]
\newtheorem{remark}{Remark}
\theoremstyle{definition}
\newcommand{\tabincell}[2]{\begin{tabular}{@{}#1@{}}#2\end{tabular}}
\begin{document}
\title{Graphs with at most three distance eigenvalues different from $-1$ and $-2$\footnote{This work is supported
by NSFC (Grant nos. 11671344 and 11531011).}}
\author{{\small Xueyi Huang, \ \ Qiongxiang Huang\footnote{
Corresponding author. E-mail: huangqx@xju.edu.cn.}, \ \ Lu Lu\setcounter{footnote}{-1}}\\[2mm]\scriptsize
College of Mathematics and Systems Science,
\scriptsize Xinjiang University, Urumqi, Xinjiang 830046, P. R. China}
\date{}
\maketitle
{\flushleft\large\bf Abstract}
Let $G$ be a connected graph on $n$ vertices, and let $D(G)$ be the distance matrix of $G$. Let $\partial_1(G)\ge\partial_2(G)\ge\cdots\ge\partial_n(G)$ denote the eigenvalues of $D(G)$. In this paper, we characterize all connected graphs with $\partial_{3}(G)\leq -1$ and $\partial_{n-1}(G)\geq -2$. By the way, we determine all connected graphs with at most three distance eigenvalues different from $-1$ and $-2$.
\vspace{0.1cm}
\begin{flushleft}
\textbf{Keywords:} distance matrix; the third largest distance eigenvalue; the second least distance eigenvalue.
\end{flushleft}
\textbf{AMS Classification:} 05C50.

\section{Introduction}\label{s-1}
Let $G$ be a connected simple graph with vertex set $V(G)=\{v_1,v_2,\ldots,v_n\}$.  Denote by $d_G(v_i,v_j)$ the length of the shortest path connecting $v_i$ and $v_j$ in $G$. The \emph{distance} between $v\in V(G)$ and $H$, a connected subgraph of $G$,  is defined to be  $d(v,H)=\min\{d_G(v,w)\mid w\in V(H)\}$.  Furthermore,  we define the \emph{diameter} and \emph{distance matrix} of $G$  as $d(G)=\max\{d_G(v_i,v_j)\mid v_i,v_j\in V(G)\}$ and $D(G)=[d_G(v_i,v_j)]_{n\times n}$, respectively. Then the characteristic polynomial $\Phi_G(x)=\det(xI-D(G))$ of $D(G)$ is also called the \emph{distance polynomial} ($D$-\emph{polynomial} for short) of $G$.  

Since $D(G)$ is a real and symmetric, its eigenvalues can be conveniently denoted and arranged as $\partial_1(G)\ge\partial_2(G)\ge\cdots\ge\partial_n(G)$.  These eigenvalues are also called the  \emph{distance eigenvalues} ($D$-\emph{eigenvalues} for short) of $G$. The \emph{distance spectrum} ($D$-\emph{spectrum} for short)  of $G$, denoted by $\mathrm{Spec}_D(G)$, is the multiset of $D$-eigenvalues of $G$. If $\alpha_1>\alpha_2>\cdots>\alpha_s$ denote all the distinct $D$-eigenvalues (with multiplicities $m_1,m_2,\ldots,m_s$, respectively) of $G$ , then the $D$-spectrum of $G$ can be written as $\mathrm{Spec}_D(G)=\{[\alpha_1]^{m_1},\ldots,[\alpha_s]^{m_s}\}$. Two connected graphs  are said to be \emph{distance cospectral} ($D$-\emph{cospectral} for short) if they share the same $D$-spectrum, and the graph $G$ is called \textit{determined by its $D$-spectrum} (\emph{DDS} for short)  if any connected graph distance cospectral with $G$ must be isomorphic to it. 

Throughout this paper, we denote by $G^c$ the \emph{complement} of $G$, $tG$ the disjoint union of $t$ copies of $G$, $N_G(v)$ the \emph{neighborhood} of $v\in V(G)$, $G[X]$ the induced subgraph of $G$ on $X\subseteq V(G)$, and $D_G(X)$ the principal submatrix of $D(G)$ corresponding to $G[X]$. Also, we denote by $P_n$ the path of order $n$, $K_n$ the complete graph on $n$ vertices, and $K_{n_1,\ldots,n_k}$ the complete $k$-partite graph with parts of order $n_1,\ldots,n_k$, respectively.

\begin{figure}[t]
\begin{center}
\unitlength 2mm 
\linethickness{0.4pt}
\ifx\plotpoint\undefined\newsavebox{\plotpoint}\fi 
\begin{picture}(20,8)(0,0)
\put(1,7){\line(0,-1){6}}
\put(1,7){\circle*{1.5}}
\put(1,1){\circle*{1.5}}
\put(7,7){\circle*{1.5}}
\put(7,1){\circle*{1.5}}
\put(13,7){\circle*{1.5}}
\put(13,1){\circle*{1.5}}
\put(19,7){\circle*{1.5}}
\put(19,1){\circle*{1.5}}
\put(19,7){\line(0,-1){6}}
\put(19,1){\line(-1,1){6}}
\put(13,1){\line(1,1){6}}
\put(13,7){\line(1,0){6}}
\put(13,1){\line(1,0){6}}
\put(13,1){\line(-1,0){12}}
\put(1,7){\line(1,0){12}}
\put(1,7){\line(1,-1){6}}
\put(1,1){\line(1,1){6}}
\put(7,7){\line(1,-1){6}}
\put(13,7){\line(-1,-1){6}}
\end{picture}
\caption{\small The graph $P_4[K_{2},K_{2}^c,K_{2}^c,K_{2}]$.}
\label{Figure-1}
\end{center}
\end{figure}
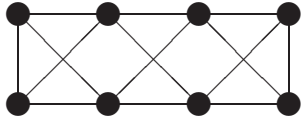

For a connected graph $G$ whose vertices are labeled as $v_1,v_2,\ldots,v_n$, and a sequence of graphs $H_1,H_2,\ldots,H_n$, the corresponding \emph{generalized lexicographic product} $G[H_1,\ldots,H_n]$ is defined as the graph obtained from $G$ by replacing  $v_i$ with the graph $H_i$ for $1\leq i\leq n$, and connecting all edges between $H_i$ and $H_j$ if $v_i$ is adjacent to $v_j$ for $1\leq i\neq j\leq n$. For example, Fig. \ref{Figure-1} illustrates the graph $P_4[K_{2},K_{2}^c,K_{2}^c,K_{2}]$.

Connected graphs whose $D$-eigenvalues possess special  properties arouse  some interests  in recent years. Lin et al. \cite{Lin2} (see also Yu \cite{Yu}) proved that $\partial_{n}(G)=-2$ if and only if $G$ is a complete multipartite graph, and conjectured that complete multipartite graphs are DDS.  Recently, Jin and Zhang \cite{Jin} confirmed the conjecture. Lin et al. \cite{Lin1,Lin} characterized all connected graphs with $\partial_n(G)\geq-1-\sqrt{2}$ and   $\partial_{n-1}(G)=-1$, respectively, and showed that these graphs are DDS. Li and Meng  \cite{Li} extended the result to connected graphs with $\partial_n(G)\geq-\frac{1+\sqrt{17}}{2}$. Xing and Zhou \cite{Xing} determined all connected graphs with $\partial_2(G)<-2+\sqrt{2}$, and Liu et al. \cite{Liu} generalized the result to $\partial_2(G)\leq\frac{17-\sqrt{329}}{2}$ and  proved that these graphs are DDS. Very recently, Lu et al. \cite{Lu} characterized all connected graphs with $\partial_3(G)\leq -1$ and $\partial_{n}(G)\geq -3$. It is worth noticing that most of the graphs mentioned above  are of diameter $2$.

On the other hand, in the past two decades, connected graphs with few distinct eigenvalues have been investigated for several graph matrices since such graphs always have pretty combinatorial properties. For some recent works on this topic, we refer the reader to \cite{Cheng,Cioaba,Cioaba1,Huang,Mohammadian,Rowlinson}. With regard to distance matrix, Koolen et al. \cite{Koolen} determined all connected graphs with three distinct $D$-eigenvalues of which two are simple; Lu et al. \cite{Lu} determined all connected graphs with exactly two $D$-eigenvalues different from $-1$ and $-3$ (which are also DDS);  Alazemi et al. \cite{Alazemi} characterized distance-regular graphs with diameter three having exactly three distinct $D$-eigenvalues, and also bipartite distance-regular graphs with diameter four having three distinct $D$-eigenvalues.

In this paper, we completely characterize the connected graphs with $\partial_{3}(G)\leq -1$ and $\partial_{n-1}(G)\geq -2$  (the diameter of these graphs could be $2$ or $3$). As a by-product, we also determine all connected graphs with at most three $D$-eigenvalues different from $-1$ and $-2$, which gives new classes of graphs with few distinct $D$-eigenvalues.

\section{Main tools}\label{s-2}
First of all, we present some results about the bounds of $\partial_{n}(G)$ and $\partial_{n-1}(G)$, which are useful in the subsequent sections.
\begin{lem}[\cite{Lin}]\label{Lemma-2-1}
Let $G$ be a connected graph on $n$ vertices. Then $\partial_{n}(G)\leq -d(G)$ where $d(G)$ is the diameter of $G$ and the equality holds if and only if $G$ is a complete multipartite graph.
\end{lem}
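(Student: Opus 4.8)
The plan is to obtain the inequality from Cauchy interlacing and then extract the equality case by first bounding the diameter and afterwards invoking a spectral characterization of complete multipartite graphs. For the bound I would choose vertices $u,w$ with $d_G(u,w)=d(G)=:d$ and look at the principal submatrix of $D(G)$ on $\{u,w\}$, namely $\bigl(\begin{smallmatrix}0&d\\ d&0\end{smallmatrix}\bigr)$, whose least eigenvalue is $-d$. Since the least eigenvalue of a real symmetric matrix is at most the least eigenvalue of any principal submatrix (Cauchy interlacing), this immediately gives $\partial_n(G)\le -d$. The same interlacing idea should drive the equality analysis: whenever $G$ is ``too far'' from being complete multipartite, I would try to exhibit a small principal submatrix whose least eigenvalue is strictly below $-d$.

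Next I would show that equality forces $d\le 2$. Suppose $\partial_n(G)=-d$ with $d\ge 3$. Picking a shortest $u$--$w$ path and letting $v$ be the neighbour of $u$ on it, the triple $\{u,v,w\}$ realises distances $d_G(u,v)=1$, $d_G(v,w)=d-1$, $d_G(u,w)=d$, so $M:=D_G(\{u,v,w\})$ is an explicit $3\times 3$ matrix whose characteristic polynomial I would compute as
\[
p(\lambda)=\lambda^{3}-(2d^{2}-2d+2)\lambda-2d(d-1).
\]
Evaluating gives $p(-d)=d(d-2)^{2}>0$ for $d\ge 3$, while $p(\lambda)\to-\infty$ as $\lambda\to-\infty$; hence $M$ has an eigenvalue strictly less than $-d$, and interlacing yields $\partial_n(G)<-d$, contradicting equality. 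Thus equality forces $d\le 2$.

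Finally I would settle the two remaining cases. If $d=1$ then $G=K_n$, which is complete multipartite and satisfies $\partial_n=-1=-d$. If $d=2$, equality reads $\partial_n(G)=-2$; writing $D(G)=2(J-I)-A$ with $A$ the adjacency matrix and $J$ the all-ones matrix, the condition $D(G)+2I=2J-A\succeq 0$ restricted to vectors orthogonal to $\mathbf 1$ forces $x^{\top}Ax\le 0$ there, i.e.\ $\lambda_2(A)\le 0$, and the classical characterization of connected graphs with non-positive second adjacency eigenvalue then identifies $G$ as complete multipartite. For the converse, a complete multipartite graph has two non-adjacent twins $a,b$ in a common part, and $e_a-e_b$ is a $(-2)$-eigenvector of $D(G)$, so $-2$ is a $D$-eigenvalue; combined with $\partial_n\le -2$ and a short check that $2J-A$ is positive semidefinite, this gives $\partial_n=-2=-d$.

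I expect the main obstacle to be precisely this $d=2$ step: converting the spectral condition $\partial_n(G)=-2$ into the combinatorial conclusion that $G$ is complete multipartite. Whereas the inequality and the reduction to $d\le 2$ are clean consequences of interlacing applied to $2\times 2$ and $3\times 3$ submatrices, the last step rests on the $\lambda_2(A)\le 0$ characterization (equivalently, on the forbidden induced subgraph $K_1\cup K_2$), which is where the real content of the equality statement lies.
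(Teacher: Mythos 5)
The paper offers no proof of this lemma: it is quoted directly from \cite{Lin}, and its $d=2$ equality case is essentially the separately cited result of Lin--Hong--Wang--Shu (Lemma \ref{Lemma-2-2}), so there is nothing in-text to compare against. Your argument is correct and follows the standard route: interlacing against the $2\times 2$ submatrix $\bigl(\begin{smallmatrix}0&d\\ d&0\end{smallmatrix}\bigr)$ gives the bound; your $3\times 3$ computation checks out ($p(\lambda)=\lambda^{3}-(2d^{2}-2d+2)\lambda-2d(d-1)$ and $p(-d)=d(d-2)^{2}>0$ for $d\ge 3$), so equality forces $d\le 2$; and $D+2I=2J-A$ correctly converts the $d=2$ case into $\lambda_{2}(A)\le 0$. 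The one caveat: the step from $\lambda_{2}(A)\le 0$ to ``complete multipartite'' is not obtainable by interlacing against the induced subgraph $K_{1}\cup K_{2}$, since $\lambda_{2}(K_{1}\cup K_{2})=0$; you genuinely need Smith's classical theorem that a connected graph with $\lambda_{2}(A)\le 0$ is complete multipartite (or, inside this paper's framework, you could simply invoke Lemma \ref{Lemma-2-2} for the $d=2$ case). Granting that classical result, the proof is complete, including the converse via the decomposition $2J-A=J+B$ with $B$ block-diagonal and positive semidefinite.
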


In particular, for graphs of diameter $2$, we have

\begin{lem}[\cite{Lin2}]\label{Lemma-2-2}
Let $G$ be a connected graph on $n$ vertices. Then $\partial_{n}(G)=-2$ with multiplicity $n-k$ if and only if $G$ is a complete $k$-partite graph for $2\leq k \leq n-1$.
\end{lem}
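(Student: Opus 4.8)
The plan is to prove both implications, handling the ``if'' direction by exploiting the very explicit structure of the distance matrix of a complete multipartite graph, and the ``only if'' direction by appealing to Lemma~\ref{Lemma-2-1}.

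For the ``if'' direction, suppose $G=K_{n_1,\ldots,n_k}$ with $2\le k\le n-1$, so that at least one part has size at least $2$ and hence $d(G)=2$. I would first record that within a part any two distinct vertices are at distance $2$ while across parts any two vertices are at distance $1$, which lets me write
\[
D(G)=(J-I)+S,\qquad S=\bigoplus_{i=1}^{k}\bigl(J_{n_i}-I_{n_i}\bigr),
\]
where $J$ is the all-ones matrix and the blocks of $S$ correspond to the parts. The key observation is then that
\[
D(G)+2I=J+(S+I)=J+\bigoplus_{i=1}^{k}J_{n_i}
\]
is a sum of positive semidefinite matrices, whence $\partial_n(G)\ge-2$; since Lemma~\ref{Lemma-2-1} gives $\partial_n(G)\le-d(G)=-2$, we conclude $\partial_n(G)=-2$.

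To pin down the multiplicity exactly --- which I expect to be the only delicate point --- I would compute the kernel of $D(G)+2I$. Because $J$ and $S+I$ are both positive semidefinite, a vector $y$ lies in $\ker\bigl(D(G)+2I\bigr)$ if and only if $Jy=0$ and $(S+I)y=0$; the second condition says precisely that $y$ sums to zero on each part, and this already forces $\mathbf 1^{\top}y=0$, i.e.\ $Jy=0$. Hence the $(-2)$-eigenspace is exactly the space of vectors summing to zero on every part, of dimension $\sum_{i=1}^{k}(n_i-1)=n-k$. This is the heart of the argument: the positive-semidefinite splitting turns an otherwise awkward multiplicity count into a transparent one and rules out any ``extra'' $-2$ eigenvectors coming from the vectors constant on each part.

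For the ``only if'' direction, assume $G$ is connected with $\partial_n(G)=-2$ of multiplicity $n-k$. Since $D(K_n)=J-I$ has least eigenvalue $-1$, the graph $G$ is not complete, so $d(G)\ge2$; on the other hand $\partial_n(G)=-2\le-d(G)$ by Lemma~\ref{Lemma-2-1} gives $d(G)\le2$, so $d(G)=2$ and equality holds in Lemma~\ref{Lemma-2-1}. Therefore $G$ is complete multipartite, say with $k'$ parts, and by the ``if'' direction its $(-2)$-eigenvalue has multiplicity $n-k'$. Matching this against the hypothesis yields $k'=k$, so $G$ is complete $k$-partite, completing the proof.
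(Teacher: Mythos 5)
The paper does not prove this lemma at all: it is imported verbatim from \cite{Lin2} as a known result, so there is no internal proof to compare against. Judged on its own terms, your argument is correct and complete. The decomposition $D(G)=(J-I)+S$ with $S=\bigoplus_i(J_{n_i}-I_{n_i})$ is right, $D(G)+2I=J+\bigoplus_i J_{n_i}$ is a sum of positive semidefinite matrices, and the standard fact that for positive semidefinite $A,B$ one has $\ker(A+B)=\ker A\cap\ker B$ correctly identifies the $(-2)$-eigenspace as the vectors summing to zero on each part, of dimension $n-k$; the converse via the equality case of Lemma~\ref{Lemma-2-1} (after excluding $K_n$, whose least distance eigenvalue is $-1$) is also sound, and matching multiplicities pins down the number of parts. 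This is essentially the argument one finds in the literature for this result (for a diameter-$2$ graph $D=2(J-I)-A$, so $D+2I=J+(\bar A+I)$ where $\bar A$ is the adjacency matrix of the complement, which for a complete multipartite graph is exactly your $\bigoplus_i J_{n_i}$), so your proof is a legitimate self-contained substitute for the citation rather than a divergence from anything in the paper.
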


The following lemma determines all connected graphs with $\partial_{n-1}(G) \leq -1$.

\begin{lem}[\cite{Lin1}]\label{Lemma-2-3}
Let $G$ be a connected graph on $n$ vertices. If $n \geq 4$, then $\partial_{n-1}(G) \leq -1$ and the equality holds if and only if $G=K_r\vee(K_s\cup K_t)$  with $r\geq 1$.
\end{lem}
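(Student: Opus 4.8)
The plan is to recast both assertions in terms of the inertia of $M:=D(G)+I_n$, where I write $J_n$ for the all-ones matrix and $n_-(\cdot)$ for the number of negative eigenvalues. Its diagonal entries equal $1$ and its off-diagonal entries are $d_G(v_i,v_j)\ge 1$, and for every $i\ne j$ one has $(e_i-e_j)^{\top}M(e_i-e_j)=2\bigl(1-d_G(v_i,v_j)\bigr)\le 0$. Since the eigenvalues of $M$ are exactly $\partial_i(G)+1$, the bound $\partial_{n-1}(G)\le -1$ is equivalent to $M$ having at least two non-positive eigenvalues. The main device is the inertial form of Cauchy interlacing: for any principal submatrix $B$ of $M$ one has $n_-(M)\ge n_-(B)$. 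The key structural fact I would exploit is that when $d(G)=2$ one has $M=J_n+A(G^c)$, so the problem transfers to the adjacency structure of the complement $G^c$.

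First I would settle the non-extremal diameters. If $d(G)\ge 3$, choose a geodesic on four vertices; the mutual distances inside $G$ are precisely those of $P_4$, so $M$ contains $D(P_4)+I_4$ as a principal submatrix. A direct computation shows $D(P_4)+I_4$ has two negative eigenvalues, whence $n_-(M)\ge 2$ and $\partial_{n-1}(G)<-1$; thus no graph of diameter at least $3$ can be extremal. The case $d(G)=1$ is $G=K_n$, where $M=J_n$ gives $\partial_{n-1}(G)=-1$ at once (the degenerate member of the family).

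The heart of the argument is $d(G)=2$. Here I would show that if $G^c$ contains, as an induced subgraph, any of $K_3$, $2K_2$ or $P_4$, then $n_-(M)\ge 2$ and so $\partial_{n-1}(G)<-1$. Concretely, three pairwise non-adjacent vertices of $G$ (a triangle in $G^c$) span the submatrix $2J_3-I_3$ with spectrum $\{5,-1,-1\}$; an induced $2K_2$ or $P_4$ in $G^c$ translates into an explicit $4\times 4$ principal submatrix of $M$ whose inertia I would verify to have exactly two negative eigenvalues. Consequently an extremal $G$ must have $G^c$ free of $K_3$, $2K_2$ and $P_4$. A short structural step then pins this down: $2K_2$-freeness forces at most one non-trivial component, and a connected $\{K_3,P_4\}$-free graph is a triangle-free cograph, hence complete bipartite; therefore $G^c=rK_1\cup K_{s,t}$, i.e. $G=K_r\vee(K_s\cup K_t)$, with $r\ge 1$ coming from the connectedness of $G$.

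Finally, for the converse I would check that every $G=K_r\vee(K_s\cup K_t)$ attains equality. The partition into the three parts is equitable, and the within-part zero-sum vectors are eigenvectors of $D(G)$ for $-1$, so $-1$ is a $D$-eigenvalue of multiplicity $n-3\ge 1$ and hence $0$ is an eigenvalue of $M$. On the other hand $M=J_n+A(G^c)$ with $A(G^c)=A(K_{s,t})\oplus 0_r$, whose only negative eigenvalue is $-\sqrt{st}$; since $J_n$ is positive semidefinite it cannot increase the number of negative eigenvalues, so $n_-(M)\le 1$, and equality holds because $G\ne K_n$. Together these give $\lambda_{n-1}(M)=0$, that is $\partial_{n-1}(G)=-1$. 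I expect the principal obstacle to be the diameter-$2$ casework: confirming that each forbidden configuration in $G^c$ yields exactly two negative eigenvalues, and arguing cleanly that a $\{K_3,2K_2,P_4\}$-free complement must be $rK_1\cup K_{s,t}$.
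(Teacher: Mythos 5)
The paper does not prove this lemma at all --- it is imported verbatim from \cite{Lin1} as a known tool --- so there is no in-paper argument to compare against; what can be said is that your blind proof is correct and self-contained. The reduction to the inertia of $M=D(G)+I$ is sound, and each deferred computation checks out: for $d(G)\ge 3$ a geodesic quadruple gives the principal submatrix $D(P_4)+I_4$ with eigenvalues $\approx 6.162,\,0.414,\,-0.162,\,-2.414$ (two negative); for $d(G)=2$ one has $M=J_n+A(G^c)$, and an induced $K_3$, $2K_2$ or $P_4$ in $G^c$ yields principal submatrices with spectra $\{5,-1,-1\}$, $\{5,1,-1,-1\}$ and $\bigl\{\tfrac{5\pm\sqrt{37}}{2},\tfrac{-1\pm\sqrt{5}}{2}\bigr\}$ respectively, each with exactly two negative eigenvalues, so interlacing forces $\partial_{n-1}(G)<-1$. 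The structural step is also right: $2K_2$-freeness leaves at most one nontrivial component of $G^c$, and a connected $\{K_3,P_4\}$-free graph is a complete bipartite graph (a triangle-free cograph), whence $G^c=rK_1\cup K_{s,t}$ and $G=K_r\vee(K_s\cup K_t)$ with $r\ge1$ by connectedness. For the converse, $n_-(J_n+A(G^c))\le n_-(A(K_{s,t})\oplus 0_r)=1$ by Weyl since $J_n\succeq 0$, equality holds because some pair is at distance $2$, and $-1\in\mathrm{Spec}_D(G)$ follows from the paper's Lemma~\ref{Lemma-2-7} applied to whichever of the three parts has at least two vertices (one must, as $n\ge4$); together these give $\lambda_{n-1}(M)=0$, i.e.\ $\partial_{n-1}(G)=-1$. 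The only nitpick is that the opening observation about the quadratic form $(e_i-e_j)^{\top}M(e_i-e_j)\le 0$ is never actually used and could be dropped.
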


A \emph{Hermitian matrix} is a square matrix with complex entries that is equal to
its own conjugate transpose. Note that all the eigenvalues of a Hermitian matrix are real, and any real symmetric matrix is always a Hermitian matrix. The following  result is well known.

\begin{lem}[Cauchy Interlace Theorem]\label{Lemma-2-4}
Let $A$ be a Hermitian matrix of order $n$, and $B$ a principal submatrix of $A$ of order $m$. If $\lambda_1(A)\geq\lambda_2(A)\geq\cdots\geq\lambda_n(A)$ are the eigenvalues of $A$ and $\mu_1(B)\geq\mu_2(B)\geq\cdots\geq\mu_m(B)$ the eigenvalues of $B$, then $\lambda_i(A)\geq \mu_i(B)\geq\lambda_{n-m+i}(A)$ for $i=1,\ldots,m$.
\end{lem}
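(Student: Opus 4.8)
The plan is to derive the interlacing inequalities from the Courant--Fischer variational characterization of the eigenvalues of a Hermitian matrix, which is the most transparent route and handles both halves of the inequality uniformly. First I would reduce to a convenient normal form: since permuting the rows and columns of $A$ by the same permutation is a unitary (indeed permutation) similarity, it preserves the spectrum of $A$, and it can bring the chosen principal submatrix $B$ into the leading $m\times m$ block. Hence there is no loss in assuming $B=D_A(\{1,\dots,m\})$. Writing $P=[e_1,\dots,e_m]$ for the $n\times m$ matrix whose columns are the first $m$ standard basis vectors of $\mathbb{C}^n$, one has $B=P^{*}AP$, $P^{*}P=I_m$, and for every nonzero $x\in\mathbb{C}^m$,
\[
\frac{x^{*}Bx}{x^{*}x}=\frac{(Px)^{*}A(Px)}{(Px)^{*}(Px)},
\]
so the Rayleigh quotient of $B$ at $x$ equals the Rayleigh quotient of $A$ at $Px$, and $P$ identifies $\mathbb{C}^m$ isometrically with the coordinate subspace $W=\mathrm{span}(e_1,\dots,e_m)\subseteq\mathbb{C}^n$.

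Next I would invoke Courant--Fischer. For the upper bounds I use the max--min form,
\[
\mu_i(B)=\max_{\dim S=i}\ \min_{0\neq x\in S}\frac{x^{*}Bx}{x^{*}x},\qquad
\lambda_i(A)=\max_{\dim U=i}\ \min_{0\neq y\in U}\frac{y^{*}Ay}{y^{*}y},
\]
where $S$ ranges over $i$-dimensional subspaces of $\mathbb{C}^m$ and $U$ over $i$-dimensional subspaces of $\mathbb{C}^n$. Each candidate $S$ maps under $P$ to an $i$-dimensional subspace $PS\subseteq W\subseteq\mathbb{C}^n$ on which the two Rayleigh quotients agree; thus the maximization defining $\mu_i(B)$ runs over a subfamily of the subspaces allowed in the maximization defining $\lambda_i(A)$, which yields $\mu_i(B)\le\lambda_i(A)$. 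For the lower bounds I apply the same comparison to the pair $(-A,-B)$: since the eigenvalues of $-A$ are $-\lambda_{n-j+1}(A)$ and those of $-B$ are $-\mu_{m-j+1}(B)$, the inequality $\mu_i(-B)\le\lambda_i(-A)$ translates, after reindexing, into $\mu_i(B)\ge\lambda_{n-m+i}(A)$. (Equivalently, one can run the min--max form of Courant--Fischer directly, comparing the $(m-i+1)$-dimensional subspaces of $W$ with those of $\mathbb{C}^n$.)

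The two comparisons together give $\lambda_i(A)\ge\mu_i(B)\ge\lambda_{n-m+i}(A)$ for $i=1,\dots,m$, as required. There is no serious obstacle here, since this is a classical fact; the only point demanding care is bookkeeping the indices in the lower bound---matching the dimension $n-(n-m+i)+1=m-i+1$ of the subspaces in the min--max characterization of $\lambda_{n-m+i}(A)$ with the dimension $m-i+1$ appearing for $\mu_i(B)$, and correctly reversing the order of eigenvalues when passing from $A,B$ to $-A,-B$. A self-contained alternative, if one prefers to avoid quoting Courant--Fischer, is to prove first the one-step case $m=n-1$ (e.g.\ by examining the rational function $\det(xI-A)/\det(xI-B)$, whose poles are the $\mu_k$, whose zeros are the eigenvalues of $A$, and whose residues at the poles all have the same sign, forcing zeros and poles to interlace) and then iterate, deleting one index at a time; but the variational argument above is shorter and applies to all $m$ at once.
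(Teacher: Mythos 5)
Your proof is correct. Note, however, that the paper does not prove this lemma at all: it states the Cauchy Interlace Theorem as a well-known classical fact and uses it as a black box, so there is no in-paper argument to compare against. Your Courant--Fischer derivation is the standard textbook proof, and the delicate part --- the index bookkeeping in the lower bound, where $\mu_j(-B)\le\lambda_j(-A)$ becomes $\mu_{m-j+1}(B)\ge\lambda_{n-j+1}(A)$ and the substitution $i=m-j+1$ yields $\mu_i(B)\ge\lambda_{n-m+i}(A)$ --- checks out, as does the reduction to a leading principal block via permutation similarity and the identification $B=P^{*}AP$ with $P^{*}P=I_m$. The only caveat worth flagging is in your sketched alternative: the argument via the rational function $\det(xI-A)/\det(xI-B)$ for the one-step case $m=n-1$ requires some care when $A$ and $B$ share eigenvalues (common zeros and poles cancel), but since you present that only as an optional route and your main variational argument is complete, this does not affect the validity of the proof.
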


From Lemma \ref{Lemma-2-4} one can easily deduce the following result.
\begin{lem}\label{Lemma-2-5}
If $H$ is a connected induced subgraph of $G$ with diameter $d(H)<3$, then the $D$-eigenvalues of $H$ interlace those of $G$.
\end{lem}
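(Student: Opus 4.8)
The plan is to reduce the statement to a direct application of the Cauchy Interlace Theorem (Lemma~\ref{Lemma-2-4}). The distance matrix $D(H)$ is built from distances $d_H(\cdot,\cdot)$ measured \emph{inside} $H$, whereas the natural candidate for a principal submatrix of $D(G)$ is $D_G(V(H))$, whose entries are the distances $d_G(\cdot,\cdot)$ measured in the whole graph $G$. Since interlacing applies only to principal submatrices, the crux is to show that under the hypothesis $d(H)<3$ these two matrices coincide; equivalently, that $H$ is embedded in $G$ isometrically, i.e. $d_H(u,v)=d_G(u,v)$ for all $u,v\in V(H)$.

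First I would establish this equality of distances by a short case analysis on the (necessarily small) value of $d_H(u,v)$. Because $H$ is an \emph{induced} subgraph, $u$ and $v$ are adjacent in $H$ if and only if they are adjacent in $G$, so $d_H(u,v)=1$ holds exactly when $d_G(u,v)=1$. As $d(H)\le 2$, the only remaining possibility is $d_H(u,v)=2$: here $u,v$ are non-adjacent in $H$ (whence $d_G(u,v)\ge 2$) but admit a common neighbour $w\in V(H)$. Since $H$ is induced, $w$ stays adjacent to both $u$ and $v$ in $G$, forcing $d_G(u,v)\le 2$ and therefore $d_G(u,v)=2=d_H(u,v)$. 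This yields $D(H)=D_G(V(H))$.

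With the identification $D(H)=D_G(V(H))$ in hand, $D(H)$ is literally a principal submatrix of the real symmetric (hence Hermitian) matrix $D(G)$, and Lemma~\ref{Lemma-2-4} immediately gives that the $D$-eigenvalues of $H$ interlace those of $G$.

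I expect the only genuinely delicate point to be recognising \emph{why} the diameter bound $d(H)<3$ is precisely the hypothesis required: it guarantees that every distance realised in $H$ comes either from an adjacency or from a two-step path through a vertex of $H$, both of which survive in $G$ because $H$ is induced. For $d(H)\ge 3$ this reasoning collapses—a shortest $u$–$v$ path in $H$ of length at least $3$ could be short-circuited by vertices of $G$ lying outside $H$, producing $d_G(u,v)<d_H(u,v)$, so that $D(H)$ ceases to be a submatrix of $D(G)$ and the interlacing conclusion may fail.
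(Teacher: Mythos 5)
Your proof is correct and follows exactly the route the paper intends: the paper simply states that Lemma~\ref{Lemma-2-5} "can easily be deduced" from the Cauchy Interlace Theorem, and your argument supplies the omitted verification that $d(H)\le 2$ together with $H$ being induced forces $D(H)=D_G(V(H))$, after which Lemma~\ref{Lemma-2-4} applies directly.
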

Note that $\partial_{2}(K_{1,2})=1-\sqrt{3}$. By Lemma \ref{Lemma-2-5}, we have
\begin{lem}\label{Lemma-2-6}
Let $G$ be a connected graph with $n\geq 2$ vertices. Then $\partial_{2}(G)<1-\sqrt{3}$ if and only if $G$ is the complete graph $K_n$.
\end{lem}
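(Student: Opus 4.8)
The plan is to prove the two implications separately, disposing of the easy direction by direct computation and the substantive direction by interlacing against an induced $K_{1,2}$.

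First I would treat sufficiency. If $G=K_n$, then every pair of distinct vertices is at distance $1$, so $D(K_n)=J-I$, where $J$ is the all-ones matrix. Since $J$ has eigenvalue $n$ once and $0$ with multiplicity $n-1$, we obtain $\mathrm{Spec}_D(K_n)=\{[n-1]^1,[-1]^{n-1}\}$, and in particular $\partial_2(K_n)=-1$. As $1-\sqrt{3}\approx-0.732>-1$, this gives $\partial_2(K_n)=-1<1-\sqrt{3}$, which also covers the base case $n=2$.

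For necessity I would argue the contrapositive: assuming $G$ is connected on $n\geq 2$ vertices with $G\neq K_n$, the goal is $\partial_2(G)\geq 1-\sqrt{3}$. Since the only connected graph on two vertices is $K_2$, we must have $n\geq 3$. Because $G$ is not complete, I would pick two non-adjacent vertices together with a shortest path joining them (which exists by connectivity); its first three vertices $x_0,x_1,x_2$ satisfy $x_0\sim x_1\sim x_2$ and $x_0\not\sim x_2$, for otherwise the path could be shortened. Hence $G[\{x_0,x_1,x_2\}]\cong K_{1,2}$ is an induced subgraph of diameter $2<3$. Invoking Lemma~\ref{Lemma-2-5}, its $D$-eigenvalues interlace those of $G$; applying Lemma~\ref{Lemma-2-4} with $A=D(G)$ of order $n$ and $B=D(K_{1,2})$ of order $m=3$ at index $i=2$ gives $\partial_2(G)=\lambda_2(A)\geq\mu_2(B)=\partial_2(K_{1,2})$. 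Using the recorded value $\partial_2(K_{1,2})=1-\sqrt{3}$, we conclude $\partial_2(G)\geq 1-\sqrt{3}$, which is exactly the contrapositive. Combining the two directions finishes the proof.

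The step carrying the real content is confirming that the diameter-$2$ hypothesis lets us identify the genuine distance matrix $D(K_{1,2})$ with the principal submatrix $D_G(\{x_0,x_1,x_2\})$ of $D(G)$; that is, distances measured inside the induced subgraph must agree with distances in $G$ on these three vertices. This is precisely what underlies Lemma~\ref{Lemma-2-5}: a distance-$2$ pair in a diameter-$2$ induced subgraph is non-adjacent in $G$, so has $G$-distance at least $2$, while adding vertices can only shorten distances, forcing equality. This is the only subtle point; the interlacing inequality and the eigenvalue computation are then immediate.
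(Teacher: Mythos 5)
Your proof is correct and follows the same route as the paper, which simply notes $\partial_2(K_{1,2})=1-\sqrt{3}$ and invokes the interlacing lemma (Lemma~\ref{Lemma-2-5}); you have merely filled in the routine details (the spectrum of $K_n$ and the existence of an induced $P_3\cong K_{1,2}$ in any connected non-complete graph) that the paper leaves implicit.
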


Let $G$ be a connected graph on $n$ vertices, and let $S=\{v_1,\ldots,v_p\}\subseteq V(G)$ $(p\ge2)$ be a clique (resp. independent set) such that $N_G(v_i)\setminus S=N_G(v_j)\setminus S$ for $1\le i,j\le p$.  Take  $\mathbf{x}_\ell\in\mathbb{R}^n$ ($2\leq \ell\leq p$) as the vector defined on $V(G)$ with $\mathbf{x}_\ell(v_1)=1$, $\mathbf{x}_\ell(v_\ell)=-1$ and $\mathbf{x}_\ell(v)=0$ for $v\not\in\{v_1,v_\ell\}$, then one can easily verify that $D(G)\mathbf{x}_\ell=-\mathbf{x}_\ell$ (resp. $D(G)\mathbf{x}_\ell=-2\cdot\mathbf{x}_\ell$). Thus $-1$ (resp. $-2$) is a distance  eigenvalue of $G$ with multiplicity at least $p-1$ (cf. \cite{Lu}). If there are $r$ disjoint cliques (resp. independent sets) $S_1,\ldots,S_r$ ($|S_i|=p_i\geq 2$) of $V(G)$ sharing the same property as $S$, then we may conclude that $-1$ (resp. $-2$) is a distance  eigenvalue of $G$ with multiplicity at least $\sum_{i=1}^rp_i-r$. Thus we have the following result.

\begin{lem}\label{Lemma-2-7}
Let $G$ be a connected graph. If $S_1,\ldots,S_r$ ($|S_i|=p_i\geq 2$) are disjoint cliques (resp. independent sets) of $V(G)$ such that, for each $1\leq i\leq r$,  $N_G(u)\setminus S_i=N_G(v)\setminus S_i$ for any $u,v\in S_i$, then $-1$ (resp. $-2$) is a distance  eigenvalue of $G$ with multiplicity at least $\sum_{i=1}^rp_i-r$.
\end{lem}

For a connected  graph $G$ of order $n$, the vertex partition $\Pi:$ $V(G)=V_1\cup V_2\cup\cdots\cup V_k$ is called a \emph{distance equitable partition} if, for any $v\in V_i$, $\sum_{u\in V_j}d(v,u)=b_{ij}$ is a constant only dependent on $i,j$ ($1\le i,j\le k$). The matrix $B_\Pi=(b_{ij})_{k\times k}$ is called the \emph{distance divisor matrix} of $G$ with respect to $\Pi$.  The \emph{characteristic matrix} $\chi_\Pi$ of $\Pi$  is the $n\times k$ matrix whose columns are the character vectors of $V_1,\ldots,V_k$. 

The following lemma is an analogue of the result for adjacency matrix (cf. \cite{Godsil}, pp. 195--198), which states that the eigenvalues of $B_\Pi$ are also that of $D(G)$.
\begin{lem}\label{Lemma-2-8}
Let $G$ be a connected graph with distance matrix $D(G)$, and let $\Pi:V(G)=V_1\cup V_2\cup\cdots\cup V_k$ be a distance equitable partition of $G$ with distance divisor matrix $B_{\Pi}$. Then $\Psi_G(x)=\det(xI-B_{\Pi})|\Phi_G(x)=\det(xI-D(G))$, and the largest eigenvalue of $B_{\Pi}$ equals to  $\partial_1(G)$. In particular, the matrix $D(G)$ has the following two kinds of eigenvectors:
\begin{enumerate}[(i)]
\vspace{-0.2cm}
\item the eigenvectors in the column space of $\chi_\Pi$, and the corresponding eigenvalues coincide
with the eigenvalues of $B_\Pi$;
\vspace{-0.2cm}
\item the eigenvectors orthogonal to the columns of $\chi_\Pi$.
\end{enumerate}
\end{lem}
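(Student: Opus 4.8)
The plan is to build everything on the single matrix identity
$$D(G)\,\chi_\Pi=\chi_\Pi\,B_\Pi.$$
To verify it I would compare entries. The $(v,j)$-entry of the left-hand side is $\sum_{u\in V(G)}d(v,u)(\chi_\Pi)_{uj}=\sum_{u\in V_j}d(v,u)$, which equals $b_{ij}$ whenever $v\in V_i$ by the definition of a distance equitable partition; the $(v,j)$-entry of the right-hand side is $\sum_{\ell}(\chi_\Pi)_{v\ell}b_{\ell j}=b_{ij}$ for $v\in V_i$, since each row of $\chi_\Pi$ has a single nonzero entry (the parts $V_1,\ldots,V_k$ partition $V(G)$). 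Hence the two sides agree. This identity says precisely that the column space $\mathcal{C}=\mathrm{col}(\chi_\Pi)$ is invariant under $D(G)$, and that, in the basis formed by the columns of $\chi_\Pi$ (which is linearly independent, as the $V_i$ have disjoint supports), the action of $D(G)$ on $\mathcal{C}$ is represented by $B_\Pi$.

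From the identity the eigenvector statement (i) is immediate: if $B_\Pi y=\lambda y$ with $y\neq 0$, then $D(G)(\chi_\Pi y)=\chi_\Pi B_\Pi y=\lambda(\chi_\Pi y)$, and $\chi_\Pi y\neq 0$ because the columns of $\chi_\Pi$ are independent. For the divisibility $\Psi_G(x)\mid\Phi_G(x)$ I would extend the columns of $\chi_\Pi$ to a basis of $\mathbb{R}^n$; with respect to it $D(G)$ becomes block upper triangular with leading block $B_\Pi$, so $\Phi_G(x)=\det(xI-B_\Pi)\cdot q(x)=\Psi_G(x)\,q(x)$ for some polynomial $q(x)$. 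Invoking the symmetry of $D(G)$ then upgrades this picture: since $\mathcal{C}$ is $D(G)$-invariant and $D(G)$ is symmetric, the orthogonal complement $\mathcal{C}^\perp$ is $D(G)$-invariant as well, so $D(G)$ restricts to a symmetric operator on $\mathcal{C}^\perp$ and admits an orthonormal eigenbasis of $\mathcal{C}^\perp$; these eigenvectors are by construction orthogonal to every column of $\chi_\Pi$, which is statement (ii).

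It remains to identify the largest eigenvalue of $B_\Pi$ with $\partial_1(G)$, and here I would appeal to Perron--Frobenius. For $n\ge 2$ and $G$ connected, $D(G)$ is a nonnegative irreducible matrix, so $\partial_1(G)$ is simple and is the unique eigenvalue admitting a positive eigenvector. On the other hand $B_\Pi$ is nonnegative with strictly positive off-diagonal entries (for $i\neq j$ one has $b_{ij}=\sum_{u\in V_j}d(v,u)>0$), hence irreducible, so its spectral radius $\rho$ is attained by a positive eigenvector $y>0$. Then $\chi_\Pi y$ is entrywise positive (each vertex lies in exactly one part), and by the identity it is an eigenvector of $D(G)$ for the eigenvalue $\rho$. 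Matching it against the unique positive Perron eigenvector of $D(G)$ forces $\rho=\partial_1(G)$, giving the claim.

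The main obstacle is the divisibility statement, since $B_\Pi$ need not be symmetric and so cannot simply be diagonalized; the clean way around this is the invariant-subspace and block-triangular argument above, with the symmetry of $D(G)$ invoked only to obtain the sharper orthogonal decomposition needed for part (ii). The remaining steps are routine once the identity $D(G)\chi_\Pi=\chi_\Pi B_\Pi$ is in hand.
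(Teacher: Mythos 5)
Your proof is correct and follows essentially the same route the paper intends: the paper gives no explicit proof of this lemma, citing instead the adjacency-matrix analogue in Godsil--Royle, and your argument is exactly the standard adaptation of that proof to $D(G)$ --- the commutation identity $D(G)\chi_\Pi=\chi_\Pi B_\Pi$, block triangularization for the divisibility, symmetry of $D(G)$ for the orthogonal complement, and Perron--Frobenius (valid since $D(G)$ and $B_\Pi$ both have positive off-diagonal entries for connected $G$) to identify the largest eigenvalue of $B_\Pi$ with $\partial_1(G)$. No gaps.
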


Let $G$ be a graph with vertex set $V(G)$. For any $X\subseteq V(G)$, we say that $X$ is \emph{$G$-connected} if the induced subgraph $G[X]$  is connected.
\begin{lem}[\cite{Seinsche}]\label{Lemma-2-9}
Let $G$ be a graph. The following statements are equivalent.
\begin{enumerate}[(i)]
\vspace{-0.2cm}
\item $G$ has no induced subgraph isomorphic to $P_4$.
\vspace{-0.2cm}
\item Every subset of $V(G)$ with more than one element is not $G$-connected or not $G^c$-connected.
\end{enumerate}
\end{lem}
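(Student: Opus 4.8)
The statement is the classical characterization of $P_4$-free graphs, so the plan is to prove the two implications separately, treating $(ii)\Rightarrow(i)$ as routine and concentrating effort on $(i)\Rightarrow(ii)$. The implication $(ii)\Rightarrow(i)$ is immediate: the path $P_4$ is connected and self-complementary, so if some $4$-subset $X$ induced a $P_4$, then both $G[X]\cong P_4$ and $G^c[X]=(G[X])^c\cong P_4$ would be connected, making $X$ simultaneously $G$-connected and $G^c$-connected and contradicting $(ii)$. Hence $(ii)$ forbids induced copies of $P_4$.

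For the reverse direction I would first reduce to a statement about a single graph. Since every induced subgraph of a $P_4$-free graph is again $P_4$-free and since $G^c[X]=(G[X])^c$, it suffices to prove the claim that \emph{every $P_4$-free graph $H$ with $|V(H)|\ge 2$ has $H$ or $H^c$ disconnected}; applying this to $H=G[X]$ for each $X\subseteq V(G)$ with $|X|\ge2$ then yields $(ii)$. I would establish the claim by induction on $|V(H)|$, the base case $|V(H)|=2$ being trivial. In the inductive step we may assume $H$ is connected (otherwise there is nothing to show) and must prove $H^c$ is disconnected. Deleting a vertex $v$, the graph $H-v$ is $P_4$-free on one fewer vertex, so by induction $H-v$ or $(H-v)^c$ is disconnected, giving two cases. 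If $H-v$ is disconnected with components $C_1,\dots,C_k$ ($k\ge2$), then connectedness of $H$ forces a neighbour of $v$ in each $C_i$, and a short argument shows $v$ must in fact be adjacent to \emph{all} of $H-v$: otherwise $v$ has, inside some $C_i$, a neighbour and a non-neighbour joined by an edge $a'b'$ with $v\sim a'$ and $v\not\sim b'$, and together with a neighbour $c$ of $v$ in another component the set $\{c,v,a',b'\}$ induces the $P_4$ given by $c-v-a'-b'$; thus $v$ is universal and $H^c$ is disconnected.

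The remaining case is the crux. Here $(H-v)^c$ is disconnected, so $H-v$ is a join of its co-components $X_1,\dots,X_m$ ($m\ge2$), chosen so that the complement of each $(H-v)[X_i]$ is connected. I would show $v$ must be adjacent to all of some part $X_j$; then $v$ has no neighbour of $X_j$ in $H^c$ and, since $X_j$ is a union of components of $(H-v)^c$, the part $X_j$ is cut off in $H^c$, so $H^c$ is disconnected. The hard part is proving that such a fully dominated part exists. Assuming instead that $v$ has a non-neighbour $b_i$ in every part while, by connectedness of $H$, having some neighbour $a\in X_k$, I would exploit the connectedness of the complement of $(H-v)[X_k]$ to select a non-neighbour $b\in X_k$ of $v$ with $a\not\sim b$ in $H$ (the two sets of neighbours and non-neighbours of $v$ inside $X_k$ are both nonempty, so that connected complement has an edge between them, i.e.\ a non-edge of $H$). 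Taking $b_\ell$ in another part $X_\ell$, the join supplies $a\sim b_\ell$ and $b\sim b_\ell$, while $v\not\sim b_\ell$, $v\not\sim b$ and $a\not\sim b$ are arranged, so $\{v,a,b_\ell,b\}$ induces the path $v-a-b_\ell-b$, a contradiction. The only delicate point is precisely this: one must take the $X_i$ to be the \emph{co-components} rather than an arbitrary join decomposition, so that the required crossing non-edge inside $X_k$ is guaranteed to exist.
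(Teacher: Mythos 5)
The paper does not prove this lemma at all---it is quoted from Seinsche's 1974 paper as a known result---so there is no internal proof to compare against. Your argument is correct and complete: the reduction to the single-graph claim that every $P_4$-free graph on at least two vertices is disconnected or co-disconnected is the right move, the induction is sound in both branches, and you correctly identified the one delicate point, namely that in the second branch the parts must be taken to be the co-components (connected components of $(H-v)^c$) so that connectedness of $(H-v)^c[X_k]$ guarantees a non-edge of $H$ crossing from the $v$-neighbours to the $v$-non-neighbours inside $X_k$, which is exactly what produces the induced path $v$--$a$--$b_\ell$--$b$. This is essentially the standard proof of Seinsche's theorem, and it would serve as a valid self-contained replacement for the citation.
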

Let $G_1$ and $G_2$ be two vertex disjoint graphs. The \emph{join} of $G_1$ and $G_2$, denoted by $G_1\vee G_2$, is the graph obtained from $G_1\cup G_2$ by connecting all edges between $G_1$ and $G_2$.  Let $G$ be a connected graph containing no induced $P_4$. Then $V(G)$ is a subset of itself and so is $G$-connected, by Lemma \ref{Lemma-2-9}, we know that $G^c$ is disconnected. Thus we obtain the following result.
\begin{lem}\label{Lemma-2-10}
If $G$ is a connected graph containing no induced $P_4$, then $G$ must be a join of two graphs, i.e., $G\cong G_1\vee G_2$, where $G_1$ and $G_2$ are non-null.
\end{lem}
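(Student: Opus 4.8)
The plan is to derive the conclusion directly from Seinsche's characterization in Lemma~\ref{Lemma-2-9} by applying it to the \emph{entire} vertex set and then passing to the complement. The guiding observation is that ``$G$ is a join of two non-null graphs'' is exactly the statement that $G^c$ is disconnected, so the whole task reduces to showing $G^c$ is disconnected.

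First I would record the harmless assumption that $G$ has at least two vertices, since a join of two non-null graphs necessarily has order $\ge 2$; for $|V(G)|=1$ the statement is vacuous (or simply excluded). Next, take $X=V(G)$. Because $G$ is connected and $|V(G)|>1$, the induced subgraph $G[X]=G$ is connected, i.e.\ $X$ is $G$-connected. Now I invoke Lemma~\ref{Lemma-2-9}: since $G$ has no induced $P_4$, statement (ii) holds, so every subset of $V(G)$ with more than one element fails to be $G$-connected or fails to be $G^c$-connected. Applying this to the particular subset $X=V(G)$, which we already know \emph{is} $G$-connected, forces $X$ to be \emph{not} $G^c$-connected. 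In other words, $G^c$ is disconnected.

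Finally I translate ``$G^c$ disconnected'' into a join decomposition of $G$. Since $G^c$ is disconnected, its vertex set splits as $V(G)=V_1\cup V_2$ with $V_1,V_2$ non-empty and no edges of $G^c$ joining $V_1$ to $V_2$. Complementing, every pair $\{u,w\}$ with $u\in V_1$, $w\in V_2$ is an edge of $G$. Hence, setting $G_1=G[V_1]$ and $G_2=G[V_2]$, all edges between $G_1$ and $G_2$ are present, which is precisely $G\cong G_1\vee G_2$ with both factors non-null, as required.

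There is essentially no hard computational step here; the argument is a short logical deduction. The only point that requires care is the correct reading of the disjunction in Lemma~\ref{Lemma-2-9}(ii): one must apply it to the specific set $X=V(G)$ and use connectivity of $G$ to eliminate the first alternative, thereby concluding the second. I would also make explicit the standard equivalence between the disconnectedness of $G^c$ and the representability of $G$ as a join, which is the mechanism that converts Seinsche's combinatorial condition into the desired structural form.
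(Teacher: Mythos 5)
Your argument is correct and is essentially the same as the paper's: the paper also applies Lemma \ref{Lemma-2-9} to $X=V(G)$, uses connectivity of $G$ to conclude that $G^c$ is disconnected, and then reads off the join decomposition. Your write-up just spells out the final complementation step more explicitly.
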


\section{Graphs with $\partial_3(G)\leq -1$ and $\partial_{n-1}(G)\geq -2$}\label{s-3}
In this section, we focus on characterizing those graphs with $\partial_3(G)\leq -1$ and $\partial_{n-1}(G)\geq -2$. To achieve this goal, we need the following two crucial lemmas.
\begin{lem}\label{Lemma-3-1}
If $G$ is a connected graph on $n$ vertices with $\partial_3(G)\leq -1$ and $\partial_{n-1}(G)\geq -2$, then the graphs $F_1$--$F_7$ shown in Fig. \ref{Figure-2} cannot be induced subgraphs of $G$.
\end{lem}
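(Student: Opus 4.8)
The plan is to argue by contradiction using interlacing. The key observation is that Lemma~\ref{Lemma-2-5} says that whenever $H$ is a connected induced subgraph of $G$ with $d(H)<3$, the $D$-eigenvalues of $H$ interlace those of $G$. So if some forbidden graph $F_i$ (or a suitable connected induced subgraph of it) had diameter at most $2$ and violated one of the two spectral constraints $\partial_3(G)\leq -1$ or $\partial_{n-1}(G)\geq -2$, then $G$ could not contain $F_i$ as an induced subgraph. Concretely, suppose $H$ is a connected induced subgraph of $G$ on $m$ vertices with $d(H)\le 2$. By interlacing, $\partial_3(G)\ge\partial_3(H)$ and $\partial_{m-1}(H)\ge\partial_{n-1}(G)$. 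Hence if $\partial_3(H)>-1$ we contradict $\partial_3(G)\le -1$, and if $\partial_{m-1}(H)<-2$ we contradict $\partial_{n-1}(G)\ge -2$. Either inequality rules out $H$, and therefore rules out any $F_i$ containing such an $H$.

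First I would record the interlacing consequences precisely: from Lemma~\ref{Lemma-2-4} applied through Lemma~\ref{Lemma-2-5}, for a connected induced subgraph $H$ of $G$ on $m$ vertices with $d(H)\le 2$ one has $\partial_i(G)\ge\partial_i(H)$ and $\partial_{n-m+i}(G)\le\partial_i(H)$ for all admissible $i$. The two inequalities I will actually use are $\partial_3(G)\ge\partial_3(H)$ (taking $i=3$) and $\partial_{n-1}(G)\le\partial_{m-1}(H)$ (taking $i=m-1$, so $n-m+i=n-1$). This reduces the whole lemma to a finite spectral check: for each $F_i$, exhibit a connected induced subgraph $H_i\subseteq F_i$ with $d(H_i)\le 2$ whose distance spectrum has either $\partial_3(H_i)>-1$ or $\partial_{m-1}(H_i)<-2$.

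Next I would carry out that finite computation. For each of $F_1$ through $F_7$ I would identify the relevant witnessing subgraph $H_i$ (often $F_i$ itself, when $d(F_i)\le 2$), write down its distance matrix $D(H_i)$, and compute its characteristic polynomial $\Phi_{H_i}(x)=\det(xI-D(H_i))$ to locate the offending eigenvalue. In practice one evaluates $\Phi_{H_i}(-1)$ and $\Phi_{H_i}(-2)$ together with sign/multiplicity information to decide on which side of $-1$ the third-largest eigenvalue falls, or on which side of $-2$ the $(m-1)$st eigenvalue falls. Where $F_i$ itself has diameter $3$, I would instead pick an induced subgraph $H_i$ of diameter $2$ (so that Lemma~\ref{Lemma-2-5} applies) that already carries the bad eigenvalue; for small graphs such a subgraph can always be found, and its spectrum can be read off directly. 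Symmetry of the $F_i$ and Lemma~\ref{Lemma-2-7} (which pins $-1$ or $-2$ as eigenvalues coming from twin cliques or independent sets) will shorten several of these determinant evaluations.

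The main obstacle I anticipate is not conceptual but bookkeeping: one must correctly handle those $F_i$ of diameter $3$, since Lemma~\ref{Lemma-2-5} requires the interlacing subgraph to have diameter strictly less than $3$. Choosing the right diameter-$2$ induced subgraph $H_i$ and verifying that it genuinely violates one of the two bounds is the delicate step, because an ill-chosen subgraph may satisfy both constraints even though the full $F_i$ does not. I would therefore be careful to select, for each diameter-$3$ forbidden graph, a subgraph whose spectral defect is preserved under the passage to the induced subgraph, and to double-check each characteristic-polynomial evaluation at $x=-1$ and $x=-2$.
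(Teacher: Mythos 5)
Your proposal is correct and follows essentially the same route as the paper: each $F_i$ has diameter $2$, so Lemma~\ref{Lemma-2-5} applies directly and a finite spectral check (showing $\partial_3(F_i)>-1$ or the second least $D$-eigenvalue of $F_i$ is less than $-2$) yields the contradiction via the interlacing inequalities $\partial_3(G)\ge\partial_3(F_i)$ and $\partial_{n-1}(G)\le\partial_{m-1}(F_i)$. Your anticipated complication about diameter-$3$ forbidden graphs never arises here, since $d(F_i)=2$ for all seven graphs.
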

\begin{proof}
By simple computation, it is seen that each $F_i$  ($1\leq i\leq 7$) has third largest $D$-eigenvalue greater than $-1$ or second least $D$-eigenvalue less than $-2$  (see Fig. \ref{Figure-2}). Then the result follows by Lemma \ref{Lemma-2-5} due to $d(F_i)=2$ for each $i$.
\end{proof}

\begin{figure}[t]
\begin{center}
\unitlength 1.5mm 
\linethickness{0.4pt}
\ifx\plotpoint\undefined\newsavebox{\plotpoint}\fi 
\begin{picture}(69,24)(0,-2)
\put(1,17){\line(1,0){12}}
\put(13,17){\line(-1,1){6}}
\put(7,23){\line(-1,-1){6}}
\put(18,17){\line(1,0){12}}
\put(30,17){\line(-1,1){6}}
\put(24,23){\line(-1,-1){6}}
\put(35,17){\line(1,0){12}}
\put(35,17){\line(1,1){6}}
\put(41,23){\line(1,-1){6}}
\put(52,17){\line(1,0){4}}
\put(64,17){\line(-1,1){6}}
\put(58,23){\line(-1,-1){6}}
\put(58,23){\line(-1,-3){2}}
\put(58,23){\line(1,-3){2}}
\put(60,17){\line(1,0){4}}
\put(58,23){\line(5,-3){10}}
\put(7,23){\circle*{1.5}}
\put(1,17){\circle*{1.5}}
\put(5,17){\circle*{1.5}}
\put(9,17){\circle*{1.5}}
\put(13,17){\circle*{1.5}}
\put(24,23){\circle*{1.5}}
\put(18,17){\circle*{1.5}}
\put(22,17){\circle*{1.5}}
\put(26,17){\circle*{1.5}}
\put(30,17){\circle*{1.5}}
\put(24,23){\line(-1,-3){2}}
\put(41,23){\line(-1,-3){2}}
\put(41,23){\line(1,-3){2}}
\put(41,23){\circle*{1.5}}
\put(35,17){\circle*{1.5}}
\put(39,17){\circle*{1.5}}
\put(43,17){\circle*{1.5}}
\put(47,17){\circle*{1.5}}
\put(58,23){\circle*{1.5}}
\put(52,17){\circle*{1.5}}
\put(56,17){\circle*{1.5}}
\put(60,17){\circle*{1.5}}
\put(64,17){\circle*{1.5}}
\put(68,17){\circle*{1.5}}
\put(7,14.5){\makebox(0,0)[cc]{\footnotesize$\partial_3=-0.3820$}}
\put(24,14.5){\makebox(0,0)[cc]{\footnotesize$\partial_3=-0.9125$}}
\put(41,14.5){\makebox(0,0)[cc]{\footnotesize$\partial_3=-0.7217$}}
\put(58,14.5){\makebox(0,0)[cc]{\footnotesize$\partial_5=-2.2223$}}
\put(7,12){\makebox(0,0)[cc]{\footnotesize$F_1$}}
\put(24,12){\makebox(0,0)[cc]{\footnotesize$F_2$}}
\put(41,12){\makebox(0,0)[cc]{\footnotesize$F_3$}}
\put(58,12){\makebox(0,0)[cc]{\footnotesize$F_4$}}
\put(1,9){\line(1,0){6}}
\put(7,9){\line(0,-1){6}}
\put(7,3){\line(-1,0){6}}
\put(1,3){\line(0,1){6}}
\put(1,9){\line(1,-1){6}}
\put(7,9){\line(-1,-1){6}}
\put(13,9){\line(0,-1){6}}
\put(13,3){\line(-2,1){12}}
\put(7,9){\line(1,-1){6}}
\put(13,9){\line(-2,-1){12}}
\put(13,9){\line(-1,-1){6}}
\put(20,3){\line(1,0){8}}
\put(28,3){\line(1,0){2}}
\put(20,3){\line(-1,0){2}}
\put(18,3){\line(2,3){4}}
\put(22,9){\line(1,-3){2}}
\put(22,9){\line(4,-3){8}}
\put(30,3){\line(-2,3){4}}
\put(26,9){\line(-1,-3){2}}
\put(26,9){\line(-4,-3){8}}
\put(35,3){\line(1,0){5}}
\put(40,3){\line(1,0){1}}
\put(41,3){\line(1,0){6}}
\put(35,3){\line(1,1){6}}
\put(41,9){\line(0,-1){6}}
\put(41,9){\line(1,-1){6}}
\put(41,9){\line(5,-3){10}}
\put(1,9){\circle*{1.5}}
\put(7,9){\circle*{1.5}}
\put(13,9){\circle*{1.5}}
\put(1,3){\circle*{1.5}}
\put(7,3){\circle*{1.5}}
\put(13,3){\circle*{1.5}}
\put(22,9){\circle*{1.5}}
\put(26,9){\circle*{1.5}}
\put(18,3){\circle*{1.5}}
\put(24,3){\circle*{1.5}}
\put(30,3){\circle*{1.5}}
\put(41,9){\circle*{1.5}}
\put(35,3){\circle*{1.5}}
\put(41,3){\circle*{1.5}}
\put(47,3){\circle*{1.5}}
\put(51,3){\circle*{1.5}}
\put(7,0.5){\makebox(0,0)[cc]{\footnotesize$\partial_5=-2.3589$}}
\put(24,0.5){\makebox(0,0)[cc]{\footnotesize$\partial_3=-0.8284$}}
\put(41,0.5){\makebox(0,0)[cc]{\footnotesize$\partial_3=-0.7667$}}
\put(7,-2){\makebox(0,0)[cc]{\footnotesize$F_5$}}
\put(24,-2){\makebox(0,0)[cc]{\footnotesize$F_6$}}
\put(41,-2){\makebox(0,0)[cc]{\footnotesize$F_7$}}
\end{picture}
\caption{\small The graphs $F_1$--$F_7$.}
\label{Figure-2}
\end{center}
\vspace{-0.3cm}
\end{figure}

\begin{lem}\label{Lemma-3-2}
If $G$ is a connected graph on $n$ ($n\geq 4$) vertices with $\partial_3(G)\leq -1$ and $\partial_{n-1}(G)\geq -2$, then each matrix listed below cannot be the principal submatrix of $D(G)$.
$$\begin{aligned}
&A_1\left[\begin{smallmatrix}
0&1&2&3&1\\
1&0&1&2&2\\
2&1&0&1&2\\
3&2&1&0&2\\
1&2&2&2&0
\end{smallmatrix}\right]~~~~~
A_2\left[\begin{smallmatrix}
0&1&2&3&1\\
1&0&1&2&2\\
2&1&0&1&2\\
3&2&1&0&3\\
1&2&2&3&0
\end{smallmatrix}\right]~~~~
A_3\left[\begin{smallmatrix}
0&1&2&3&1\\
1&0&1&2&2\\
2&1&0&1&3\\
3&2&1&0&2\\
1&2&3&2&0
\end{smallmatrix}\right]~~~~
A_4\left[\begin{smallmatrix}
0&1&2&3&1\\
1&0&1&2&2\\
2&1&0&1&3\\
3&2&1&0&3\\
1&2&3&3&0
\end{smallmatrix}\right]~~~~
A_5\left[\begin{smallmatrix}
0&1&2&3&2\\
1&0&1&2&1\\
2&1&0&1&1\\
3&2&1&0&2\\
2&1&1&2&0
\end{smallmatrix}\right]\\
&A_6\left[\begin{smallmatrix}
0&1&2&3&2&2\\
1&0&1&2&1&1\\
2&1&0&1&2&2\\
3&2&1&0&2&2\\
2&1&2&2&0&1\\
2&1&2&2&1&0
\end{smallmatrix}\right]~~~
A_7\left[\begin{smallmatrix}
0&1&2&3&2&2\\
1&0&1&2&1&1\\
2&1&0&1&2&2\\
3&2&1&0&2&3\\
2&1&2&2&0&1\\
2&1&2&3&1&0
\end{smallmatrix}\right]~~
A_8\left[\begin{smallmatrix}
0&1&2&3&2&2\\
1&0&1&2&1&1\\
2&1&0&1&2&2\\
3&2&1&0&3&2\\
2&1&2&3&0&1\\
2&1&2&2&1&0
\end{smallmatrix}\right]~~
A_9\left[\begin{smallmatrix}
0&1&2&3&2&2\\
1&0&1&2&1&1\\
2&1&0&1&2&2\\
3&2&1&0&3&3\\
2&1&2&3&0&1\\
2&1&2&3&1&0
\end{smallmatrix}\right]~
A_{10}\left[\begin{smallmatrix}
0&1&2&3&1&1\\
1&0&1&2&1&1\\
2&1&0&1&2&2\\
3&2&1&0&2&2\\
1&1&2&2&0&2\\
1&1&2&2&2&0
\end{smallmatrix}\right]\\
&A_{11}\left[\begin{smallmatrix}
0&1&2&3&1&1\\
1&0&1&2&1&1\\
2&1&0&1&2&2\\
3&2&1&0&2&3\\
1&1&2&2&0&2\\
1&1&2&3&2&0
\end{smallmatrix}\right]~
A_{12}\left[\begin{smallmatrix}
0&1&2&3&1&1\\
1&0&1&2&1&1\\
2&1&0&1&2&2\\
3&2&1&0&3&2\\
1&1&2&3&0&2\\
1&1&2&2&2&0
\end{smallmatrix}\right]~
A_{13}\left[\begin{smallmatrix}
0&1&2&3&1&1\\
1&0&1&2&1&1\\
2&1&0&1&2&2\\
3&2&1&0&3&3\\
1&1&2&3&0&2\\
1&1&2&3&2&0
\end{smallmatrix}\right]~
A_{14}\left[\begin{smallmatrix}
0&1&2&3&2&1\\
1&0&1&2&1&1\\
2&1&0&1&2&2\\
3&2&1&0&2&2\\
2&1&2&2&0&1\\
1&1&2&2&1&0
\end{smallmatrix}\right]~
A_{15}\left[\begin{smallmatrix}
0&1&2&3&2&1\\
1&0&1&2&1&1\\
2&1&0&1&2&2\\
3&2&1&0&2&2\\
2&1&2&2&0&2\\
1&1&2&2&2&0
\end{smallmatrix}\right]\\
&A_{16}\left[\begin{smallmatrix}
0&1&2&3&2&1\\
1&0&1&2&1&1\\
2&1&0&1&2&2\\
3&2&1&0&2&3\\
2&1&2&2&0&1\\
1&1&2&3&1&0
\end{smallmatrix}\right]~
A_{17}\left[\begin{smallmatrix}
0&1&2&3&2&1\\
1&0&1&2&1&1\\
2&1&0&1&2&2\\
3&2&1&0&2&3\\
2&1&2&2&0&2\\
1&1&2&3&2&0
\end{smallmatrix}\right]~
A_{18}\left[\begin{smallmatrix}
0&1&2&3&2&1\\
1&0&1&2&1&1\\
2&1&0&1&2&2\\
3&2&1&0&3&2\\
2&1&2&3&0&1\\
1&1&2&2&1&0
\end{smallmatrix}\right]~
A_{19}\left[\begin{smallmatrix}
0&1&2&3&2&1\\
1&0&1&2&1&1\\
2&1&0&1&2&2\\
3&2&1&0&3&2\\
2&1&2&3&0&2\\
1&1&2&2&2&0
\end{smallmatrix}\right]~
A_{20}\left[\begin{smallmatrix}
0&1&2&3&2&1\\
1&0&1&2&1&1\\
2&1&0&1&2&2\\
3&2&1&0&3&3\\
2&1&2&3&0&1\\
1&1&2&3&1&0
\end{smallmatrix}\right]\\
&A_{21}\left[\begin{smallmatrix}
0&1&2&3&2&1\\
1&0&1&2&1&1\\
2&1&0&1&2&2\\
3&2&1&0&3&3\\
2&1&2&3&0&2\\
1&1&2&3&2&0
\end{smallmatrix}\right]~
A_{22}\left[\begin{smallmatrix}
0&1&2&3&2&1\\
1&0&1&2&1&2\\
2&1&0&1&2&1\\
3&2&1&0&2&2\\
2&1&2&2&0&2\\
1&2&1&2&2&0
\end{smallmatrix}\right]~
A_{23}\left[\begin{smallmatrix}
0&1&2&3&2&1\\
1&0&1&2&1&2\\
2&1&0&1&2&1\\
3&2&1&0&2&2\\
2&1&2&2&0&3\\
1&2&1&2&3&0
\end{smallmatrix}\right]~
A_{24}\left[\begin{smallmatrix}
0&1&2&3&2&1\\
1&0&1&2&1&2\\
2&1&0&1&2&1\\
3&2&1&0&3&2\\
2&1&2&3&0&2\\
1&2&1&2&2&0
\end{smallmatrix}\right]~
A_{25}\left[\begin{smallmatrix}
0&1&2&3&2&1\\
1&0&1&2&1&2\\
2&1&0&1&2&1\\
3&2&1&0&3&2\\
2&1&2&3&0&3\\
1&2&1&2&3&0
\end{smallmatrix}\right]\\
&A_{26}\left[\begin{smallmatrix}
0&1&2&3&2&1\\
1&0&1&2&1&1\\
2&1&0&1&2&1\\
3&2&1&0&2&2\\
2&1&2&2&0&2\\
1&1&1&2&2&0
\end{smallmatrix}\right]~
A_{27}\left[\begin{smallmatrix}
0&1&2&3&2&1\\
1&0&1&2&1&1\\
2&1&0&1&2&1\\
3&2&1&0&3&2\\
2&1&2&3&0&2\\
1&1&1&2&2&0
\end{smallmatrix}\right]~
A_{28}\left[\begin{smallmatrix}
0&1&2&3&1&1\\
1&0&1&2&1&2\\
2&1&0&1&2&1\\
3&2&1&0&2&2\\
1&1&2&2&0&2\\
1&2&1&2&2&0
\end{smallmatrix}\right]~
A_{29}\left[\begin{smallmatrix}
0&1&2&3&1&1\\
1&0&1&2&1&2\\
2&1&0&1&2&1\\
3&2&1&0&3&2\\
1&1&2&3&0&2\\
1&2&1&2&2&0
\end{smallmatrix}\right]~
A_{30}\left[\begin{smallmatrix}
0&1&2&3&1&1\\
1&0&1&2&1&1\\
2&1&0&1&2&1\\
3&2&1&0&2&2\\
1&1&2&2&0&2\\
1&1&1&2&2&0
\end{smallmatrix}\right]\\
&A_{31}\left[\begin{smallmatrix}
0&1&2&3&1&1\\
1&0&1&2&1&1\\
2&1&0&1&2&1\\
3&2&1&0&3&2\\
1&1&2&3&0&2\\
1&1&1&2&2&0
\end{smallmatrix}\right]~
A_{32}\left[\begin{smallmatrix}
0&1&2&3&2&2\\
1&0&1&2&1&1\\
2&1&0&1&2&2\\
3&2&1&0&2&1\\
2&1&2&2&0&1\\
2&1&2&1&1&0
\end{smallmatrix}\right]~
A_{33}\left[\begin{smallmatrix}
0&1&2&3&2&2\\
1&0&1&2&1&1\\
2&1&0&1&2&1\\
3&2&1&0&2&1\\
2&1&2&2&0&1\\
2&1&1&1&1&0
\end{smallmatrix}\right]~
A_{34}\left[\begin{smallmatrix}
0&1&2&3&1&2\\
1&0&1&2&1&1\\
2&1&0&1&2&2\\
3&2&1&0&2&1\\
1&1&2&2&0&1\\
2&1&2&1&1&0
\end{smallmatrix}\right]~
A_{35}\left[\begin{smallmatrix}
0&1&2&3&1&2\\
1&0&1&2&1&1\\
2&1&0&1&2&1\\
3&2&1&0&2&1\\
1&1&2&2&0&1\\
2&1&1&1&1&0
\end{smallmatrix}\right]\\
&A_{36}\left[\begin{smallmatrix}
0&1&2&3&2&2\\
1&0&1&2&1&2\\
2&1&0&1&2&1\\
3&2&1&0&2&2\\
2&1&2&2&0&1\\
2&2&1&2&1&0
\end{smallmatrix}\right]~
A_{37}\left[\begin{smallmatrix}
0&1&2&3&2&3\\
1&0&1&2&1&2\\
2&1&0&1&2&1\\
3&2&1&0&2&2\\
2&1&2&2&0&1\\
3&2&1&2&1&0
\end{smallmatrix}\right]~
A_{38}\left[\begin{smallmatrix}
0&1&2&3&2&2\\
1&0&1&2&1&2\\
2&1&0&1&2&1\\
3&2&1&0&3&2\\
2&1&2&3&0&1\\
2&2&1&2&1&0
\end{smallmatrix}\right]~
A_{39}\left[\begin{smallmatrix}
0&1&2&3&2&3\\
1&0&1&2&1&2\\
2&1&0&1&2&1\\
3&2&1&0&3&2\\
2&1&2&3&0&1\\
3&2&1&2&1&0
\end{smallmatrix}\right]~
A_{40}\left[\begin{smallmatrix}
0&1&2&3&2&2\\
1&0&1&2&1&2\\
2&1&0&1&2&1\\
3&2&1&0&2&1\\
2&1&2&2&0&1\\
2&2&1&1&1&0
\end{smallmatrix}\right]\\
&A_{41}\left[\begin{smallmatrix}
0&1&2&3&2&3\\
1&0&1&2&1&2\\
2&1&0&1&2&1\\
3&2&1&0&2&1\\
2&1&2&2&0&1\\
3&2&1&1&1&0
\end{smallmatrix}\right]~
A_{42}\left[\begin{smallmatrix}
0&1&2&3&1&2\\
1&0&1&2&1&2\\
2&1&0&1&2&1\\
3&2&1&0&2&1\\
1&1&2&2&0&1\\
2&2&1&1&1&0
\end{smallmatrix}\right]~
A_{43}\left[\begin{smallmatrix}
0&1&2&3&1&1\\
1&0&1&2&2&2\\
2&1&0&1&1&1\\
3&2&1&0&2&2\\
1&2&1&2&0&1\\
1&2&1&2&1&0
\end{smallmatrix}\right]~
A_{44}\left[\begin{smallmatrix}
0&1&2&3&1&1\\
1&0&1&2&1&1\\
2&1&0&1&1&1\\
3&2&1&0&2&2\\
1&1&1&2&0&2\\
1&1&1&2&2&0
\end{smallmatrix}\right]~
A_{45}\left[\begin{smallmatrix}
0&1&2&3&1&1\\
1&0&1&2&2&1\\
2&1&0&1&1&1\\
3&2&1&0&2&2\\
1&2&1&2&0&1\\
1&1&1&2&1&0
\end{smallmatrix}\right]\\
&A_{46}\left[\begin{smallmatrix}
0&1&2&3&1&1\\
1&0&1&2&2&1\\
2&1&0&1&1&1\\
3&2&1&0&2&2\\
1&2&1&2&0&2\\
1&1&1&2&2&0
\end{smallmatrix}\right]~
A_{47}\left[\begin{smallmatrix}
0&1&2&3&1&2\\
1&0&1&2&2&1\\
2&1&0&1&1&2\\
3&2&1&0&2&1\\
1&2&1&2&0&2\\
2&1&2&1&2&0
\end{smallmatrix}\right]~
A_{48}\left[\begin{smallmatrix}
0&1&2&3&1&2\\
1&0&1&2&2&1\\
2&1&0&1&1&2\\
3&2&1&0&2&1\\
1&2&1&2&0&3\\
2&1&2&1&3&0
\end{smallmatrix}\right]~
A_{49}\left[\begin{smallmatrix}
0&1&2&3&1&2\\
1&0&1&2&2&1\\
2&1&0&1&1&1\\
3&2&1&0&2&1\\
1&2&1&2&0&2\\
2&1&1&1&2&0
\end{smallmatrix}\right]~
A_{50}\left[\begin{smallmatrix}
0&1&2&3&1&2\\
1&0&1&2&1&1\\
2&1&0&1&1&1\\
3&2&1&0&2&1\\
1&1&1&2&0&2\\
2&1&1&1&2&0
\end{smallmatrix}\right]\\
&A_{51}\left[\begin{smallmatrix}
0&1&2&3&1&2\\
1&0&1&2&2&1\\
2&1&0&1&1&2\\
3&2&1&0&2&1\\
1&2&1&2&0&1\\
2&1&2&1&1&0
\end{smallmatrix}\right]
\end{aligned}
$$
\end{lem}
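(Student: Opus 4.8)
The plan is to follow the same strategy as in the proof of Lemma \ref{Lemma-3-1}, namely to rule out each forbidden configuration by an eigenvalue comparison, but with one essential change. Observe that every $A_i$ contains an entry equal to $3$, so it is the distance matrix of a configuration of diameter $3$ (indeed the top-left $4\times 4$ block of most of them is the distance matrix of $P_4$). Consequently the induced-subgraph interlacing of Lemma \ref{Lemma-2-5}, which requires diameter $<3$, is \emph{not} available here. Instead I would appeal to the Cauchy Interlace Theorem (Lemma \ref{Lemma-2-4}) \emph{directly} on $D(G)$: if $A_i$ (of order $m\in\{5,6\}$) occurs as a principal submatrix $D_G(X)$ of $D(G)$, then its eigenvalues interlace those of $D(G)$ regardless of the diameter of $G[X]$, since $D_G(X)$ records distances taken in the whole graph $G$.

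Writing $\mu_1(A_i)\ge\cdots\ge\mu_m(A_i)$ for the eigenvalues of $A_i$, I would extract from Lemma \ref{Lemma-2-4} (with $A=D(G)$ of order $n$ and $B=A_i$ of order $m$) the two inequalities that matter. From the top of the spectrum, $\partial_3(G)\ge\mu_3(A_i)$; from the bottom, taking $n-m+i=n-1$ (that is, $i=m-1$), one gets $\partial_{n-1}(G)\le\mu_{m-1}(A_i)$. Hence it suffices to verify, for each $i$, that \emph{either} $\mu_3(A_i)>-1$ \emph{or} $\mu_{m-1}(A_i)<-2$: in the first case $\partial_3(G)\ge\mu_3(A_i)>-1$ contradicts $\partial_3(G)\le-1$, and in the second $\partial_{n-1}(G)\le\mu_{m-1}(A_i)<-2$ contradicts $\partial_{n-1}(G)\ge-2$. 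Either way $A_i$ cannot be a principal submatrix of $D(G)$. (For orders $n<m$ the claim is vacuous, as no $m\times m$ principal submatrix exists.)

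What remains is purely computational: for each of the $51$ matrices one computes the relevant eigenvalue and checks it against the threshold $-1$ or $-2$. I would sort the matrices into the two groups flagged above and, to avoid floating-point issues, carry out the check exactly through inertia counts: $\mu_3(A_i)>-1$ holds precisely when $A_i+I$ has at least three positive eigenvalues, and $\mu_{m-1}(A_i)<-2$ holds precisely when $A_i+2I$ has at least two negative eigenvalues, each of which is decidable from the signs of leading principal minors. The main obstacle is therefore not conceptual but organizational: the sheer volume of $5\times 5$ and $6\times 6$ cases makes a clean, machine-checkable presentation (for instance a table listing the decisive eigenvalue $\mu_3(A_i)$ or $\mu_{m-1}(A_i)$ for every $i$) indispensable, and one must be careful to orient the interlacing correctly so that the third-largest eigenvalue of $A_i$ controls $\partial_3(G)$ while the second-smallest controls $\partial_{n-1}(G)$.
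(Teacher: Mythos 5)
Your proposal is correct and follows essentially the same route as the paper: apply the Cauchy Interlace Theorem (Lemma \ref{Lemma-2-4}) directly to $D(G)$ and the principal submatrix $A_i$, then verify case by case that each $A_i$ has third largest eigenvalue greater than $-1$ or second least eigenvalue less than $-2$, exactly as recorded in the paper's Tab.~\ref{Table-1}. Your additional observations (that Lemma \ref{Lemma-2-5} is unavailable because the configurations have diameter $3$, and that the numerical checks could be replaced by exact inertia counts) are sound refinements but do not change the argument.
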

\begin{proof}
According to Tab. \ref{Table-1}, each $A_i$ ($1\leq i\leq 51$) has third largest  eigenvalue greater than $-1$ or second least eigenvalue less than $-2$. Thus our result follows by Lemma \ref{Lemma-2-4}. 
\end{proof}
\begin{table}[t]
\small
\caption{The third largest or second least eigenvalues of $A_1$--$A_{51}$.}
\vspace{-0.5cm}
\begin{center}
\scriptsize
\begin{tabular}{cccccccc}
\hline
$A_i$&$\partial_3$ or $\partial_5$&$A_i$&$\partial_3$ or $\partial_5$&$A_i$&$\partial_3$ or $\partial_5$&$A_i$&$\partial_3$ or $\partial_5$\\
\hline
$A_1$&$\partial_3=-0.6557$&$A_2$&$\partial_3=-0.9321$&$A_3$&$\partial_3=-0.6286$&$A_4$&$\partial_3=-0.6012$\\
$A_5$&$\partial_3=-0.8365$&$A_6$&$\partial_5=-2.5294$&$A_7$&$\partial_5=-2.4413$&$A_8$&$\partial_5=-2.4413$\\
$A_9$&$\partial_5=-2.3224$&$A_{10}$&$\partial_3=-0.7666$&$A_{11}$&$\partial_3=-0.7520$&$A_{12}$&$\partial_3=-2.0671$\\
$A_{13}$&$\partial_3=-0.6851$&$A_{14}$&$\partial_5=-2.1099$&$A_{15}$&$\partial_5=-2.1725$&$A_{16}$&$\partial_5=-2.1099$\\
$A_{17}$&$\partial_5=-2.0898$&$A_{18}$&$\partial_3=-0.5714$&$A_{19}$&$\partial_5=-2.4413$&$A_{20}$&$\partial_3=-0.6851$\\
$A_{21}$&$\partial_5=-2.3224$&$A_{22}$&$\partial_5=-2.6712$&$A_{23}$&$\partial_5=-3.4142$&$A_{24}$&$\partial_5=-2.5829$\\
$A_{25}$&$\partial_5=-3.1708$&$A_{26}$&$\partial_5=-2.4216$&$A_{27}$&$\partial_5=-2.3862$&$A_{28}$&$\partial_3=-0.4353$\\
$A_{29}$&$\partial_3=-0.8401$&$A_{30}$&$\partial_3=-0.4523$&$A_{31}$&$\partial_3=-0.6010$&$A_{32}$&$\partial_3=-0.8303$\\
$A_{33}$&$\partial_3=-0.6712$&$A_{34}$&$\partial_3=-0.6535$&$A_{35}$&$\partial_3=-0.4679$&$A_{36}$&$\partial_5=-2.3391$\\
$A_{37}$&$\partial_5=-2.5829$&$A_{38}$&$\partial_5=-2.5829$&$A_{39}$&$\partial_5=-3.1708$&$A_{40}$&$\partial_3=-0.8636$\\
$A_{41}$&$\partial_3=-0.8401$&$A_{42}$&$\partial_3=-0.7720$&$A_{43}$&$\partial_5=-2.3770$&$A_{44}$&$\partial_3=-0.7465$\\
$A_{45}$&$\partial_3=-0.7465$&$A_{46}$&$\partial_5=-2.3770$&$A_{47}$&$\partial_3=-0.4607$&$A_{48}$&$\partial_3=0$\\
$A_{49}$&$\partial_3=-0.6535$&$A_{50}$&$\partial_3=-0.4679$&$A_{51}$&$\partial_3=-0.7720$&--&--\\
\hline
\end{tabular}
\end{center}
\label{Table-1}
\end{table}

Now we begin to prove the main result of this section.
\begin{pro}\label{Pro-3-1}
Let  $G$ be a connected graph on $n$ vertices with $\partial_3(G)\leq -1$ and $\partial_{n-1}(G)\geq -2$. Then one of the following occurs:
\begin{enumerate}[(1)]
\vspace{-0.2cm}
\item $d(G)\leq 2$ and $G\in\{I_i\mid 1\leq i\leq 7\}$, where $I_1=K_n$ ($n\geq 4$), $I_2=K_a\vee K_b^c$ ($a\geq 2,b\geq 1$), $I_3=K_a\vee (K_b\cup K_c)$ ($a,b,c\geq 2$), $I_4=K_a\vee (K_b\cup K_c^c)$ ($a,b\geq 2$, $c\geq 1$), $I_5=K_a^c\vee K_{b}^c$ ($a,b\geq 1$), $I_6=K_a^c\vee (K_b\cup K_{c})$ ($a\geq 1$, $b,c\geq 2$) and $I_7=K_a^c\vee (K_b\cup K_{c}^c)$ ($a,c\geq 1$, $b\geq 2$);
\vspace{-0.2cm}

\item $d(G)=3$ and $G\in\{J_i\mid 1\leq i\leq 8\}$, where $J_1=P_4[K_{a}^c,K_{b},K_{c}^c,K_{d}^c]$, $J_2=P_4[K_{a}^c,K_{b},K_{c},K_{d}^c]$, $J_3=P_4[K_{a}^c,K_{b},K_{c}^c,K_{d}]$, $J_4=P_4[K_{a}^c,K_{b}^c,K_{c},K_{d}]$, $J_5=P_4[K_{a}^c,K_{b},K_{c},K_{d}]$, $J_6=P_4[K_{a},K_{b},K_{c}^c,K_{d}]$ and $J_7=P_4[K_{a},K_{b},K_{c},K_{d}]$, where $a,b,c,d\geq 1$.
\end{enumerate}
\end{pro}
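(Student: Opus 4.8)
The plan is to argue by forbidden configurations, organised according to the diameter of $G$. Since both $\partial_3(G)\le -1$ and $\partial_{n-1}(G)\ge -2$ propagate to isometric subgraphs through Cauchy interlacing (Lemma \ref{Lemma-2-4}), the first step is to bound $d(G)$. If $d(G)\ge 4$, a diametral geodesic of length $4$ is an isometric $P_5$, so $D(P_5)$ is a principal submatrix of $D(G)$. A direct computation gives the distance spectrum of $P_5$ with $\partial_3(P_5)\approx -0.76>-1$; by interlacing $\partial_3(G)\ge\partial_3(P_5)>-1$, contradicting $\partial_3(G)\le -1$. Hence $d(G)\le 3$, and it remains to treat $d(G)\le 2$ and $d(G)=3$ separately.

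For $d(G)\le 2$ the idea is to show $G$ is $P_4$-free and then decompose it as a join. The key observation is that an induced $P_4=w_1w_2w_3w_4$ in a diameter-$2$ graph forces an external common neighbour $z$ of the endpoints $w_1,w_4$ (their $G$-distance must be $2$, yet they have no common neighbour among $\{w_2,w_3\}$). Running through the possible adjacencies of such a $z$ to $w_2,w_3$ produces, in every case, an induced copy of one of $F_1$--$F_7$, which is impossible by Lemma \ref{Lemma-3-1}. Thus $G$ contains no induced $P_4$, so by Lemma \ref{Lemma-2-10} it is a join $G\cong G_1\vee G_2$. I would then recurse on $G_1,G_2$ (again cographs) and use $\partial_3(G)\le -1$ together with Lemma \ref{Lemma-2-3} to control the number and type of the join factors: since $\partial_3\le -1$ allows at most two eigenvalues above $-1$, only the combinations listed as $I_1$--$I_7$ survive (here $I_1=K_n$ covers $d(G)=1$, with Lemma \ref{Lemma-2-6} pinning down the complete-graph case).

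The substantial case is $d(G)=3$, where the goal is to prove that $G$ is a generalized lexicographic product $P_4[H_1,H_2,H_3,H_4]$ with each $H_i$ either a clique $K_{p_i}$ or an independent set $K_{p_i}^c$. Fix a geodesic $w_1w_2w_3w_4$ realizing $d(G)=3$, so that $D_G(\{w_1,\dots,w_4\})=D(P_4)$ is a principal submatrix. Every other vertex $u$ has a distance vector $(d(u,w_1),\dots,d(u,w_4))\in\{1,2,3\}^4$, and adjoining $u$ to the geodesic yields a $5\times 5$ principal submatrix of $D(G)$. The first task is to show, using the forbidden $5\times 5$ matrices of Lemma \ref{Lemma-3-2}, that only the distance vectors matching ``$u$ lies in the $i$-th block'' can occur, i.e. each $u$ behaves distance-wise like a copy of some $w_i$. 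The second task is to analyse pairs: for vertices $u,v$ assigned to blocks $i,j$, adjoining $\{u,v\}$ gives a $6\times 6$ principal submatrix, and the forbidden $6\times 6$ matrices of Lemma \ref{Lemma-3-2} eliminate every mutual-distance and adjacency pattern except those consistent with the product, so that same-block vertices are mutual twins (a clique or an independent set with a common external neighbourhood, whence $H_i\in\{K_{p_i},K_{p_i}^c\}$, and by Lemma \ref{Lemma-2-7} they supply the eigenvalues $-1$ and $-2$), while cross-block adjacencies follow exactly the $P_4$ join pattern. This yields $G\cong P_4[H_1,H_2,H_3,H_4]$.

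Finally, I would decide which of these products satisfy both constraints. Using the distance equitable partition into the four blocks (Lemma \ref{Lemma-2-8}), the non-trivial part of the spectrum is governed by a $4\times 4$ distance divisor matrix, while the remaining eigenvalues are the $-1$'s and $-2$'s coming from the blocks; imposing $\partial_3(G)\le -1$ and $\partial_{n-1}(G)\ge -2$ then rules out the forbidden choices of which $H_i$ are complete and which are empty, leaving precisely $J_1$--$J_7$. The main obstacle throughout is the bookkeeping of the diameter-$3$ case: one must verify that every distance vector and every pair-configuration inconsistent with the product structure really does embed one of the fifty-one matrices of Lemma \ref{Lemma-3-2} (or one of $F_1$--$F_7$). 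This is a finite but delicate enumeration, and the conceptual crux is exactly that the global spectral constraints localize to these small forbidden principal submatrices, so that excluding all of them forces the lexicographic-product form.
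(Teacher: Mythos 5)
Your overall architecture coincides with the paper's: rule out $d(G)\ge 4$ by interlacing against $P_5$; for $d(G)\le 2$ show $P_4$-freeness via $F_1$--$F_3$ and decompose $G$ as a join; for $d(G)=3$ fix a diametral $P_4$, classify single vertices through $5\times 5$ principal submatrices and pairs through $6\times 6$ ones to force the structure $P_4[H_1,H_2,H_3,H_4]$ with each $H_i$ a clique or an independent set. The diameter-$3$ part of your sketch is a faithful outline of what the paper actually does (its Claims 2.1--2.3), and your closing paragraph about selecting admissible parameters via the distance divisor matrix is really the content of Theorem \ref{Thm-3-1} rather than of Proposition \ref{Pro-3-1}, which only asserts the structural classification.

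The genuine gap is in your endgame for $d(G)\le 2$. After writing $G\cong G_1\vee G_2$ you propose to finish by ``recursing on the cograph factors'' and invoking Lemma \ref{Lemma-2-3} together with the observation that $\partial_3\le -1$ permits at most two eigenvalues above $-1$. This mechanism cannot complete the classification. First, Lemma \ref{Lemma-2-3} characterizes equality in $\partial_{n-1}(G)\le -1$ and has no direct bearing on constraining the join factors here. Second, and more importantly, several of the necessary exclusions come from the \emph{other} hypothesis $\partial_{n-1}(G)\ge -2$, not from $\partial_3(G)\le -1$: for instance $(K_a\cup K_b)\vee(K_c\cup K_d)$ with $a,b,c,d\ge 2$ is a $P_4$-free join of disjoint unions of cliques that survives any count of eigenvalues above $-1$; the paper excludes it because it contains the induced subgraph $F_5$ with $\partial_5(F_5)<-2$. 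Likewise, when a join factor contains an induced $P_3$, one must first show the other factor is complete (via $F_6$) and then run a substantial structural analysis of the neighbourhoods relative to that $P_3$ (via $F_3$, $F_4$, $F_6$, $F_7$) to land on $I_1$--$I_7$; this occupies Subcase 1.2 of the paper's proof and is not replaceable by an eigenvalue-counting remark. Your toolkit ($F_1$--$F_7$ and Lemma \ref{Lemma-3-1}) is sufficient to carry this out, but the step as you describe it would not go through.
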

\begin{proof}
Let $d(G)$ be the diameter of $G$. If $d(G)\geq 4$, then $D(P_5)$ is a principal submatrix of  $D(G)$,  and so  
$-0.7639=\partial_3(P_5)\leq\partial_{3}(G)= -1$ by Lemma \ref{Lemma-2-4}, which is impossible. Now we consider the following two cases.

\vspace{0.2cm}

\noindent\textbf{Case 1.} $d(G)\leq 2$.

First of all, we prove that $G$ cannot contain $P_4$ as its induced subgraph. Suppose to the contrary that $P_4=v_1v_2v_3v_4$ is an induce subgraph of $G$. Then there exists some vertex $v\in V(G)$  which is adjacent to both $v_1$ and $v_4$ because $d_G(u,v)=2$ due to $d(G)\leq 2$. Thus at least one of $\{F_{1},F_{2},F_{3}\}$  is  the induce subgraph of $G$, which is impossible by Lemma \ref{Lemma-3-1}. Therefore, from Lemma \ref{Lemma-2-10}, the exist two non-null graphs $G_1$ and $G_2$ such that $G=G_1\vee G_2$. We only need to discuss the following two situations.

\vspace{0.2cm}

\noindent\textbf{Subcase 1.1.} Both $G_1$ and $G_2$ contain no induced $P_3$. 

Since $P_3$ is not an induced subgraph of $G_1$ and $G_2$, we claim that both $G_1$ and $G_2$ are the disjoint unions of some complete graphs. Further, if $G_1$ or $G_2$ contains $2K_2\cup K_1$ as its induced subgraph, then $G=G_1\vee G_2$ contains induced $F_4$, which is a contradiction by Lemma \ref{Lemma-3-1}. Thus, for $i=1,2$, we  conclude that $G_i$ is  one of the following graphs: $K_a$ ($a\geq 2$), $K_a^{c}$ ($a\geq 1$), $K_a\cup K_b$ ($a,b\geq 2$) and $K_a\cup K_b^c$ ($a\geq 2$, $b\geq 1$). Therefore, all the possible forms of $G$ are $I_1=K_n$ ($n\geq 4$), $I_2=K_a\vee K_b^c$ ($a\geq 2,b\geq 1$), $I_3=K_a\vee (K_b\cup K_c)$ ($a,b,c\geq 2$), $I_4=K_a\vee (K_b\cup K_c^c)$ ($a,b\geq 2$, $c\geq 1$), $I_5=K_a^c\vee K_{b}^c$ ($a,b\geq 1$), $I_6=K_a^c\vee (K_b\cup K_{c})$ ($a\geq 1$, $b,c\geq 2$), $I_7=K_a^c\vee (K_b\cup K_{c}^c)$ ($a,c\geq 1$, $b\geq 2$), $I_8=(K_{a}\cup K_{b})\vee (K_{c}\cup K_{d})$ ($a,b,c,d\geq 2$), $I_9=(K_{a}\cup K_{b})\vee (K_{c}\cup K_{d}^c)$ ($a,b,c\geq 2$, $d\geq 1$) and $I_{10}=(K_{a}\cup K_{b}^c)\vee (K_{c}\cup K_{d}^c)$ ($a,c\geq 2$, $b,d\geq 1$). By Lemma \ref{Lemma-3-1}, we know that $F_{5}$ cannot be an induced subgraph of $G$, which implies that  $G\not\in\{I_8,I_9,I_{10}\}$. Hence, we may conclude that $G\in\{I_i\mid 1\leq i\leq 7\}$ in this situation.
\vspace{0.2cm}

\noindent\textbf{Subcase 1.2.}  At least one of $G_1$ and $G_2$ contains induced $P_3$. 

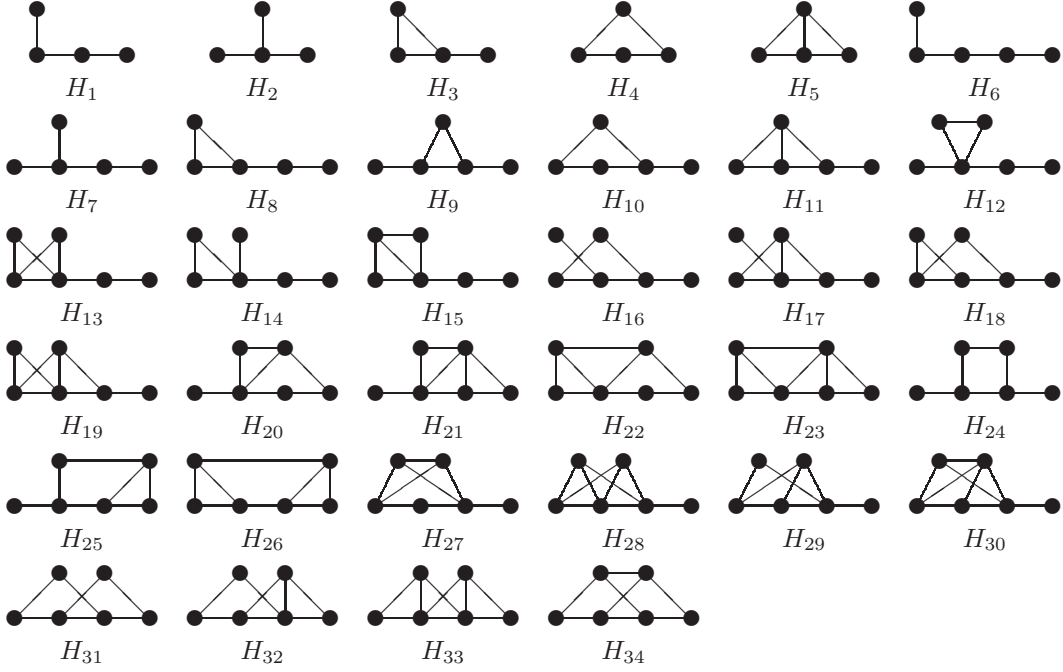
\begin{figure}[t]
\begin{center}
\unitlength 1.5mm 
\linethickness{0.4pt}
\ifx\plotpoint\undefined\newsavebox{\plotpoint}\fi 
\begin{picture}(94,56)(0,-2)
\put(3,51){\line(0,1){4}}
\put(3,51){\line(1,0){8}}
\put(19,51){\line(1,0){8}}
\put(35,51){\line(1,0){8}}
\put(51,51){\line(1,0){8}}
\put(67,51){\line(1,0){8}}
\put(23,51){\line(0,1){4}}
\put(35,51){\line(0,1){4}}
\put(35,55){\line(1,-1){4}}
\put(51,51){\line(1,1){4}}
\put(55,55){\line(1,-1){4}}
\put(67,51){\line(1,1){4}}
\put(71,55){\line(1,-1){4}}
\put(71,55){\line(0,-1){4}}
\put(81,51){\line(1,0){12}}
\put(81,55){\line(0,-1){4}}
\put(3,55){\circle*{1.5}}
\put(3,51){\circle*{1.5}}
\put(7,51){\circle*{1.5}}
\put(11,51){\circle*{1.5}}
\put(23,55){\circle*{1.5}}
\put(19,51){\circle*{1.5}}
\put(23,51){\circle*{1.5}}
\put(27,51){\circle*{1.5}}
\put(35,55){\circle*{1.5}}
\put(35,51){\circle*{1.5}}
\put(39,51){\circle*{1.5}}
\put(43,51){\circle*{1.5}}
\put(55,55){\circle*{1.5}}
\put(51,51){\circle*{1.5}}
\put(55,51){\circle*{1.5}}
\put(59,51){\circle*{1.5}}
\put(71,55){\circle*{1.5}}
\put(67,51){\circle*{1.5}}
\put(71,51){\circle*{1.5}}
\put(75,51){\circle*{1.5}}
\put(81,55){\circle*{1.5}}
\put(81,51){\circle*{1.5}}
\put(85,51){\circle*{1.5}}
\put(89,51){\circle*{1.5}}
\put(93,51){\circle*{1.5}}
\put(7,48){\makebox(0,0)[cc]{\footnotesize$H_1$}}
\put(23,48){\makebox(0,0)[cc]{\footnotesize$H_2$}}
\put(39,48){\makebox(0,0)[cc]{\footnotesize$H_3$}}
\put(55,48){\makebox(0,0)[cc]{\footnotesize$H_4$}}
\put(71,48){\makebox(0,0)[cc]{\footnotesize$H_5$}}
\put(87,48){\makebox(0,0)[cc]{\footnotesize$H_6$}}
\put(1,41){\line(1,0){12}}
\put(17,41){\line(1,0){12}}
\put(33,41){\line(1,0){12}}
\put(49,41){\line(1,0){12}}
\put(65,41){\line(1,0){12}}
\put(81,41){\line(1,0){12}}
\put(5,45){\line(0,-1){4}}
\put(17,45){\line(0,-1){4}}
\put(17,45){\line(1,-1){4}}
\multiput(37,41)(.03333333,.06666667){60}{\line(0,1){.06666667}}
\multiput(39,45)(.03333333,-.06666667){60}{\line(0,-1){.06666667}}
\put(49,41){\line(1,1){4}}
\put(53,45){\line(1,-1){4}}
\put(65,41){\line(1,1){4}}
\put(69,45){\line(0,-1){4}}
\put(69,45){\line(1,-1){4}}
\multiput(85,41)(-.03333333,.06666667){60}{\line(0,1){.06666667}}
\put(83,45){\line(1,0){4}}
\multiput(87,45)(-.03333333,-.06666667){60}{\line(0,-1){.06666667}}
\put(1,41){\circle*{1.5}}
\put(5,41){\circle*{1.5}}
\put(5,45){\circle*{1.5}}
\put(9,41){\circle*{1.5}}
\put(13,41){\circle*{1.5}}
\put(17,45){\circle*{1.5}}
\put(17,41){\circle*{1.5}}
\put(21,41){\circle*{1.5}}
\put(25,41){\circle*{1.5}}
\put(29,41){\circle*{1.5}}
\put(39,45){\circle*{1.5}}
\put(37,41){\circle*{1.5}}
\put(41,41){\circle*{1.5}}
\put(33,41){\circle*{1.5}}
\put(45,41){\circle*{1.5}}
\put(49,41){\circle*{1.5}}
\put(53,41){\circle*{1.5}}
\put(57,41){\circle*{1.5}}
\put(61,41){\circle*{1.5}}
\put(53,45){\circle*{1.5}}
\put(69,45){\circle*{1.5}}
\put(65,41){\circle*{1.5}}
\put(69,41){\circle*{1.5}}
\put(73,41){\circle*{1.5}}
\put(77,41){\circle*{1.5}}
\put(83,45){\circle*{1.5}}
\put(87,45){\circle*{1.5}}
\put(85,41){\circle*{1.5}}
\put(81,41){\circle*{1.5}}
\put(89,41){\circle*{1.5}}
\put(93,41){\circle*{1.5}}
\put(7,38){\makebox(0,0)[cc]{\footnotesize$H_7$}}
\put(23,38){\makebox(0,0)[cc]{\footnotesize$H_8$}}
\put(39,38){\makebox(0,0)[cc]{\footnotesize$H_9$}}
\put(55,38){\makebox(0,0)[cc]{\footnotesize$H_{10}$}}
\put(71,38){\makebox(0,0)[cc]{\footnotesize$H_{11}$}}
\put(87,38){\makebox(0,0)[cc]{\footnotesize$H_{12}$}}
\put(1,31){\line(1,0){12}}
\put(17,31){\line(1,0){12}}
\put(33,31){\line(1,0){12}}
\put(49,31){\line(1,0){12}}
\put(65,31){\line(1,0){12}}
\put(81,31){\line(1,0){12}}
\put(1,31){\line(0,1){4}}
\put(1,35){\line(1,-1){4}}
\put(5,31){\line(0,1){4}}
\put(5,35){\line(-1,-1){4}}
\put(17,31){\line(0,1){4}}
\put(17,35){\line(1,-1){4}}
\put(21,31){\line(0,1){4}}
\put(33,31){\line(0,1){4}}
\put(33,35){\line(1,-1){4}}
\put(37,31){\line(0,1){4}}
\put(37,35){\line(-1,0){4}}
\put(49,31){\line(1,1){4}}
\put(53,35){\line(1,-1){4}}
\put(53,31){\line(-1,1){4}}
\put(65,31){\line(1,1){4}}
\put(69,35){\line(1,-1){4}}
\put(69,35){\line(0,-1){4}}
\put(69,31){\line(-1,1){4}}
\put(81,35){\line(0,-1){4}}
\put(81,35){\line(1,-1){4}}
\put(81,31){\line(1,1){4}}
\put(85,35){\line(1,-1){4}}
\put(1,35){\circle*{1.5}}
\put(5,35){\circle*{1.5}}
\put(1,31){\circle*{1.5}}
\put(5,31){\circle*{1.5}}
\put(9,31){\circle*{1.5}}
\put(13,31){\circle*{1.5}}
\put(17,35){\circle*{1.5}}
\put(21,35){\circle*{1.5}}
\put(17,31){\circle*{1.5}}
\put(21,31){\circle*{1.5}}
\put(25,31){\circle*{1.5}}
\put(29,31){\circle*{1.5}}
\put(33,35){\circle*{1.5}}
\put(37,35){\circle*{1.5}}
\put(33,31){\circle*{1.5}}
\put(37,31){\circle*{1.5}}
\put(41,31){\circle*{1.5}}
\put(45,31){\circle*{1.5}}
\put(49,35){\circle*{1.5}}
\put(53,35){\circle*{1.5}}
\put(49,31){\circle*{1.5}}
\put(53,31){\circle*{1.5}}
\put(57,31){\circle*{1.5}}
\put(61,31){\circle*{1.5}}
\put(65,35){\circle*{1.5}}
\put(69,35){\circle*{1.5}}
\put(65,31){\circle*{1.5}}
\put(69,31){\circle*{1.5}}
\put(73,31){\circle*{1.5}}
\put(77,31){\circle*{1.5}}
\put(81,35){\circle*{1.5}}
\put(85,35){\circle*{1.5}}
\put(81,31){\circle*{1.5}}
\put(85,31){\circle*{1.5}}
\put(89,31){\circle*{1.5}}
\put(93,31){\circle*{1.5}}
\put(7,28){\makebox(0,0)[cc]{\footnotesize$H_{13}$}}
\put(23,28){\makebox(0,0)[cc]{\footnotesize$H_{14}$}}
\put(39,28){\makebox(0,0)[cc]{\footnotesize$H_{15}$}}
\put(55,28){\makebox(0,0)[cc]{\footnotesize$H_{16}$}}
\put(71,28){\makebox(0,0)[cc]{\footnotesize$H_{17}$}}
\put(87,28){\makebox(0,0)[cc]{\footnotesize$H_{18}$}}
\put(1,21){\line(1,0){12}}
\put(17,21){\line(1,0){12}}
\put(33,21){\line(1,0){12}}
\put(49,21){\line(1,0){12}}
\put(65,21){\line(1,0){12}}
\put(81,21){\line(1,0){12}}
\put(1,21){\line(0,1){4}}
\put(1,25){\line(1,-1){4}}
\put(5,21){\line(0,1){4}}
\put(5,25){\line(-1,-1){4}}
\put(5,25){\line(1,-1){4}}
\put(21,21){\line(0,1){4}}
\put(21,25){\line(1,0){4}}
\put(25,25){\line(-1,-1){4}}
\put(25,25){\line(1,-1){4}}
\put(37,21){\line(0,1){4}}
\put(37,25){\line(1,0){4}}
\put(41,25){\line(-1,-1){4}}
\put(41,25){\line(0,-1){4}}
\put(41,25){\line(1,-1){4}}
\put(49,25){\line(0,-1){4}}
\put(49,25){\line(1,-1){4}}
\put(53,21){\line(1,1){4}}
\put(57,25){\line(1,-1){4}}
\put(49,25){\line(1,0){8}}
\put(65,21){\line(0,1){4}}
\put(65,25){\line(1,-1){4}}
\put(69,21){\line(1,1){4}}
\put(73,25){\line(1,-1){4}}
\put(73,25){\line(0,-1){4}}
\put(65,25){\line(1,0){8}}
\put(85,21){\line(0,1){4}}
\put(85,25){\line(1,0){4}}
\put(89,25){\line(0,-1){4}}
\put(1,25){\circle*{1.5}}
\put(5,25){\circle*{1.5}}
\put(1,21){\circle*{1.5}}
\put(5,21){\circle*{1.5}}
\put(9,21){\circle*{1.5}}
\put(13,21){\circle*{1.5}}
\put(21,25){\circle*{1.5}}
\put(25,25){\circle*{1.5}}
\put(17,21){\circle*{1.5}}
\put(21,21){\circle*{1.5}}
\put(25,21){\circle*{1.5}}
\put(29,21){\circle*{1.5}}
\put(37,25){\circle*{1.5}}
\put(41,25){\circle*{1.5}}
\put(33,21){\circle*{1.5}}
\put(37,21){\circle*{1.5}}
\put(41,21){\circle*{1.5}}
\put(45,21){\circle*{1.5}}
\put(49,25){\circle*{1.5}}
\put(57,25){\circle*{1.5}}
\put(49,21){\circle*{1.5}}
\put(53,21){\circle*{1.5}}
\put(57,21){\circle*{1.5}}
\put(61,21){\circle*{1.5}}
\put(65,25){\circle*{1.5}}
\put(73,25){\circle*{1.5}}
\put(65,21){\circle*{1.5}}
\put(69,21){\circle*{1.5}}
\put(73,21){\circle*{1.5}}
\put(77,21){\circle*{1.5}}
\put(85,25){\circle*{1.5}}
\put(89,25){\circle*{1.5}}
\put(81,21){\circle*{1.5}}
\put(85,21){\circle*{1.5}}
\put(89,21){\circle*{1.5}}
\put(93,21){\circle*{1.5}}
\put(7,18){\makebox(0,0)[cc]{\footnotesize$H_{19}$}}
\put(23,18){\makebox(0,0)[cc]{\footnotesize$H_{20}$}}
\put(39,18){\makebox(0,0)[cc]{\footnotesize$H_{21}$}}
\put(55,18){\makebox(0,0)[cc]{\footnotesize$H_{22}$}}
\put(71,18){\makebox(0,0)[cc]{\footnotesize$H_{23}$}}
\put(87,18){\makebox(0,0)[cc]{\footnotesize$H_{24}$}}
\put(1,11){\line(1,0){12}}
\put(17,11){\line(1,0){12}}
\put(33,11){\line(1,0){12}}
\put(49,11){\line(1,0){12}}
\put(65,11){\line(1,0){12}}
\put(81,11){\line(1,0){12}}
\put(5,11){\line(0,1){4}}
\put(5,15){\line(1,0){8}}
\put(13,15){\line(0,-1){4}}
\put(9,11){\line(1,1){4}}
\multiput(33,11)(.03333333,.06666667){60}{\line(0,1){.06666667}}
\put(35,15){\line(3,-2){6}}
\multiput(39,15)(.03333333,-.06666667){60}{\line(0,-1){.06666667}}
\put(33,11){\line(3,2){6}}
\put(35,15){\line(1,0){4}}
\put(17,11){\line(0,1){4}}
\put(17,15){\line(1,-1){4}}
\put(29,15){\line(0,-1){4}}
\put(29,15){\line(-1,-1){4}}
\put(17,15){\line(1,0){12}}
\multiput(49,11)(.03333333,.06666667){60}{\line(0,1){.06666667}}
\multiput(51,15)(.03333333,-.06666667){60}{\line(0,-1){.06666667}}
\put(51,15){\line(3,-2){6}}
\multiput(57,11)(-.03333333,.06666667){60}{\line(0,1){.06666667}}
\multiput(55,15)(-.03333333,-.06666667){60}{\line(0,-1){.06666667}}
\put(55,15){\line(-3,-2){6}}
\multiput(67,15)(-.03333333,-.06666667){60}{\line(0,-1){.06666667}}
\put(67,15){\line(3,-2){6}}
\multiput(73,11)(-.03333333,.06666667){60}{\line(0,1){.06666667}}
\multiput(71,15)(-.03333333,-.06666667){60}{\line(0,-1){.06666667}}
\put(65,11){\line(3,2){6}}
\multiput(81,11)(.03333333,.06666667){60}{\line(0,1){.06666667}}
\put(83,15){\line(3,-2){6}}
\multiput(89,11)(-.03333333,.06666667){60}{\line(0,1){.06666667}}
\put(87,15){\line(-3,-2){6}}
\multiput(87,15)(-.03333333,-.06666667){60}{\line(0,-1){.06666667}}
\put(83,15){\line(1,0){4}}
\put(5,15){\circle*{1.5}}
\put(13,15){\circle*{1.5}}
\put(1,11){\circle*{1.5}}
\put(5,11){\circle*{1.5}}
\put(9,11){\circle*{1.5}}
\put(13,11){\circle*{1.5}}
\put(17,15){\circle*{1.5}}
\put(29,15){\circle*{1.5}}
\put(17,11){\circle*{1.5}}
\put(21,11){\circle*{1.5}}
\put(25,11){\circle*{1.5}}
\put(29,11){\circle*{1.5}}
\put(35,15){\circle*{1.5}}
\put(39,15){\circle*{1.5}}
\put(33,11){\circle*{1.5}}
\put(37,11){\circle*{1.5}}
\put(41,11){\circle*{1.5}}
\put(45,11){\circle*{1.5}}
\put(51,15){\circle*{1.5}}
\put(55,15){\circle*{1.5}}
\put(49,11){\circle*{1.5}}
\put(53,11){\circle*{1.5}}
\put(57,11){\circle*{1.5}}
\put(61,11){\circle*{1.5}}
\put(67,15){\circle*{1.5}}
\put(71,15){\circle*{1.5}}
\put(65,11){\circle*{1.5}}
\put(69,11){\circle*{1.5}}
\put(73,11){\circle*{1.5}}
\put(77,11){\circle*{1.5}}
\put(83,15){\circle*{1.5}}
\put(87,15){\circle*{1.5}}
\put(81,11){\circle*{1.5}}
\put(85,11){\circle*{1.5}}
\put(89,11){\circle*{1.5}}
\put(93,11){\circle*{1.5}}
\put(7,8){\makebox(0,0)[cc]{\footnotesize$H_{25}$}}
\put(23,8){\makebox(0,0)[cc]{\footnotesize$H_{26}$}}
\put(39,8){\makebox(0,0)[cc]{\footnotesize$H_{27}$}}
\put(55,8){\makebox(0,0)[cc]{\footnotesize$H_{28}$}}
\put(71,8){\makebox(0,0)[cc]{\footnotesize$H_{29}$}}
\put(87,8){\makebox(0,0)[cc]{\footnotesize$H_{30}$}}
\put(1,1){\line(1,0){12}}
\put(17,1){\line(1,0){12}}
\put(33,1){\line(1,0){12}}
\put(49,1){\line(1,0){12}}
\put(1,1){\line(1,1){4}}
\put(5,5){\line(1,-1){4}}
\put(5,1){\line(1,1){4}}
\put(9,5){\line(1,-1){4}}
\put(17,1){\line(1,1){4}}
\put(21,5){\line(1,-1){4}}
\put(25,1){\line(0,1){4}}
\put(25,5){\line(-1,-1){4}}
\put(25,5){\line(1,-1){4}}
\put(33,1){\line(1,1){4}}
\put(37,5){\line(0,-1){4}}
\put(37,1){\line(1,1){4}}
\put(41,5){\line(0,-1){4}}
\put(37,5){\line(1,-1){4}}
\put(41,5){\line(1,-1){4}}
\put(49,1){\line(1,1){4}}
\put(53,5){\line(1,-1){4}}
\put(53,1){\line(1,1){4}}
\put(57,5){\line(1,-1){4}}
\put(53,5){\line(1,0){4}}
\put(5,5){\circle*{1.5}}
\put(9,5){\circle*{1.5}}
\put(1,1){\circle*{1.5}}
\put(5,1){\circle*{1.5}}
\put(9,1){\circle*{1.5}}
\put(13,1){\circle*{1.5}}
\put(21,5){\circle*{1.5}}
\put(25,5){\circle*{1.5}}
\put(17,1){\circle*{1.5}}
\put(21,1){\circle*{1.5}}
\put(25,1){\circle*{1.5}}
\put(29,1){\circle*{1.5}}
\put(37,5){\circle*{1.5}}
\put(41,5){\circle*{1.5}}
\put(33,1){\circle*{1.5}}
\put(37,1){\circle*{1.5}}
\put(41,1){\circle*{1.5}}
\put(45,1){\circle*{1.5}}
\put(53,5){\circle*{1.5}}
\put(57,5){\circle*{1.5}}
\put(49,1){\circle*{1.5}}
\put(53,1){\circle*{1.5}}
\put(57,1){\circle*{1.5}}
\put(61,1){\circle*{1.5}}
\put(7,-2){\makebox(0,0)[cc]{\footnotesize$H_{31}$}}
\put(23,-2){\makebox(0,0)[cc]{\footnotesize$H_{32}$}}
\put(39,-2){\makebox(0,0)[cc]{\footnotesize$H_{33}$}}
\put(55,-2){\makebox(0,0)[cc]{\footnotesize$H_{34}$}}
\end{picture}
\caption{\small The graphs $H_1$--$H_{34}$.}
\label{Figure-3}
\end{center}
\end{figure}

Without loss of generality, we assume that $G_1$ contains induced $H=P_3=v_1v_2v_3$. Then $G_2$  contains no induced $2K_1$ because $F_6=P_3\vee(2K_1)$ cannot be the induced subgraph of $G$ by Lemma \ref{Lemma-3-1}. This implies that $G_2$ is a complete graph $K_a$ ($a\geq 1$).

Now consider the structure of $G_1$. For any vertex $v\in V(G_1)\setminus V(H)$, we claim that $v$ is adjacent to at least one vertex of $H$, since otherwise $F_7$ will be an induced subgraph of $G$, which is impossible by  Lemma \ref{Lemma-3-1}. Thus, for any $v\in V(G_1)\setminus V(H)$, we have $G_1[\{v_1,v_2,v_3,v\}]\in\{H_1,H_2,H_3,H_4,H_5\}$. Obviously, $G_1[\{v_1,v_2,v_3,v\}]\neq H_1$ because $G$ contains no induced $P_4$. Furthermore, we see that $G_1[\{v_1,v_2,v_3,v\}]\neq H_4$ because $G$ cannot contain $F_6$ as its induced subgraph. Thus $G_1[\{v_1,v_2,v_3,v\}]\in\{H_2,H_3,H_5\}$ and we have the following claim.

\vspace{0.2cm}

\noindent\textbf{Claim 1.1.} For any $v\in V(G_1)\setminus V(H)$, $N_{G_1}(v)\cap V(H)=\{v_2\}$, $\{v_1,v_2\}$, $\{v_2,v_3\}$ or $\{v_1,v_2,v_3\}$.

Denote by $V_1$, $V_2$, $V_3$ and $V_4$ the sets of $v\in V(G_1)\setminus V(H)$ such that  $N_{G_1}(v)\cap V(H)=\{v_2\}$, $\{v_1,v_2\}$, $\{v_1,v_2,v_3\}$ and $\{v_2,v_3\}$, respectively. Then $V(G_1)\setminus V(H)=V_1\cup V_2\cup V_3\cup V_4$. Now we begin to analyse the structure of $G_1$.

If $G_1[V_{1}]$ contains an induced $P_3=u_1u_2u_3$, then $G_1[\{v_1,v_2,u_1,u_2,u_3\}]=F_{7}$, which is impossible by Lemma \ref{Lemma-3-1}.  This implies that $G_1[V_{1}]$ is the disjoint union of some complete graphs. Moreover, we see that $G_1[V_{1}]$ contains no induced $2K_2$ because $F_{4}$ is not an induced subgraph of $G$. Therefore, if $V_1\neq\emptyset$ then  $G_1[V_{1}]$ is one of the following graphs: $K_a$ ($a\geq 2$), $K_a^{c}$ ($a\geq 1$) and $K_a\cup K_b^c$ ($a\geq 2$, $b\geq 1$).  

For any $u,v\in V_2$, if $u$ and $v$ are not adjacent, then $G_{1}[v_1,v_2,v_3,u,v]=F_{7}$, a contradiction. Thus $G_{1}[V_2]$ (if $V_2\neq \emptyset$) is a complete graph, and so is $G_1[V_4]$ by the symmetry. Similarly, we see that $G_{1}[V_3]$ (if $V_3\neq \emptyset$) is also a complete graph because  $F_6$ cannot be the induced subgraph of $G$.

For any $u\in V_1$ and $v\in V_2$ (resp. $v\in V_4$), if $u$ and $v$ are adjacent, then $G_1[\{v_1,v_2,v_3,u,v\}]=F_{7}$, a contradiction. Thus there are no edges connecting $V_{1}$ and $V_{2}\cup V_{4}$. Moreover, every vertex of $V_{1}$ is adjacent to every vertex of $V_{3}$ again because $F_{7}$ cannot be the induced subgraph of $G$.

For any $u\in V_2$ (resp. $u\in V_4$) and $v\in V_3$, if $u,v$ are not adjacent, then $G_1[\{v_1,v_2,v_3,u,v\}]=F_{3}$, which is a contradiction. Thus  every vertex of $V_{2}\cup V_4$ is adjacent to every vertex of $V_{3}$. Moreover, we claim that there are no edges connecting $V_2$ and $V_4$ again because $G$ contains no induced $F_3$. 
 
By the definition of $V_i$ ($1\leq i\leq 4$), we see that $v_1$ is adjacent to every vertex of $V_{2}\cup V_{3}$ but none of $V_{1}\cup V_4$,  $v_2$ is adjacent to every vertex of $V_{1}\cup V_2\cup V_3\cup V_4$, and $v_3$ is adjacent to every vertex of $V_{3}\cup V_{4}$ but none of $V_{1}\cup V_2$. Put $V_2'=V_2\cup \{v_1\}$, $V_3'=V_3\cup \{v_2\}$ and $V_4'=V_4\cup \{v_3\}$. Then $V(G_1)=V_1\cup V_2'\cup V_3'\cup V_4'$. 

Summarizing  above results, we  see that  $G_{1}[V_1]$ (if $V_1\neq\emptyset$) is of the from $K_a$ ($a\geq 2$), $K_a^{c}$ ($a\geq 1$) or $K_a\cup K_b^c$ ($a\geq 2$, $b\geq 1$), and $G_{1}[V_i']$ ($|V_i'|\geq 1$) is a complete graph for $i=2,3,4$; every vertex of $V_3'$ is adjacent to every vertex of $V_1\cup V_2'\cup V_4'$ and there are no edges connecting $V_1$, $V_2'$ and $V_4'$. Therefore, $G_1=G_1[V_3']\vee G_1[V_1\cup V_2'\cup V_4']$ is of one form listed below: $K_a\vee(K_b\cup K_c)$ with $a,b,c\geq 1$, $K_a\vee (K_b\cup K_c\cup K_d)$ with $a,c,d\geq 1$ and $b\geq 2$, $K_a\vee (K_b^c\cup K_c\cup K_d)$ with $a,b,c,d\geq 1$ or $K_a\vee (K_b\cup K_c^c\cup K_d\cup K_e)$ with $a,c,d,e\geq 1$ and $b\geq 2$. Considering that $G$ (and so $G_1$) cannot contain $F_4$ as its induced subgraph, we have $G_1=K_a\vee (K_b \cup K_c)$ ($a,b,c\geq 1$),  $K_a\vee (K_b \cup K_c^c)$ ($a\geq 1$, $b,c\geq 2$) or $K_a\vee K_b^c$ ($a\geq 1$, $b\geq 3$). Recalling that $G_2$ is a complete graph and $G=G_1\vee G_2$, we obtain $G=K_a\vee (K_b \cup K_c)$ ($a\geq 2$, $b,c\geq 1$),  $K_a\vee (K_b \cup K_c^c)$ ($a\geq 2$, $b,c\geq 2$) or $K_a\vee K_b^c$ ($a\geq 2$, $b\geq 3$). Thus we also have $G\in\{I_i\mid 1\leq i\leq 7\}$.

\vspace{0.2cm}

\noindent\textbf{Case 2.} $d(G)=3$.

Let $H=P_4=v_1v_2v_3v_4$ be a diameter path of $G$. Then $H$ is an induced subgraph of $G$ and $D(H)=D_G(\{v_1,v_2,v_3,v_4\})$ is a principal submatrix of $D(G)$. Firstly, we have the following claim.

\vspace{0.2cm}

\noindent\textbf{Claim 2.1.}  $d(v,H)=1$ for any  $v\in V(G)\setminus V(H)$.

If not, we have  $2\leq d(v,H)\leq 3$ since $d(G)=3$. Let $d_i=d(v,v_i)$ for $i=1,2,3,4$. Then $d_i\in\{2,3\}$ for each $i$, and the  principal submatrix of $G$ corresponding to $G[\{v_1,v_2,v_3,v_4,v\}]$ is of the form
$$
D_G(\{v_1,v_2,v_3,v_4,v\})=\left[\begin{matrix}
0&1&2&3&d_1\\
1&0&1&2&d_2\\
2&1&0&1&d_3\\
3&2&1&0&d_4\\
d_1&d_2&d_3&d_4&0
\end{matrix}\right].
$$
In Tab. \ref{Table-2}, we list the possible values of the second least eigenvalue of $D_G(\{v_1,v_2,v_3,$ $v_4,v\})$, which are all less than $-2$. Then from Lemma \ref{Lemma-2-4} we get $\partial_{n-1}(G)\leq \partial_4(D_G(\{v_1,v_2,v_3,v_4,v\})<-2$, contrary to $\partial_{n-1}(G)\geq-2$. Hence, each vertex in $V(G)\setminus V(H)$ must be adjacent to at least one vertex of $H$.

\begin{table}[t]
\small
\caption{The second least eigenvalue of $D_G(\{v_1,v_2,$ $v_3,v_4,v\})$.}
\vspace{-0.5cm}
\begin{center}
\scriptsize
\begin{tabular}{cccccccc}
\hline
$(d_1,d_2,d_3,d_4)$&$\partial_4$&$(d_1,d_2,d_3,d_4)$&$\partial_4$&$(d_1,d_2,d_3,d_4)$&$\partial_4$&$(d_1,d_2,d_3,d_4)$&$\partial_4$\\
\hline
$(2,2,2,2)$&$-2.3956$&$(2,2,2,3)$&$-2.3810$&$(2,2,3,2)$&$-3.0586$&$(2,2,3,3)$&$-2.6028$\\
$(2,3,2,2)$&$-3.0586$&$(2,3,2,3)$&$-3.1163$&$(2,3,3,2)$&$-3.4142$&$(2,3,3,3)$&$-3.1014$\\
$(3,2,2,2)$&$-2.3810$&$(3,2,2,3)$&$-3.1436$&$(3,2,3,2)$&$-3.1163$&$(3,2,3,3)$&$-3.2798$\\
$(3,3,2,2)$&$-2.6028$&$(3,3,2,3)$&$-3.2798$&$(3,3,3,2)$&$-3.1014$&$(3,3,3,3)$&$-3.4142$\\
\hline
\end{tabular}
\end{center}
\label{Table-2}
\end{table}

Note that $d(v_1,v_4)=3$.  From Claim 2.1 and  the symmetry  of $v_1$ and $v_4$ (resp. $v_2$ and $v_3$), for any $v\in V(G)\setminus V(H)$,  we can suppose $G[\{v_1,v_2,v_3,v_4,v\}]\in\{H_6,H_7,H_8,H_9,H_{10},H_{11}\}$ (see Fig. \ref{Figure-3}). If $G[\{v_1,v_2,v_3,v_4,v\}]=H_6$, then $d(v,v_1)=1$, $d(v,v_2)=2$, and $d(v,v_3),d(v,v_4)\in\{2,3\}$. Thus $D(G)$ has $D_G(\{v_1,v_2,v_3,v_4,v\})\in\{A_1,A_2,A_3,A_4\}$ as its  principal submatrix, which is impossible by Lemma  \ref{Lemma-3-2}.  Similarly, if $G[\{v_1,v_2,v_3,v_4,v\}]=H_{9}$, then the corresponding  principal submatrix is given by $D_G(\{v_1,v_2,v_3,v_4,v\})=A_5$, a contradiction. Hence, $G[\{v_1,v_2,v_3,v_4,v\}]\in\{H_7,H_8,H_{10},H_{11}\}$ for any $v\in V(G)\setminus V(H)$. Again by considering the symmetry  of $v_1$ and $v_4$ (resp. $v_2$ and $v_3$), we have the following claim.

\vspace{0.2cm}

\noindent\textbf{Claim 2.2.}  For any $v\in V(G)\setminus V(H)$, $N_G(v)\cap V(H)=\{v_2\}$, $\{v_3\}$, $\{v_1,v_2\}$, $\{v_3,v_4\}$, $\{v_1,v_3\}$, $\{v_2,v_4\}$, $\{v_1,v_2,v_3\}$ or $\{v_2,v_3,v_4\}$.

Denote by $V_{11}$, $V_{12}$, $V_{21}$, $V_{22}$, $V_{31}$, $V_{32}$, $V_{41}$ and $V_{42}$ the sets of $v\in V(G)\setminus V(H)$ such that  $N_G(v)\cap V(H)=\{v_2\}$, $\{v_1,v_2\}$, $\{v_1,v_3\}$, $\{v_1,v_2,v_3\}$, $\{v_2,v_4\}$, $\{v_2,v_3,v_4\}$, $\{v_3\}$ and $\{v_3,v_4\}$, respectively. Let $V_i=V_{i1}\cup V_{i2}$ for $i=1,2,3,4$. Then  $V(G)\setminus V(H)=V_1\cup V_2\cup V_3\cup V_4$.

For any $u,v\in V_{11}$, if $u$  and $v$ are adjacent, then $G[\{v_1,v_2,v_3,v_4,u,v\}]=H_{12}$ (see Fig. \ref{Figure-3}), and the corresponding principal submatrix $D_G(\{v_1,v_2,v_3,v_4,u,v\})$ belongs to $\{A_6,A_{7},A_{8},A_{9}\}$ because $d(u,v_1)=d(v,v_1)=d(u,v_3)=d(v,v_3)=2$, $d(u,v_2)=d(v,v_2)=1$, $d(u,v_4),d(v,v_4)\in \{2,3\}$ and $d(u,v)=1$, which contradicts Lemma \ref{Lemma-2-8}. Thus  $V_{11}$ is an independent set, and so is $V_{41}$ by the symmetry. Similarly, if $u,v\in V_{12}$ are not adjacent, then $G[\{v_1,v_2,v_3,v_4,u,v\}]=H_{13}$ and the corresponding principal submatrix $D_G(\{v_1,v_2,v_3,v_4,u,v\})$ belongs to $\{A_{10},A_{11},A_{12},A_{13}\}$, implying that $V_{12}$ is a clique and so is $V_{42}$.  Furthermore, if neither $V_{11}$ nor $V_{12}$ is empty, then $H_{14}$ or $H_{15}$ is an induced subgraph of $G$, and the  corresponding principal submatrix is one of  $\{A_i\mid 14\leq i\leq 21\}$, a contradiction. Thus  at least one of  $V_{11}$ and $V_{12}$ (resp.  $V_{41}$ and $V_{42}$ by the symmetry) is empty.

For any $u\in V_{11}$ and $v\in V_{21}$, if $u$ and $v$ are not adjacent, then $G[\{v_1,v_2,v_3,v_4,u,$ $v\}]=H_{16}$ and $D_G(\{v_1,v_2,v_3,v_4,u,v\})\in\{A_{22},A_{23},A_{24},A_{25}\}$, which is impossible and so each vertex of $V_{11}$ is adjacent to every vertex of $V_{12}$.  Similarly, if $u\in V_{11}$ and  $v\in V_{22}$ are not adjacent, then $G[\{v_1,v_2,v_3,v_4,u,v\}]=H_{17}$ and $D_G(\{v_1,v_2,v_3,v_4,u,v\})=\{A_{26},A_{27}\}$; if $u\in V_{12}$ and $v\in V_{21}$ are not adjacent, then $G[\{v_1,v_2,v_3,v_4,u,v\}]=H_{18}$ and $D_G(\{v_1,v_2,v_3,v_4,u,v\})\in\{A_{28},A_{29}\}$; if $u\in V_{12}$ and $v\in V_{22}$ are not adjacent, then $G[\{v_1,v_2,v_3,v_4,u,v\}]=H_{19}$ and $D_G(\{v_1,v_2,v_3,v_4,$ $u,v\})\in\{A_{30},A_{31}\}$. Thus all these cases are impossible, and we conclude that every vertex of $V_{1}=V_{11}\cup V_{12}$  is adjacent to every vertex of $V_2=V_{21}\cup V_{22}$, and by the symmetry, every vertex of $V_{4}=V_{41}\cup V_{42}$  is adjacent to every vertex of $V_3=V_{31}\cup V_{32}$. 

As above, if $u\in V_1=V_{11}\cup V_{12}$ and $v\in V_3=V_{31}\cup V_{32}$  are adjacent, then $G[\{v_1,v_2,v_3,v_4,u,v\}]\in\{H_{20},H_{21},H_{22},H_{23}\}$ and $D_G(\{v_1,v_2,v_3,v_4,u,v\})\in\{A_{32},$ $A_{33},A_{34},A_{35}\}$;  if $u\in V_1=V_{11}\cup V_{12}$ and $v\in V_4=V_{41}\cup V_{42}$  are adjacent, then $G[\{v_1,v_2,v_3,v_4,u,v\}]\in\{H_{24},H_{25},H_{26}\}$ and $D_G(\{v_1,v_2,v_3,v_4,u,v\})\in\{A_i\mid 36\leq i\leq 41\}$. Therefore, there are no edges in $G$ connecting $V_1$ and $V_3\cup V_4$, and symmetrically, there  are no edges connecting $V_4$ and $V_2\cup V_3$.

For any $u,v\in V_{21}$, if $u$ and $v$ are adjacent, then $G[\{v_1,v_2,v_3,v_4,u,v\}]=H_{27}$, and the corresponding principal submatrix is $D_G(\{v_1,v_2,v_3,v_4,u,v\})=A_{43}$, a contradiction. Thus  $V_{21}$ is an independent set and so is $V_{31}$ by the symmetry. Similarly, if $u,v\in V_{22}$ are not adjacent, then $G[\{v_1,v_2,v_3,v_4,u,v\}]=H_{28}$ and the corresponding principal submatrix $D_G(\{v_1,v_2,v_3,v_4,u,v\})$ is equal to $A_{44}$, which implies that $V_{22}$ is a clique and so is $V_{32}$. Furthermore, if neither $V_{21}$ nor $V_{22}$ is empty, then $H_{29}$ or $H_{30}$ is an induced subgraph of $G$, and the  corresponding principal submatrix is $A_{45}$ or $A_{46}$. Thus  at least one of  $V_{21}$ and $V_{22}$ (resp.  $V_{31}$ and $V_{32}$ by the symmetry) is empty.

Also, if $u\in V_2=V_{21}\cup V_{22}$ and $v\in V_3=V_{31}\cup V_{32}$  are not adjacent, then $G[\{v_1,v_2,v_3,v_4,u,v\}]\in\{H_{31},H_{32},H_{33}\}$ and $D_G(\{v_1,v_2,v_3,v_4,u,v\})\in\{A_{47},A_{48},$ $A_{49},A_{50}\}$, which implies that every vertex of $V_2$ is adjacent to every vertex of $V_3$. Moreover, we claim that either $V_{21}$ or $V_{31}$ is empty, since otherwise $H_{34}$ will be an induced subgraph of $G$ and the corresponding principal submatrix is $A_{51}$, a contradiction.

Summarizing above results, we have the following claim.

\vspace{0.2cm}

\noindent\textbf{Claim 2.3.}  The graph $G$ has the  properties P1--P4:
\begin{enumerate}[(P1)]
\item  every vertex of $V_2$ is adjacent to every vertex of $V_1$ and $V_3$, and  every vertex of $V_3$ is adjacent to every vertex of $V_2$ and $V_4$; 
\vspace{-0.2cm}
\item  there are no edges connecting $V_1$ and $V_3\cup V_4$, and   $V_2$ and $V_4$; 
\vspace{-0.2cm}
\item  for each $1\leq i\leq 4$, $V_{i1}=\emptyset$ or $V_{i2}=\emptyset$, and if $V_{i1}\neq \emptyset$ (resp. $V_{i2}\neq \emptyset$) then $V_{i1}$ (resp. $V_{i2}$) is an independent set (resp. clique);
\vspace{-0.2cm}
\item  $V_{21}=\emptyset$ or $V_{31}=\emptyset$.
\end{enumerate}
 
Not put $V_i'=V_i\cup\{v_i\}$ for $i=1,2,3,4$. Then $V(G)=V_1'\cup V_2'\cup V_3' \cup V_4'$.  From the definition of $V_i=V_{i1}\cup V_{i2}$ and $V_i'$ ($1\leq i\leq 4$), we see that $v_i$ is adjacent to every vertex of $V_{i2}$ but none of $V_{i1}$, $v_1$ (resp. $v_4$) is adjacent to  every vertex of $V_2'$ (resp. $V_3'$), $v_2$  (resp.  $v_3$) is adjacent to  every vertex of $V_1'\cup V_3'$ (resp. $V_2'\cup V_4'$). Combining this with Claim 1.3, we may conclude that $G=P_4[G_1,G_2,G_3,G_4]$, where $G_i=G[V_i']$ is a complete graph or a union of some isolated vertices for  $1\leq i\leq 4$, and $G_2, G_3$ cannot be the union of some isolated vertices at the same time if $|V_2'|,|V_3'|\geq 2$.  By the symmetry of $V_1'$ and $V_4'$ (resp. $V_2'$ and $V_3'$), without loss of generality, we can suppose that $G$ is one of the following graphs:
$J_1=P_4[K_{a}^c,K_{b},K_{c}^c,K_{d}^c]$, $J_2=P_4[K_{a}^c,K_{b},K_{c},K_{d}^c]$, $J_3=P_4[K_{a}^c,K_{b},K_{c}^c,K_{d}]$, $J_4=P_4[K_{a}^c,K_{b}^c,K_{c},K_{d}]$, $J_5=P_4[K_{a}^c,K_{b},K_{c},K_{d}]$, $J_6=P_4[K_{a},K_{b},K_{c}^c,K_{d}]$ and $J_7=P_4[K_{a},K_{b},K_{c},K_{d}]$, where $a,b,c,d\geq 1$. 

We complete the proof.
\end{proof}
\renewcommand{\arraystretch}{1.0}
\begin{table}[t]
\caption{The $D$-polynomials of $I_1$--$I_7$ and $J_1$--$J_7$.}
\centering
\scriptsize
\begin{tabular}{ll}
 \hline
$G$ &$\Phi_G(x)$\\
 \hline
$I_1=K_n$ ($n\geq 4$) &$(x\!-\!n\!+\!1)(x\!+\!1)^{n\!-\!1}$;\\
$I_2=K_a\vee K_b^c$ ($a\geq 2,b\geq 1$) &$(x\!+\!1)^{a\!-\!1}(x\!+\!2)^{b\!-\!1}[x^2 \!+\! (3 \!-\! 2b \!-\! a)x \!-\! 2a \!-\! 2b \!+\! ab \!+\! 2]$;\\
$I_3=K_a\vee (K_b\cup K_c)$ ($a,b,c\geq 2$)&\tabincell{l}{$(x\!+\!1)^{a\!+\!b\!+\!c\!-\!3}[x^3 \!+\! (3 \!-\! b \!-\! c \!-\! a)x^2 \!+\! (3 \!-\! 2b \!-\! 2c \!-\! 3bc \!-\! 2a)x \!-\! a \!-\! b $\\ $-\!c\!-\! 3bc \!+\! abc \!+\! 1]$;}\\
$I_4=K_a\vee (K_b\cup K_c^c)$ ($a,b\geq 2$, $c\geq 1$)&\tabincell{l}{$(x\!+\!1)^{a\!+\!b\!-\!2}(x\!+\!2)^{c\!-\!1}[x^3 \!+\! (4 \!-\! b \!-\! 2c \!-\! a)x^2 \!+\! (ac \!-\! 3b \!-\! 4c \!-\! 3a \!-\! 2bc$\\ $+\! 5)x\!-\! 2a \!-\! 2b \!-\! 2c \!+\! ac\!-\! 2bc \!+\! abc \!+\! 2]$;}\\
$I_5=K_a^c\vee K_{b}^c$ ($a,b\geq 1$)&$(x\!+\!2)^{a\!+\!b\!-\!2}[x^2 \!+\! (4 \!-\! 2b \!-\! 2a)x \!-\! 4a \!-\! 4b \!+\! 3ab \!+\! 4]$;\\
$I_6=K_a^c\vee (K_b\cup K_{c})$  ($a\geq 1$, $b,c\geq 2$)&\tabincell{l}{$(x\!+\!1)^{b\!+\!c\!-\!2}(x\!+\!2)^{a\!-\!1}[x^3 \!+\! (4 \!-\! b \!-\! c \!-\! 2a)x^2 \!+\! (ab \!-\! 3b \!-\! 3c \!-\! 4a \!+\! ac$\\ $-\! 3bc\!+\! 5)x \!-\! 2a \!-\! 2b \!-\! 2c\!+\! ab \!+\! ac \!-\! 6bc \!+\! 4abc \!+\! 2]$;}\\
$I_7=K_a^c\vee (K_b\cup K_{c}^c)$ ($a,c\geq 1$, $b\geq 2$)&\tabincell{l}{$(x\!+\!1)^{b\!-\!1}(x\!+\!2)^{a\!+\!c\!-\!2}[x^3 \!+\! (5 \!-\! b \!-\! 2c \!-\! 2a)x^2 \!+\! (ab \!-\! 4b \!-\! 6c \!-\! 6a \!+\! 3ac$\\ $-\! 2bc \!+\! 8)x \!-\! 4a \!-\! 4b \!-\! 4c\!+\! 2ab \!+\! 3ac \!-\! 4bc \!+\! 3abc \!+\! 4]$;}\\
$J_1=P_4[K_{a}^c,K_{b},K_{c}^c,K_{d}^c]$&\tabincell{l}{$(x\!+\!1)^{b\!-\!1}(x\!+\!2)^{a\!+\!c\!+\!d\!-\!3}[x^4 \!+\! (7 \!-\! b \!-\! 2c \!-\! 2d \!-\! 2a)x^3 \!+\! (ab \!-\! 6b \!-\! 10c$\\ $-\! 10d\!-\! 10a \!-\! 5ad \!+\! bc \!-\! 2bd\!+\! 3cd \!+\! 18)x^2\!+\! (4ab \!-\! 12b \!-\! 16c \!-\! 16d \!-\! 16a$\\ $-\! 15ad\!+\! 4bc \!-\! 8bd \!+\! 9cd \!+\! 3abd \!+\! 8acd \!+\! 3bcd \!+\! 20)x\!-\! 8a \!-\! 8b \!-\! 8c \!-\! 8d$\\ $+\! 4ab\!-\! 10ad\!+\! 4bc \!-\! 8bd \!+\! 6cd \!+\! 6abd \!+\! 8acd \!+\! 6bcd \!-\! 4abcd \!+\! 8]$;}\\
$J_2=P_4[K_{a}^c,K_{b},K_{c},K_{d}^c]$&\tabincell{l}{$(x\!+\!1)^{b\!+\!c\!-\!2}(x\!+\! 2)^{a\!+\!d\!-\!2}[x^4 \!+\! (6 \!-\! b \!-\! c \!-\! 2d \!-\! 2a)x^3 \!+\! (ab \!-\! 5b \!-\! 5c \!-\! 8d$\\ $-\! 8a \!-\! 2ac \!-\! 5ad \!-\! 2bd\!+\! cd \!+\! 13)x^2\!+\! (3ab \!-\! 8b \!-\! 8c \!-\! 10d \!-\! 10a \!-\! 6ac$\\ $ \!-\! 10ad\!-\! 6bd \!+\! 3cd \!+\! abc \!+\! 3abd \!+\! 3acd \!+\! bcd \!+\! 12)x\!-\! 4a \!-\! 4b \!-\! 4c \!-\! 4d$\\ $+\! 2ab\!-\! 4ac \!-\! 5ad \!-\! 4bd \!+\! 2cd \!+\! 2abc \!+\! 3abd \!+\! 3acd \!+\! 2bcd \!-\! abcd \!+\! 4]$;}\\
$J_3=P_4[K_{a}^c,K_{b},K_{c}^c,K_{d}]$&\tabincell{l}{$(x\!+\!1)^{b\!+\!d\!-\!2}(x\!+\!2)^{a\!+\!c\!-\!2}[x^4 \!+\! (6 \!-\! b \!-\! 2c \!-\! d \!-\! 2a)x^3 \!+\! (ab \!-\! 5b \!-\! 8c \!-\! 5d$\\ $-\! 8a \!-\! 7ad \!+\! bc \!-\! 3bd\!+\! cd \!+\! 13)x^2\!+\! (3ab \!-\! 8b \!-\! 10c \!-\! 8d \!-\! 10a \!-\! 21ad$\\ $+\! 3bc \!-\! 12bd \!+\! 3cd \!+\! 4abd \!+\! 8acd \!+\! 4bcd \!+\! 12)x\!-\! 4a \!-\! 4b \!-\! 4c \!-\! 4d$\\ $+\! 2ab \!-\! 14ad \!+\! 2bc \!-\! 12bd \!+\! 2cd \!+\! 8abd \!+\! 8acd \!+\! 8bcd \!-\! 4abcd \!+\! 4]$;}\\
$J_4=P_4[K_{a}^c,K_{b}^c,K_{c},K_{d}]$&\tabincell{l}{$(x\!+\!1)^{c\!+\!d\!-\!2}(x\!+\!2)^{a\!+\!b\!-\!2}[x^4 \!+\! (6 \!-\! 2b \!-\! c \!-\! d \!-\! 2a)x^3 \!+\! (3ab \!-\! 8b \!-\! 5c \!-\! 5d$\\ $-\! 8a \!-\! 2ac \!-\! 7ad \!+\! bc\!-\! 2bd \!+\! 13)x^2\!+\! (6ab \!-\! 10b \!-\! 8c \!-\! 8d \!-\! 10a \!-\! 6ac$\\ $-\! 21ad \!+\! 3bc \!-\! 6bd \!+\! 3abc \!+\! 11abd \!+\! acd\!+\! bcd \!+\! 12)x\!-\! 4a \!-\! 4b \!-\! 4c \!-\! 4d$\\ $+\! 3ab \!-\! 4ac \!-\! 14ad \!+\! 2bc \!-\! 4bd \!+\! 3abc \!+\! 11abd \!+\! 2acd \!+\! 2bcd \!-\! abcd \!+\! 4]$;}\\
$J_5=P_4[K_{a}^c,K_{b},K_{c},K_{d}]$&\tabincell{l}{$(x\!+\!1)^{b\!+\!c\!+\!d\!-\!3}(x\!+\!2)^{a\!-\!1}[x^4 \!+\! (5 \!-\! b \!-\! c \!-\! d \!-\! 2a)x^3 \!+\! (ab \!-\! 4b \!-\! 4c \!-\! 4d$\\ $-\! 6a \!-\! 2ac \!-\! 7ad\!-\! 3bd \!+\! 9)x^2\!+\! (2ab \!-\! 5b \!-\! 5c \!-\! 5d \!-\! 6a \!-\! 4ac\!-\! 14ad$\\ $-\! 9bd \!+\! abc \!+\! 4abd \!+\! acd \!+\! bcd \!+\! 7)x\!-\! 2a \!-\! 2b \!-\! 2c \!-\! 2d\!+\! ab \!-\! 2ac$\\ $-\! 7ad \!-\! 6bd \!+\! abc \!+\! 4abd \!+\! acd \!+\! 2bcd \!+\! 2]$;}\\
$J_6=P_4[K_{a},K_{b},K_{c}^c,K_{d}]$&\tabincell{l}{$(x\!+\!1)^{a\!+\!b\!+\!d\!-\!3}(x\!+\!2)^{c\!-\!1}[x^4 \!+\! (5 \!-\! b \!-\! 2c \!-\! d \!-\! a)x^3 \!+\! (bc \!-\! 4b \!-\! 6c \!-\! 4d$\\ $-\! 2ac \!-\! 8ad \!-\! 4a \!-\! 3bd\!+\! cd \!+\! 9)x^2\!+\! (2bc \!-\! 5b \!-\! 6c \!-\! 5d \!-\! 4ac \!-\! 24ad$\\$ -\! 5a \!-\! 9bd \!+\! 2cd \!+\! abc \!+\! abd \!+\! 9acd \!+\! 4bcd \!+\! 7)x\!-\! 2a \!-\! 2b \!-\! 2c \!-\! 2d$\\ $-\! 2ac \!-\! 16ad \!+\! bc \!-\! 6bd \!+\! cd \!+\! abc \!+\! 2abd \!+\! 9acd \!+\! 4bcd \!+\! 2]$;}\\
$J_7=P_4[K_{a},K_{b},K_{c},K_{d}]$&\tabincell{l}{$(x\!+\!1)^{a\!+\!b\!+\!c\!+\!d\!-\!4}[x^4 \!+\! (4 \!-\! b \!-\! c \!-\! d \!-\! a)x^3 \!+\! (6 \!-\! 3b \!-\! 3c \!-\! 3d\!-\! 3ac$\\ $-\! 8ad\!-\! 3bd \!-\! 3a)x^2\!+\! (abc \!-\! 3b \!-\! 3c \!-\! 3d \!-\! 6ac \!-\! 16ad \!-\! 6bd \!-\! 3a$\\ $+\! abd \!+\! acd \!+\! bcd \!+\! 4)x\!-\! a \!-\! b \!-\! c \!-\! d\!-\! 3ac \!-\! 8ad \!-\! 3bd \!+\! abc$\\ $+\! abd \!+\! acd \!+\! bcd \!+\! abcd \!+\! 1]$.}\\
\hline
\end{tabular}
\label{Table-3}
\end{table}

\begin{pro}\label{Pro-3-2}
The $D$-polynomials of $I_1$--$I_7$ and $J_1$--$J_7$ (see Proposition \ref{Pro-3-1}) are listed in Tab. \ref{Table-3}.
\end{pro}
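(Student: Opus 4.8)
The plan is to exploit the structural fact that every one of the fourteen graphs is a generalized lexicographic product $F[H_1,\dots,H_k]$ of a small connected ``frame'' $F$ on $k$ vertices by cliques and empty graphs $H_i$, where $F$ has at most $3$ vertices for $I_1$--$I_7$ and $F=P_4$ for $J_1$--$J_7$. For such a product the partition $\Pi$ of $V(G)$ into the vertex sets $V_1,\dots,V_k$ of the factors is a distance equitable partition, and the $D$-spectrum then splits cleanly into high-multiplicity eigenvalues $-1,-2$ supplied by Lemma \ref{Lemma-2-7} and the $k$ eigenvalues of the divisor matrix supplied by Lemma \ref{Lemma-2-8}.

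First I would verify that $\Pi$ is distance equitable and record its divisor matrix. For $u\in V_i$ and $v\in V_j$ with $i\neq j$ one has $d(u,v)=d_F(i,j)$, while for distinct $u,v$ in the same $V_i$ the distance is $1$ if $H_i$ is a clique and $2$ if $H_i$ is empty (the value $2$ occurring because the frame vertex $i$ always has a neighbour, giving a common neighbour path of length $2$). Hence $\sum_{u\in V_j}d(v,u)$ depends only on $i$ and $j$, so $\Pi$ is distance equitable with divisor matrix $B_\Pi=(b_{ij})_{k\times k}$ given by $b_{ij}=|V_j|\,d_F(i,j)$ for $i\neq j$ and $b_{ii}=(|V_i|-1)\delta_i$, where $\delta_i=1$ if $H_i$ is a clique and $\delta_i=2$ if $H_i$ is empty. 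For example, $J_7=P_4[K_a,K_b,K_c,K_d]$ gives $B_\Pi=\left[\begin{smallmatrix}a-1&b&2c&3d\\a&b-1&c&2d\\2a&b&c-1&d\\3a&2b&c&d-1\end{smallmatrix}\right]$.

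Next I would read off the spectrum. Since all vertices of a factor $V_i$ share the same neighbourhood outside $V_i$, and $V_i$ is a clique (resp.\ independent set) exactly when $H_i$ is complete (resp.\ empty), Lemma \ref{Lemma-2-7} shows that $-1$ has multiplicity at least $p:=\sum_{\delta_i=1}(|V_i|-1)$ and $-2$ has multiplicity at least $q:=\sum_{\delta_i=2}(|V_i|-1)$, with eigenvectors supported on a single factor and summing to zero there, hence orthogonal to the columns of $\chi_\Pi$. By Lemma \ref{Lemma-2-8}(i) the $k$ eigenvalues of $B_\Pi$ are $D$-eigenvalues of $G$ with eigenvectors in the column space of $\chi_\Pi$, so the two families are orthogonal. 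As $p+q+k=\sum_i(|V_i|-1)+k=n$, every $D$-eigenvalue is accounted for, and therefore
\[
\Phi_G(x)=(x+1)^{p}(x+2)^{q}\det(xI-B_\Pi),
\]
where the exponents $p$ and $q$ obtained in each case match exactly the powers of $(x+1)$ and $(x+2)$ displayed in Tab.\ \ref{Table-3}.

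Finally it remains to evaluate $\det(xI-B_\Pi)$ for each graph by writing out $B_\Pi$ from the recipe above and expanding. The determinants of orders $k\le 3$ arising for $I_1$--$I_7$ are short and immediately reproduce the quadratic and cubic bracketed factors in Tab.\ \ref{Table-3}. The real labour, and the only genuine obstacle, lies in the four order-$4$ determinants for $J_1$--$J_7$: their quartic expansions carry many mixed monomials in $a,b,c,d$, so matching every coefficient against the table requires careful bookkeeping (conveniently checked by a symbolic computation). Since this last step is purely mechanical, the proposition follows once the factorization above is established.
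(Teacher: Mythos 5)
Your proposal is correct and follows essentially the same route as the paper: compute the distance divisor matrix of the equitable partition into factors, invoke Lemma \ref{Lemma-2-8} for the $k$ eigenvalues in the column space of $\chi_\Pi$, invoke Lemma \ref{Lemma-2-7} for the eigenvalues $-1$ and $-2$ with eigenvectors orthogonal to $\chi_\Pi$, and observe that the multiplicities sum to $n$ so the factorization of $\Phi_G$ is complete (the paper carries this out explicitly only for $J_1$). The only quibble is the harmless slip of ``four'' for ``seven'' order-$4$ determinants for $J_1$--$J_7$.
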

\begin{proof}
We only show how to  obtain the  $D$-polynomial of $J_1$. For the remaining graphs, the methods are similar and so we omit the process of computation. 

It is easily seen that $J_1=P_4[K_{a}^c,K_{b},K_{c}^c,K_{d}^c]$ has the distance divisor matrix
$$B_{\Pi}=
\left[\begin{matrix}
2(a-1)&b&2c&3d\\
a&b-1&c&2d\\
2a&b&2(c-1)&d\\
3a&2b&c&2(d-1)\\
\end{matrix} 
\right].
$$
By Lemma \ref{Lemma-2-8}, we have $\Psi_{J_1}(x)=\det(xI-B_\Pi)| \Phi_{J_1}(x)=\det(xI-D(J_1))$, where $\Psi_{J_1}(x)=x^4 \!+\! (7 \!-\! b \!-\! 2c \!-\! 2d \!-\! 2a)x^3 \!+\! (ab \!-\! 6b \!-\! 10c\!-\! 10d\!-\! 10a \!-\! 5ad \!+\! bc \!-\! 2bd\!+\! 3cd \!+\! 18)x^2\!+\! (4ab \!-\! 12b \!-\! 16c \!-\! 16d \!-\! 16a\!-\! 15ad\!+\! 4bc \!-\! 8bd \!+\! 9cd \!+\! 3abd \!+\! 8acd \!+\! 3bcd \!+\! 20)x\!-\! 8a \!-\! 8b \!-\! 8c \!-\! 8d\!+\! 4ab\!-\! 10ad\!+\! 4bc \!-\! 8bd \!+\! 6cd \!+\! 6abd \!+\! 8acd \!+\! 6bcd \!-\! 4abcd \!+\! 8$. Furthermore, from Lemma \ref{Lemma-2-7} we know that $-1$ and $-2$ are $D$-eigenvalues of $J_1$ with multiplicities at least $b-1$ and $a+c+d-3$, respectively. Thus the  $D$-polynomial of $J_1$ is equal to $\Phi_{J_1}(x)=(x+1)^{b-1}(x+2)^{a+c+d-3}\Psi_{J_1}(x)$ since the constructed eigenvectors we use to prove Lemma \ref{Lemma-2-7} are of the second kind according to  Lemma \ref{Lemma-2-8}.
\end{proof}

Combining Propositions \ref{Pro-3-1} and \ref{Pro-3-2}, we now give the main result of this section.
\begin{thm}\label{Thm-3-1}
Let $G$ be a connected graph on $n$ vertices. Then $\partial_3(G)\leq -1$ and $\partial_{n-1}(G)\geq -2$ if and only if 
\begin{enumerate}[(i)]
\vspace{-0.2cm}
\item $d(G)\leq 2$ and $G\in\{I_i\mid 1\leq i\leq 7\}$, where $I_1=K_n$ ($n\geq 4$), $I_2=K_a\vee K_b^c$ ($a\geq 2,b\geq 1$), $I_3=K_a\vee (K_b\cup K_c)$ ($a,b,c\geq 2$), $I_4=K_a\vee (K_b\cup K_c^c)$ ($a,b\geq 2$, $c\geq 1$), $I_5=K_a^c\vee K_{b}^c$ ($a,b\geq 1$), $I_6=K_a^c\vee (K_b\cup K_{c})$ ($a\geq 1$, $b,c\geq 2$) and $I_7=K_a^c\vee (K_b\cup K_{c}^c)$ ($a,c\geq 1$, $b\geq 2$); or
\vspace{-0.2cm}

\item $d(G)=3$ and \begin{enumerate}[1)]
\vspace{-0.2cm}

\item $G=J_1=P_4[K_{a}^c,K_{b},K_{c}^c,K_{d}^c]$, where $b\geq 1$, and $a=c=1$, $d\geq 1$ or $a=1$, $c=2$, $d\leq 2$ or $a=1$, $c\geq 3$, $d=1$ or $a=2$, $c=1$, $d\leq 2$ or $a\geq 3$, $c=1$, $d=1$; or
\vspace{-0.2cm}

\item $G=J_2=P_4[K_{a}^c,K_{b},K_{c},K_{d}^c]$, where $b,c\geq 1$, and $a=1$, $d\geq 1$ or $a=2$, $d\leq 2$ or $a\geq 3$, $d=1$; or
\vspace{-0.2cm}

\item $G=J_3=P_4[K_{a}^c,K_{b},K_{c}^c,K_{d}]$, where $b,d\geq 1$, and $a=1$, $c\geq 1$ or $a\geq 2$, $c=1$; or
\vspace{-0.2cm}

\item $G=J_4=P_4[K_{a}^c,K_{b}^c,K_{c},K_{d}]$, where $c,d\geq 1$, and $a=1$, $b\geq 1$ or $a=2$, $b\leq 2$ or $a\geq 3$, $b=1$; or
\vspace{-0.2cm}

\item $G=J_5=P_4[K_{a}^c,K_{b},K_{c},K_{d}]$, where $a,b,c,d\geq 1$; or
\vspace{-0.2cm}

\item $G=J_6=P_4[K_{a},K_{b},K_{c}^c,K_{d}]$, where $a,b,c,d\geq 1$; or
\vspace{-0.2cm}

\item $G=J_7=P_4[K_{a},K_{b},K_{c},K_{d}]$, where $a + b + c + d - 3ac - 8ad - 3bd - abc - abd - acd - bcd + abcd + 1\leq0$.
\end{enumerate}
\end{enumerate}
\end{thm}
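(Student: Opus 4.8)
The necessity of the structural classification is already contained in Proposition \ref{Pro-3-1}: any $G$ with $\partial_3(G)\le -1$ and $\partial_{n-1}(G)\ge -2$ belongs to $\{I_i\mid 1\le i\le 7\}$ (when $d(G)\le 2$) or to $\{J_i\mid 1\le i\le 7\}$ (when $d(G)=3$). So the plan is to pin down, family by family, exactly which choices of the parameters $a,b,c,d$ realize the two eigenvalue inequalities; the two directions of the ``if and only if'' then come out together. The engine is the factorization $\Phi_G(x)=(x+1)^{\alpha}(x+2)^{\beta}\Psi_G(x)$ supplied by Proposition \ref{Pro-3-2} and Table \ref{Table-3}, where $\deg\Psi_G\in\{2,3,4\}$. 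Writing $r_1\ge r_2\ge\cdots\ge r_m$ ($m=\deg\Psi_G$) for the roots of $\Psi_G$ (all real, since $\Psi_G\mid\Phi_G$), the full $D$-spectrum is $\{r_1,\dots,r_m\}$ together with $-1$ (multiplicity $\alpha$) and $-2$ (multiplicity $\beta$). Because the eigenvalues strictly above $-1$ are exactly the $r_i>-1$ and those strictly below $-2$ are exactly the $r_i<-2$, the conditions become a purely root-counting statement:
\[
\partial_3(G)\le -1\iff \#\{\,i:r_i>-1\,\}\le 2,\qquad \partial_{n-1}(G)\ge -2\iff \#\{\,i:r_i<-2\,\}\le 1 .
\]

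For the families $I_1$--$I_7$ of diameter at most $2$ one has $\deg\Psi_G\le 3$, and Lemma \ref{Lemma-2-1} gives $\partial_n(G)\le -2$. When $\deg\Psi_G=2$ (the families $I_2,I_5$) at most two roots can exceed $-1$ and at most one can fall below $-2$, so both conditions hold automatically. When $\deg\Psi_G=3$ (the families $I_3,I_4,I_6,I_7$) I would use $r_1=\partial_1(G)>0$ together with the signs of $\Psi_G(-1)$ and $\Psi_G(-2)$ to verify $r_3\le -1$ and $r_2\ge -2$ for every admissible parameter; this confirms that all listed $I_i$ meet the conditions, matching the absence of parameter restrictions in part (i).

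For the families $J_1$--$J_7$ of diameter $3$ one has $\deg\Psi_G=4$, and now \emph{two} roots are anchored simultaneously: $r_1=\partial_1(G)>0$, while Lemma \ref{Lemma-2-1} gives $\partial_n(G)\le -d(G)=-3$, so $r_4<-2$ always. Hence exactly one root is already known to sit below $-2$, and the whole pair of inequalities collapses to the single requirement $-2\le r_3\le -1$. I would then read $\Psi_{J_i}(-1)$ and $\Psi_{J_i}(-2)$ off Table \ref{Table-3}. For example $\Psi_{J_7}(-1)=abcd>0$, which, with $r_1>-1>r_4$, forces $\#\{i:r_i>-1\}=2$, i.e.\ $r_3\le -1$ automatically; and $\Psi_{J_7}(-2)=a+b+c+d-3ac-8ad-3bd-abc-abd-acd-bcd+abcd+1$, whose sign decides whether $r_3\ge -2$. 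Demanding $\Psi_{J_7}(-2)\le 0$ reproduces verbatim the inequality of case (ii) 7). Repeating this computation for each $J_i$ and solving the resulting sign conditions over the positive integers $a,b,c,d$ yields the case lists in part (ii): for $J_5,J_6$ the conditions hold for all parameters, while for $J_1$--$J_4$ they split into the stated disjunctions.

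The main obstacle is that a single sign evaluation determines only the \emph{parity} of the number of roots of $\Psi_G$ on one side of a point: from $\Psi_{J_i}(-1)<0$ alone one cannot distinguish one root from three roots exceeding $-1$, and likewise $\Psi_{J_i}(-2)<0$ is consistent with either one or three roots below $-2$. My way around this is precisely the anchoring above — first fixing $r_1>0$ and $r_4<-2$, then using the \emph{explicit}, manifestly signed values of $\Psi_{J_i}(-1)$ (which, as in the $J_7$ case, tend to be monomials or otherwise sign-definite) to locate $r_2$, which in turn resolves the count coming from $\Psi_{J_i}(-2)$. Once the sign conditions are read correctly, the only remaining work is the elementary but lengthy solution of the integer inequalities $\Psi_{J_i}(-1)\ge 0$ and $\Psi_{J_i}(-2)\le 0$ for $a,b,c,d\ge 1$; this casework is exactly what produces the finitely many parameter regimes recorded in parts (ii) 1)--4). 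One must also take care at the borderline values where $-1$ or $-2$ is itself a root of $\Psi_G$, since there the factor $(x+1)^{\alpha}(x+2)^{\beta}$ absorbs a root and the multiplicities shift; the \emph{closed} intervals in the reduction are what guarantee these limiting graphs are classified correctly.
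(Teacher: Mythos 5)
Your overall strategy is the same as the paper's: reduce to the candidate lists via Proposition \ref{Pro-3-1}, then use the factorizations $\Phi_G(x)=(x+1)^{\alpha}(x+2)^{\beta}\Psi_G(x)$ of Proposition \ref{Pro-3-2}, anchor $r_1=\partial_1(G)>0$ and (for the $J_i$) $r_4=\partial_n(G)\le -3$ via Lemma \ref{Lemma-2-1}, and translate the two eigenvalue conditions into $-2\le r_3\le -1$, decided by the signs of $\Psi_G(-1)$ and $\Psi_G(-2)$. Your root-counting reformulation and your treatment of $J_7$ and of the quadratic/cubic $I$-families are exactly what the paper does.

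There is, however, one step that fails as stated. You propose to resolve the parity ambiguity (one versus three roots above $-1$, one versus three roots below $-2$) by ``locating $r_2$'' from the sign of $\Psi_{J_i}(-1)$, on the grounds that these values ``tend to be monomials or otherwise sign-definite.'' That is true for $J_5,J_6,J_7$ (e.g.\ $\Psi_{J_7}(-1)=abcd$), but it is false precisely for $J_1$--$J_4$, where the sign of $\Psi_{J_i}(-1)$ varies with the parameters --- indeed that sign \emph{is} the source of the case lists in (ii) 1)--4); e.g.\ $\Psi_{J_1}(-1)=b(a-1+c-2d+3ad+3cd-4acd)$ is negative for $a=1,c=2,d=3$. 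When $\Psi_{J_i}(-1)<0$, parity alone is consistent with $r_2<-1$ (one root above $-1$), in which case both eigenvalue conditions would still hold and your claimed equivalence ``conditions $\iff \Psi(-1)\ge 0$ and $\Psi(-2)\le 0$'' would break in the ``only if'' direction; the same ambiguity infects the reading of $\Psi_{J_i}(-2)<0$. The missing ingredient is the paper's use of Lemma \ref{Lemma-2-6}: since each $J_i$ is not complete, $\partial_2(G)\ge 1-\sqrt{3}>-1$, and since every eigenvalue different from $-1$ and $-2$ is a root of $\Psi_G$, this forces $r_2=\partial_2(G)>-1$ unconditionally. With that second anchor in place your sign analysis goes through and does yield $r_3\le -1\iff\Psi_G(-1)\ge0$ and $r_3\ge-2\iff\Psi_G(-2)\le0$. (The paper also prunes the $J_1$ casework by interlacing with $P_4[2K_1,K_1,2K_1,K_1]$ to force $a=1$ or $c=1$, but that is only a convenience; your direct solution of the integer inequalities would reach the same lists.)
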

\begin{proof}
According to Proposition \ref{Pro-3-1}, to determine the graphs with $\partial_3(G)\leq -1$ and $\partial_{n-1}(G)\geq -2$, it suffices to identify such graphs from $\{I_i,J_i\mid 1\leq i\leq 7\}$ by using  Proposition \ref{Pro-3-2}. Here we only check the graphs $I_7$, $J_1$ and $J_7$, and the remaining graphs could be checked in a similar way and so the detail is omitted.

First suppose $G=I_7=K_a^c\vee (K_b\cup K_{c}^c)$ ($a,c\geq 1$, $b\geq 2$). Then the $D$-polynomial of $G$ is equal to $\Phi_G(x)=(x+1)^{b-1}(x+2)^{a+c-2}\Psi_G(x)$ (see Tab. \ref{Table-3}), where $\Psi_G(x)=x^3 + (5 - b - 2c - 2a)x^2 + (ab - 4b - 6c - 6a + 3ac- 2bc + 8)x - 4a - 4b - 4c+ 2ab + 3ac - 4bc + 3abc + 4$. Let $\alpha_1>\alpha_2\geq\alpha_3$ be the three zeros of $\Psi_G(x)$. Note that  $\alpha_1=\partial_1(G)>0$ by Lemma \ref{Lemma-2-8}.  Since $\Psi_G(-1)=ab - b - 2bc + 3abc>0$  and $\Psi_G(-2)=3abc - 3ac>0$, we have $\alpha_1>\alpha_2>-1$ and $\alpha_3<-2$, which implies that $\partial_3(G)\leq -1$ and $\partial_{n-1}(G)\geq -2$.

Next suppose $G=J_1=P_4[K_{a}^c,K_{b},K_{c}^c,K_{d}^c]$, where $a,b,c,d\geq 1$. Then the $D$-polynomial of $G$ is $\Phi_G(x)=(x+1)^{b-1}(x+2)^{a+c+d-3}\Psi_G(x)$ (see Tab. \ref{Table-3}), where $\Psi_G(x)=x^4 + (7 - b - 2c - 2d - 2a)x^3 + (ab - 6b - 10c- 10d- 10a - 5ad + bc - 2bd+ 3cd + 18)x^2+ (4ab - 12b - 16c - 16d - 16a- 15ad+ 4bc - 8bd + 9cd + 3abd + 8acd + 3bcd + 20)x- 8a - 8b - 8c - 8d+ 4ab- 10ad+ 4bc - 8bd + 6cd + 6abd + 8acd + 6bcd - 4abcd + 8$. Let $\alpha_1>\alpha_2\geq\alpha_3\geq\alpha_4$ be the four zeros of $\Psi_G(x)$. Note that  $\alpha_1=\partial_1(G)>0$, and  $\alpha_4=\partial_n(G)\leq -3$ by Lemma \ref{Lemma-2-1}.  Also note that $\alpha_2=\partial_2(G)\geq 1-\sqrt{3}>-1$ by Lemma \ref{Lemma-2-6} since $G$ is not compete.  By simple computation, we get $\Psi_G(-2)=- 8acd - 4abcd<0$, which implies that $\alpha_3>-2$ since we have obtained  $\alpha_1>\alpha_2>-1$ and $\alpha_4\leq -3$, and so $\partial_{n-1}(G)\geq -2$. Furthermore, we see that $\partial_{3}(G)\leq -1$ if and only if $\alpha_3\leq -1$, which is the case if and only if $\Psi_G(-1)=ab - b + bc - 2bd + 3abd + 3bcd - 4abcd\geq0$ by above arguments. Now it suffices to determine those $a,b,c,d$ such that $\partial_{3}(G)\leq -1$. If $a,c\geq 2$, then $D(P_4[2K_1,K_1,2K_1,K_1])$ is a principal submatrix of $D(G)$, which implies that $-0.8990=\partial_{3}(P_4[2K_1,K_1,2K_1,K_1])\leq \partial_3(G)$ by Lemma \ref{Lemma-2-4}, a contradiction. Then we can suppose that $a=1$ or $c=1$. If $a=1$, then $\Psi_G(-1)=bc + bd - bcd$, and so $\Psi_G(-1)\geq0$ if and only if $b\geq 1$, and $c=1$, $d\geq 1$ or $c=2$, $d\leq 2$ or $c\geq 3$, $d=1$ by simple compution. Similarly, if $c=1$, then $\Psi_G(-1)=ab + bd - abd\geq 0$ if and only if $b\geq 1$, and $a=1$, $d\geq 1$ or $a=2$, $d\leq 2$ or $a\geq 3$, $d=1$. Combining above results, if $G=J_1=P_4[K_{a}^c,K_{b},K_{c}^c,K_{d}^c]$, then $\partial_3(G)\leq -1$ and $\partial_{n-1}(G)\geq -2$ if and only if $b\geq 1$, and $a=c=1$, $d\geq 1$ or $a=1$, $c=2$, $d\leq 2$ or $a=1$, $c\geq 3$, $d=1$ or $a=2$, $c=1$, $d\leq 2$ or $a\geq 3$, $c=1$, $d=1$.

Finally, we  suppose  $G=J_7=P_4[K_{a},K_{b},K_{c},K_{d}]$, where $a,b,c,d\geq 1$. Then $\Phi_G(x)=(x+1)^{a+b+c+d-4}\Psi_G(x)$, where $\Psi_G(x)=x^4 + (4 - b - c - d - a)x^3 + (6 - 3b - 3c - 3d- 3ac- 8ad- 3bd - 3a)x^2+ (abc - 3b - 3c - 3d - 6ac - 16ad - 6bd - 3a+ abd + acd + bcd + 4)x- a - b - c - d- 3ac - 8ad - 3bd + abc+ abd + acd + bcd + abcd + 1$. Let $\alpha_1>\alpha_2\geq\alpha_3\geq\alpha_4$ be the four zeros of $\Psi_G(x)$. As above, we have $\alpha_1=\partial_1(G)>0$, $\alpha_2=\partial_2(G)>-1$ and $\alpha_4=\partial_n(G)\leq -3$. By simple computation, we have $\Psi_G(-1)=abcd>0$, which implies that $\alpha_3< -1$, and so $\partial_{3}(G)\leq -1$. Moreover, we see that $\partial_{n-1}(G)\geq -2$ if and only if $\alpha_3\geq -2$, which is the case if and only if $\Psi_G(-2)=a + b + c + d - 3ac - 8ad - 3bd - abc - abd - acd - bcd + abcd + 1\leq0$ by above arguments. Therefore, we have $\partial_3(G)\leq -1$ and $\partial_{n-1}(G)\geq -2$ if and only if $a + b + c + d - 3ac - 8ad - 3bd - abc - abd - acd - bcd + abcd + 1\leq0$.
\end{proof}

\begin{remark}\label{rem-1}
\emph{
To investigate whether the graphs with $\partial_3(G)\leq -1$ and $\partial_{n-1}(G)\geq -2$ are DDS, it remains to compare the $D$-polynomials of $I_1$--$I_7$ and $J_1$--$J_7$ according to Theorem \ref{Thm-3-1}. The process of computation is complicated and tedious, so we do not discuss the DDS-property of these graphs in this paper. Indeed, there exist some non-isomorphic $D$-cospectral graphs in this class. For example, one can verify that $J_7^1=P_4[K_1,K_1,K_3,K_9]$ and  $J_7^2=P_4[K_1,K_9,K_1,K_3]$ are a pair of non-isomorphic $D$-cospectral graphs belonging to this class.
}
\end{remark}

\section{Graphs with at most three $D$-eigenvalues different from $-1$ and $-2$}\label{s-4}
For a connected graph  $G$ on $n$ vertices, we denote by $m_G(\partial)$  the multiplicity of $\partial$ as a $D$-eigenvalue of $G$.
 In this section, we focus on characterizing the graphs with at most three $D$-eigenvalues different from $-1$ and $-2$, that is, the graphs with $m_G(-1)+m_G(-2)\geq n-3$, which gives new families of graphs with few distinct $D$-eigenvalues. Clearly, we have $m_G(-1)+m_G(-2)\leq n-1$.  If $m_G(-1)+m_G(-2)=n-1$, then $\partial_2(G)\leq -1<1-\sqrt{3}$, implying that $G$ is the complete graph $K_n$ by Lemma \ref{Lemma-2-6}. Thus it suffices to determine those graphs with $m_G(-1)+m_G(-2)\in\{n-2,n-3\}$.

\begin{thm}\label{Thm-4-1}
Let $G$ a connected graph on $n\geq 4$ vertices. Then $m_G(-1)+m_G(-2)=n-2$ if and only if $G=K_{s,n-s}$ ($1\leq s\leq n-1$) or $G=K_{s}^c\vee K_{n-s}$ ($2\leq s\leq n-2$).
\end{thm}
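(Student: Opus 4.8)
The plan is to reduce the problem to Theorem~\ref{Thm-3-1} (equivalently Proposition~\ref{Pro-3-1}) and then to sift the resulting short list $\{I_1,\dots,I_7,J_1,\dots,J_7\}$ by computing $m_G(-1)+m_G(-2)$ for each member. The bookkeeping device throughout is that, writing $\Phi_G(x)=(x+1)^p(x+2)^q\,\Psi_G(x)$ as in Proposition~\ref{Pro-3-2} with $\Psi_G$ the characteristic polynomial of the distance divisor matrix, the roots of $\Phi_G$ outside $\{-1,-2\}$ are precisely the roots of $\Psi_G$ outside $\{-1,-2\}$, so that $m_G(-1)+m_G(-2)=n-r$, where $r$ is the number of roots of $\Psi_G$ (with multiplicity) different from $-1$ and $-2$. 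Thus the condition $m_G(-1)+m_G(-2)=n-2$ is equivalent to $r=2$.

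For the sufficiency direction I would take $G=K_{s,n-s}=I_5$ (with $\{a,b\}=\{s,n-s\}$) and $G=K_s^c\vee K_{n-s}=I_2$ (independent part $K_s^c$ with $s\ge 2$, complete part $K_{n-s}$ with $n-s\ge 2$), and read off their $D$-polynomials from Table~\ref{Table-3}. In both cases $\Psi_G$ is a quadratic and $p+q=n-2$, so it suffices to check that neither $-1$ nor $-2$ is a root of $\Psi_G$. Substituting gives $\Psi_{I_5}(-1)=3ab-2a-2b+1$ and $\Psi_{I_5}(-2)=3ab$, and $\Psi_{I_2}(-1)=a(b-1)$ and $\Psi_{I_2}(-2)=b(a+2)$, all strictly positive in the stated ranges (for $I_5$ one uses $n\ge 4$, and for $I_2$ the requirement $b=s\ge 2$). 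Hence $r=2$ and $m_G(-1)+m_G(-2)=n-2$.

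For the necessity direction the first step is the observation that the hypothesis already forces the interlacing conditions of Theorem~\ref{Thm-3-1}. Indeed, if $m_G(-1)+m_G(-2)=n-2$ then exactly two eigenvalues of $D(G)$ differ from $-1$ and $-2$; since $G$ is connected, $D(G)$ is irreducible and nonnegative, so $\partial_1(G)>0$ is simple and is one of these two, the other being some $\mu$. Every eigenvalue outside $\{\partial_1,\mu\}$ equals $-1$ or $-2$, so at most two eigenvalues exceed $-1$ (forcing $\partial_3(G)\le -1$) and at most one eigenvalue is below $-2$ (only $\mu$ can be, since $\partial_1>0$, forcing $\partial_{n-1}(G)\ge -2$). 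By Proposition~\ref{Pro-3-1} we then have $G\in\{I_1,\dots,I_7,J_1,\dots,J_7\}$, and it remains to determine $r$ for each. For $I_1=K_n$ the factor $\Psi_G$ is linear with root $n-1$, giving $r=1$. For $I_3,I_4,I_6,I_7$ the factor $\Psi_G$ is a cubic and direct substitution yields $\Psi_G(-1),\Psi_G(-2)>0$ over the full admissible range (for instance $\Psi_{I_3}(-1)=abc$, $\Psi_{I_4}(-2)=c(a+2)(b-1)$, $\Psi_{I_6}(-1)=bc(4a-3)$, $\Psi_{I_7}(-2)=3ac(b-1)$), so all three roots avoid $\{-1,-2\}$ and $r=3$. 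The diameter-three graphs $J_1,\dots,J_7$ I would dispatch uniformly: here $\Psi_G$ is a quartic whose least root is $\partial_n(G)\le -d(G)=-3$ by Lemma~\ref{Lemma-2-1}, whose largest root $\partial_1(G)>0$ is simple, and whose second largest root satisfies $\partial_2(G)\ge 1-\sqrt{3}>-1$ by Lemma~\ref{Lemma-2-6} (no $J_i$ is complete); these are three distinct roots outside $\{-1,-2\}$, so $r\ge 3$. Finally $\Psi_{I_2}(-1)=a(b-1)$ vanishes exactly when $b=1$ (the complete case $K_n$), while for $b\ge 2$ one gets $r=2$; together with $I_5$ these are the only survivors, i.e. $K_s^c\vee K_{n-s}$ ($2\le s\le n-2$) and $K_{s,n-s}$ ($1\le s\le n-1$).

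The main obstacle is bookkeeping rather than conceptual: one must confirm the sign of $\Psi_G(-1)$ and $\Psi_G(-2)$ across the entire parameter range for the four cubic cases $I_3,I_4,I_6,I_7$, taking care that these expressions remain positive even as the parameters grow (the dominant contributions are the degree-three monomials in $a,b,c$). The single genuinely structural point, which keeps the argument short, is the reduction in the necessity direction: recognizing that the multiplicity hypothesis is exactly strong enough to trigger Theorem~\ref{Thm-3-1}, after which the whole diameter-three family is eliminated at once through the diameter bound $\partial_n\le -3$ together with $\partial_2>-1$.
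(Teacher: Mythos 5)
Your proof is correct, but it takes a genuinely different route from the paper's. The paper proves Theorem \ref{Thm-4-1} directly and independently of the classification in Section \ref{s-3}: it splits into cases according to which of $m_G(-1)$, $m_G(-2)$ vanish and where the exceptional eigenvalues sit relative to $-1$ and $-2$, then uses Lemma \ref{Lemma-2-2} ($\partial_n=-2$ iff complete multipartite), Lemma \ref{Lemma-2-6} ($\partial_2<1-\sqrt{3}$ iff complete) and the single forbidden subgraph $K_{2,2,1}=F_6$ to pin down the two families. You instead observe --- correctly --- that $m_G(-1)+m_G(-2)=n-2$ forces $\partial_3(G)\le-1$ and $\partial_{n-1}(G)\ge-2$, invoke Proposition \ref{Pro-3-1} to get the fourteen-member list, and count for each member the roots of the divisor factor $\Psi_G$ lying outside $\{-1,-2\}$. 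I checked your sample evaluations and the ones you left implicit ($\Psi_{I_3}(-2)=abc+3bc-a-b-c-1$, $\Psi_{I_4}(-1)=abc$, $\Psi_{I_6}(-2)=a(4bc-b-c-2)$, $\Psi_{I_7}(-1)=b(a-1+c(3a-2))$); all are positive on the stated parameter ranges, so the sieve does land exactly on $I_2$ with $b\ge 2$ and on $I_5$, which are the two families in the statement. Your uniform disposal of $J_1$--$J_7$ via $\partial_n\le-3$, $\partial_1>0$, $\partial_2>-1$ is clean and in fact shows directly that any connected graph of diameter at least $3$ has at least three eigenvalues outside $\{-1,-2\}$, without even needing Table \ref{Table-3}. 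The trade-off: the paper's argument is short and self-contained, needing none of the heavy machinery of Section \ref{s-3} and identifying the answer structurally (complete multipartite minus an induced $K_{2,2,1}$), whereas yours is mechanical, reuses the hard classification, and is essentially the same sieve the paper deploys later in Subcase 3.3 of Theorem \ref{Thm-4-2}.
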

\begin{proof}
Clearly, $G$ is not a complete graph due to $m_G(-1)<n-1$.  We consider the following three cases.

\vspace{0.2cm}
\noindent\textbf{Case 1.} $m_G(-1)=n-2$ and $m_G(-2)=0$.

By Lemma \ref{Lemma-2-1},  we have $\partial_n(G)\leq -2$  because $d(G)\geq 2$.  This implies that $\partial_2(G)=-1$ because $\partial_1(G)>0$ and $m_G(-1)=n-2>0$, and thus $G$ is a complete graph by Lemma \ref{Lemma-2-6}, which is a contradiction.

\vspace{0.2cm}
\noindent\textbf{Case 2.} $m_G(-1)=0$ and $m_G(-2)=n-2$.

In this situation, we can suppose that  $\mathrm{Spec}_D(G)=\{\alpha,\beta,[-2]^{n-2}\}$ with $\alpha>\beta>-2$ or $\mathrm{Spec}_D(G)=\{\alpha,  [-2]^{n-2},\beta\}$ with $\alpha>-2>\beta$. For the former, we have $\partial_{n}(G)=-2$ and so $G$ is a complete bipartite graph $K_{s,n-s}$ ($1\leq s\leq n-1$) according to Lemma \ref{Lemma-2-2}. Conversely, it is easy to verify that $-1$ is not a $D$-eigenvalue of $K_{s,n-s}$ due to $n\geq 4$. For the later, we have $\partial_{2}(G)=-2<1-\sqrt{3}$, and so $G$ is a complete graph, which is impossible. 

\vspace{0.2cm}
\noindent\textbf{Case 3.} $m_G(-1)\geq 1$, $m_G(-2)\geq 1$ and $m_G(-1)+m_G(-2)=n-2$.

In this situation, the $D$-spectrum of $G$ has three possible forms, i.e., $\mathrm{Spec}_D(G)=\{\alpha,\beta,[-1]^{m_1},[-2]^{m_2}\}$ with $\alpha>\beta>-1$, $\mathrm{Spec}_D(G)=\{\alpha,[-1]^{m_1},\beta,$ $[-2]^{m_2}\}$ with $\alpha>-1>\beta>-2$ or $\mathrm{Spec}_D(G)=\{\alpha,[-1]^{m_1},[-2]^{m_2},\beta\}$ with $\alpha>-1>-2>\beta$, where $m_1=m_G(-1)\geq 1$, $m_2=m_G(-2)\geq 1$ and $m_1+m_2=n-2$. We claim that the last two forms cannot occur since otherwise we have $\partial_2(G)=-1$, which is impossible because $G$ cannot be a complete graph. For the first form, we have $\partial_{n}(G)=-2$, and so $G$ is a complete $(n-m_2)$-partite ($n-m_2\geq 3$) graph according to Lemma \ref{Lemma-2-2}. Moreover, we claim that $G$ cannot contain $K_{2,2,1}=F_6$ (see Fig. \ref{Figure-2}) as its induced subgraph by Lemma \ref{Lemma-3-1} since $\partial_3(G)=-1$. Thus we may conclude that $G=K_{s,1,\ldots,1}=K_{s}^c\vee K_{n-s}$, where $s=m_2+1\in [2,n-2]$ because we have known that $G$ is a complete $(n-m_2)$-partite  graph. Conversely,  as in Proposition \ref{Pro-3-2}, one can easily check that $\mathrm{Spec}_D(K_{s}^c\vee K_{n-s})=\{\alpha,\beta,[-1]^{n-s-1},[-2]^{s-1}\}$, where $\alpha,\beta$ are the two zeros of $x^2 - (n+s-3)x-s^2+sn-2(n-1)$ satisfying  $\alpha>\beta>-1$ due to $2\leq s\leq n-2$.

We complete the proof.
\end{proof}

\begin{thm}\label{Thm-4-2}
Let $G$ be a connected graph with $n\geq 5$ vertices. Then $m_G(-1)+m_G(-2)=n-3$ if and only if $G$ is one of the following graphs: 
$K_a\vee(K_b\cup K_c)$  where $a+b+c\geq 5$ and $b+c\geq 3$; $K_{a,b,c}$ where $a+b+c\geq 5$;  $(K_{a}^c\vee K_b^c)\vee K_{c}$ where $a,b,c\geq 2$; $I_4=K_a\vee (K_b\cup K_c^c)$ where $a,b,c\geq 2$; $I_6=K_a^c\vee (K_b\cup K_{c})$ where $a,b,c\geq 2$; $I_7=K_a^c\vee (K_b\cup K_{c}^c)$ where $a+c\geq 3$ and $b\geq 2$; $J_1=P_4[K_{a}^c,K_{b},K_{c}^c,K_{d}^c]$ where $b\geq 1$ and $a=1$, $c=2$, $d=2$ or $a=2$, $c=1$, $d=2$; $J_2=P_4[K_{a}^c,K_{b},K_{c},K_{d}^c]$ where $b,c\geq 1$ and $a=d=2$; $J_4=P_4[K_{a}^c,K_{b}^c,K_{c},K_{d}]$ where $c,d\geq 1$ and $a=b=2$; $J_7=P_4[K_{a},K_{b},K_{c},K_{d}]$ where $a+b+c+d\geq 5$ and $a + b + c + d - 3ac - 8ad - 3bd - abc - abd - acd - bcd + abcd + 1=0$.
\end{thm}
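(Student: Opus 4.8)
The plan is to combine Theorem \ref{Thm-3-1} with the explicit distance polynomials in Tab. \ref{Table-3}, after first translating the multiplicity hypothesis into the eigenvalue conditions $\partial_3(G)\le-1$, $\partial_{n-1}(G)\ge-2$ in every situation but one. To set up, note that $m_G(-1)+m_G(-2)=n-3<n-1$ forces $G\ne K_n$, so Lemma \ref{Lemma-2-6} gives $\partial_2(G)\ge 1-\sqrt3>-1$ and Lemma \ref{Lemma-2-1} gives $\partial_n(G)\le-2$ (since $d(G)\ge2$). Thus exactly three eigenvalues lie outside $\{-1,-2\}$; two of them are $\partial_1(G)>0$ and $\partial_2(G)>-1$, and I denote the third by $\theta$. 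The whole argument then splits on the sign of $\partial_n(G)+2$.

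In the case $\partial_n(G)<-2$, the third exceptional eigenvalue must be $\theta=\partial_n<-2$, so the only eigenvalues above $-1$ are $\partial_1,\partial_2$ and the only one below $-2$ is $\partial_n$; hence $\partial_3(G)\le-1$ and $\partial_{n-1}(G)\ge-2$. By Theorem \ref{Thm-3-1}, $G$ is one of $I_1$--$I_7$ or $J_1$--$J_7$. For each of these Tab. \ref{Table-3} gives $\Phi_G(x)=(x+1)^{p}(x+2)^{q}\Psi_G(x)$ with $\deg\Psi_G\in\{3,4\}$, and the number of eigenvalues outside $\{-1,-2\}$ equals the number of roots of $\Psi_G$ different from $-1$ and $-2$. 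So I would simply evaluate $\Psi_G(-1)$ and $\Psi_G(-2)$ (several of which already appear in the proof of Theorem \ref{Thm-3-1}): for the cubic families $I_i$ both must be nonzero, and for the quartic families $J_i$ exactly one must vanish. Solving these conditions inside the admissible ranges isolates $I_4,I_6,I_7$, the family $K_a\vee(K_b\cup K_c)$, and the restricted $J_1,J_2,J_4,J_7$; for instance $\Psi_{J_7}(-2)=0$ reproduces the displayed Diophantine equation, while $\Psi_{J_1}(-1)=b(c+d-cd)$ when $a=1$ forces $c=d=2$, matching the stated parameters.

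In the case $\partial_n(G)=-2$, Lemma \ref{Lemma-2-2} forces $G=K_{n_1,\dots,n_k}$ to be complete multipartite with $m_G(-2)=n-k$, so the hypothesis becomes $m_G(-1)=k-3$. Writing the distance quotient matrix of the $k$ parts as $B=\mathrm{diag}(n_i-2)+\mathbf 1\mathbf n^{\top}$, the lower bound from Lemma \ref{Lemma-2-7} together with a one-line matrix-determinant computation shows that $-1$ is an eigenvalue of $B$ with multiplicity exactly $t-1$, where $t$ is the number of singleton parts, and that no eigenvalue of $B$ equals $-2$; hence $m_G(-1)=\max\{t-1,0\}$. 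Imposing $m_G(-1)=k-3=s+t-3$, where $s=k-t$ counts the parts of size $\ge2$, yields $s=2$ when $t\ge1$ and $s=3,\,t=0$ otherwise. These are precisely the complete tripartite graphs $K_{a,b,c}$ with at most one part of size $1$, and the graphs $(K_a^c\vee K_b^c)\vee K_c$ consisting of two large parts and $c=t\ge2$ singletons.

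For the converse I would check, from the factorizations in Tab. \ref{Table-3} and the quotient spectra above, that each listed graph has its cubic or quartic factor contributing exactly three eigenvalues outside $\{-1,-2\}$. I expect the main obstacle to be the bookkeeping in the case $\partial_n<-2$: reading off $p,q$ correctly and evaluating the cubic/quartic $\Psi_G$ at $-1$ and $-2$ for all fourteen families, then solving the resulting equalities and inequalities over the integer parameter ranges, while taking care that the complete-multipartite members (which satisfy $\partial_n=-2$) are handled in the other case and not double-counted. By contrast, once the multiplicity-$(t-1)$ fact for the quotient matrix is in hand, the complete-multipartite enumeration is the clean part of the argument.
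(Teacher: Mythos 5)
Your proposal is correct, and while it leans on the same main engine as the paper (Theorem \ref{Thm-3-1} together with the factorizations in Tab. \ref{Table-3}), its case decomposition is genuinely different. The paper splits on which of $m_G(-1)$, $m_G(-2)$ vanishes and then on the position of the three exceptional eigenvalues in the ordering, handling the complete multipartite solutions through Lemma \ref{Lemma-2-2} plus forbidden induced subgraphs ($K_{2,1,1}$, $K_{2,2,1}$, $K_{2,2,2,1}$) and the family $K_a\vee(K_b\cup K_c)$ through the shortcut $\partial_{n-1}(G)=-1$ and Lemma \ref{Lemma-2-3}; it also needs two separate arguments (Subcases 2.2 and 3.2) to kill spurious spectral orderings. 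Your split on $\partial_n(G)=-2$ versus $\partial_n(G)<-2$ collapses all of that: the observation that $\partial_1,\partial_2$ are always two of the three exceptional eigenvalues makes the reduction to Theorem \ref{Thm-3-1} immediate in the second case, and your quotient-matrix computation $m_G(-1)=\max\{t-1,0\}$ for $K_{n_1,\dots,n_k}$ replaces all the forbidden-subgraph reasoning in the first case (one should just record, as you implicitly do, that every eigenvector orthogonal to the part-characteristic vectors gives eigenvalue $-2$, so $m_G(-1)$ really equals the multiplicity of $-1$ in $B$). Your approach even yields the sharper conclusion that the tripartite solutions are exactly those $K_{a,b,c}$ with at most one singleton part — the graphs $K_{a,1,1}=K_a^c\vee K_2$ have $m_G(-1)+m_G(-2)=n-2$ by Theorem \ref{Thm-4-1}, so the paper's unqualified ``$K_{a,b,c}$ with $a+b+c\ge 5$'' is slightly too generous, and its claim in Subcase 2.1 that $-1$ is never an eigenvalue of $K_{a,b,c}$ fails precisely there. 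Two small points to tighten: for the quartic families the requirement is that $\Psi_G$ have exactly one root in $\{-1,-2\}$ counted with multiplicity, which reduces to ``exactly one of $\Psi_G(-1),\Psi_G(-2)$ vanishes'' only because the localization $\alpha_2>-1$, $\alpha_4\le-3$ from the proof of Theorem \ref{Thm-3-1} leaves $\alpha_3$ as the unique candidate; and the filtering must actually be carried out for all of $J_3,J_5,J_6$ (to confirm they contribute nothing) and for the overlapping degenerate members of the $I$-families, which is the bookkeeping you correctly anticipate.
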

\begin{proof}
Clearly, $G$ is not a complete graph due to $m_G(-1)<n-1$.  We consider the following three cases.

\vspace{0.2cm}
\noindent\textbf{Case 1.} $m_G(-1)=n-3$ and $m_G(-2)=0$.

Since $\partial_n(G)\leq -2$ and $\partial_1(G)>0$, we can suppose that $\mathrm{Spec}_D(G)=\{\alpha,[-1]^{n-3},$ $\beta,\gamma\}$ with $\alpha>-1>\beta\geq\gamma$ or $\mathrm{Spec}_D(G)=\{\alpha,\beta,[-1]^{n-3},\gamma\}$ with $\alpha>\beta>-1>\gamma$. Note that $G$ is not a complete graph. The former case cannot occur, and the later case implies that $\partial_{n-1}(G)=-1$ and so $G=K_a\vee(K_b\cup K_c)$ ($a,b,c\geq 1$ and $a+b+c=n\geq 5$) by Lemma \ref{Lemma-2-3}. Conversely, it is easy to check that $-1$ is a $D$-eigenvalue of  $K_a\vee(K_b\cup K_c)$ with multiplicity $n-3$, and $-2$ is a $D$-eigenvalue of $K_a\vee(K_b \cup K_c)$ if and only if $b=c=1$. Therefore, in this situation, we obtain that $G=K_a\vee(K_b\cup K_c)$, where $a+b+c=n\geq 5$ and $b+c\geq 3$.

\vspace{0.2cm}
\noindent\textbf{Case 2.} $m_G(-1)=0$ and $m_G(-2)=n-3$.

By Lemma \ref{Lemma-2-6}, we see that $-2$ cannot be the second largest $D$-eigenvalue of $G$. Thus it suffices to consider the following two situations.

\vspace{0.2cm}
\textbf{Subcase 2.1.} $\mathrm{Spec}_D(G)=\{\alpha,\beta,\gamma,[-2]^{n-3}\}$, where $\alpha>\beta\geq\gamma>-2$.

Since $\partial_{n}(G)=-2$ with multiplicity $n-3$, from Lemma \ref{Lemma-2-2} we have $G=K_{a,b,c}$, where $a+b+c=n\geq 5$. Also, it is easy to check that $-1$ cannot be a $D$-eigenvalue of $K_{a,b,c}$ because $a+b+c>3$, and so our result follows.

\vspace{0.2cm}
\textbf{Subcase 2.2.} $\mathrm{Spec}_D(G)=\{\alpha,\beta,[-2]^{n-3},\gamma\}$, where $\alpha>\beta>-2>\gamma$.

In this situation, we have $\partial_3(G)=-2$. First we claim that $G$ contains no induced $P_4$. If not, let $P_4=v_1v_2v_3v_4$ be an induced subgraph of $G$.  Then $2\leq d_G(v_1,v_4)\leq 3$. If $d_G(v_1,v_4)=3$, then $D(P_4)$ is a principal submatrix of $D(G)$, and so  $-1.1623=\partial_3(P_4)\leq \partial_3(G)=-2$ by Lemma \ref{Lemma-2-4}, a contradiction. If $d_G(v_1,v_4)=2$, then one of $\{F_1,F_2,F_3\}$ is the induced subgraph of $G$ (see Fig. \ref{Figure-2}), which is impossible because $\partial_3(F_i)>-2$ for $i=1,2,3$. Thus $G$ contains no induced $P_4$, and we can suppose $G=G_1\vee G_2$ by Lemma \ref{Lemma-2-10}, where $G_1$ and $G_2$ are non-null. Moreover, we conclude that both $G_1$ and $G_2$ contain no edges since $G$ contains no induced $K_3$ due to $\partial_3(K_3)=-1>-2=\partial_3(G)$. Then $G$ is a complete bipartite graph, and so $\partial_n(G)=-2$ by Lemma \ref{Lemma-2-2}, which contradicts $\partial_n(G)=\gamma<-2$. Therefore, there are no graphs satisfying $\mathrm{Spec}_D(G)=\{\alpha,\beta,[-2]^{n-3},\gamma\}$, where $\alpha>\beta>-2>\gamma$.

\vspace{0.2cm}
\noindent\textbf{Case 3.} $m_G(-1)\geq 1$, $m_G(-2)\geq 1$ and $m_G(-1)+m_G(-2)=n-3$.

By Lemma \ref{Lemma-2-6}  we know that $\partial_2(G)\neq -1$ because $G$ is not complete. Thus we only need to consider the following three cases.

\vspace{0.2cm}
\textbf{Subcase 3.1.} $\mathrm{Spec}_D(G)=\{\alpha,\beta,\gamma,[-1]^{m_1},[-2]^{m_2}\}$, where $\alpha>\beta\geq\gamma>-1$ and $m_1,m_2\geq 1$.

Since $\partial_{n}(G)=-2$,  from Lemma \ref{Lemma-2-2} we obtain that $G$ is a complete $(n-m_2)$-partite ($n-m_2\geq 4$) graph. Furthermore, we claim that $G$ cannot contain $K_{2,2,2,1}$ as its induced subgraph  since otherwise we have $-0.8730=\partial_4(K_{2,2,2,1})\leq \partial_4(G)=-1$ by Lemma \ref{Lemma-2-5}, which is a contradiction. Thus we may conclude that $G=K_{a,b,1,\ldots,1}=(K_{a}^c\vee K_b^c)\vee K_{c}$, where $a+b=m_2+2\in [3,n-2]$ and $c=n-a-b\in[2,n-3]$ because we have known that $G$ is a complete $(n-m_2)$-partite  graph. By simple computaion, we obtain $\Phi_G(x)=(x+2)^{a+b-2}(x+1)^{c-1}\Psi_G(x)$, where $\Psi_G(x)=x^3 + (5 - 2b - c - 2a)x^2 + (3ab - 6b - 4c - 6a + ac + bc + 8)x - 4a - 4b - 4c + 3ab + 2ac + 2bc - abc + 4$. Let $\alpha_1,\alpha_2,\alpha_3$ be the three zeros of $\Psi_G(x)$. Then $\alpha_1>0$ and $\alpha_2>-1$ because $G$ is not complete. Also note that $\Psi_G(-2)=-3ab-abc<0$ and $\Psi_G(-1)=-(a-1)(b-1)c\leq 0$. Then we have $\alpha_3>-1$ if and only if $a,b\geq 2$. Therefore, in this situation, we obtain that $G=(K_{a}^c\vee K_b^c)\vee K_{c}$, where $a,b,c\geq 2$.

\vspace{0.2cm}
\textbf{Subcase 3.2.} $\mathrm{Spec}_D(G)=\{\alpha,\beta,[-1]^{m_1},\gamma,[-2]^{m_2}\}$, where $\alpha>\beta>-1>\gamma>-2$ and $m_1,m_2\geq 1$.

In this situation, $G$ is a complete $(n-m_2)$-partite ($n-m_2\geq 4$) graph because $\partial_n(G)=-2$ is of multiplicity $m_2$. Also, as in Case 3 of the proof of Theorem \ref{Thm-4-1}, $K_{2,1,1}=F_6$ cannot be the induced subgraph of $G$ due to $\partial_3(G)=-1$. This also implies that $G$ is of the form $G=K_{s,1,\ldots,1}=K_{s}^c\vee K_{n-s}$, where $s=m_2+1\in [2,n-3]$. However, we have known that $\mathrm{Spec}_D(K_{s}^c\vee K_{n-s})=\{\alpha,\beta,[-1]^{n-s-1},[-2]^{s-1}\}$, contrary to $m_1+m_2=n-3$. Thus there are no graphs in this situation.

\vspace{0.2cm}
\textbf{Subcase 3.3.} $\mathrm{Spec}_D(G)=\{\alpha,\beta,[-1]^{m_1},[-2]^{m_2},\gamma\}$, where $\alpha>\beta>-1>-2>\gamma$ and $m_1,m_2\geq 1$.

In this situation, we have $\partial_3(G)=-1$ and $\partial_{n-1}(G)=-2$. Then $G$ is one of the graphs listed in Theorem \ref{Thm-3-1}. Therefore, it suffices to select from Theorem \ref{Thm-3-1} those graphs whose $D$-spectrum is of the from $\mathrm{Spec}_D(G)=\{\alpha,\beta,[-1]^{m_1},[-2]^{m_2},\gamma\}$, where $\alpha>\beta>-1>-2>\gamma$ and $m_1,m_2\geq 1$. With the help of Proposition \ref{Pro-3-2}, one can easily check that all the required graphs are: $I_4=K_a\vee (K_b\cup K_c^c)$ with $a,b,c\geq 2$; $I_6=K_a^c\vee (K_b\cup K_{c})$ with $a,b,c\geq 2$; $I_7=K_a^c\vee (K_b\cup K_{c}^c)$ with $a+c\geq 3$ and $b\geq 2$; $J_1=P_4[K_{a}^c,K_{b},K_{c}^c,K_{d}^c]$ with $b\geq 1$ and $a=1$, $c=2$, $d=2$ or $a=2$, $c=1$, $d=2$; $J_2=P_4[K_{a}^c,K_{b},K_{c},K_{d}^c]$ with $b,c\geq 1$ and $a=d=2$; $J_4=P_4[K_{a}^c,K_{b}^c,K_{c},K_{d}]$ with $c,d\geq 1$ and $a=b=2$; $J_7=P_4[K_{a},K_{b},K_{c},K_{d}]$ with $a+b+c+d\geq 5$ and $a + b + c + d - 3ac - 8ad - 3bd - abc - abd - acd - bcd + abcd + 1=0$.

We complete the proof.
\end{proof}

\end{document}